\let\oldsqrt\sqrt
\def\sqrt{\mathpalette\DHLhksqrt}
\def\DHLhksqrt#1#2{%
\setbox0=\hbox{$#1\oldsqrt{#2\,}$}\dimen0=\ht0
\advance\dimen0-0.2\ht0
\setbox2=\hbox{\vrule height\ht0 depth -\dimen0}%
{\box0\lower0.4pt\box2}}
\newcommand{\be}{\begin{equation}}
\newcommand{\ee}{\end{equation}}
\newcommand{\R}{\mathbb{R}} 
\newcommand{\N}{\mathbb{N}} 
\newcommand{\supp}{\textnormal{supp}} 
\renewcommand{\div}{\textnormal{div}}
\newcommand{\n}{\nabla}
\newcommand{\D}{\Delta}
\newcommand{\Ds}{(-\Delta)^s}
\newcommand{\Rp}{\mathbb{R}^N_+}
\newcommand{\cF}{{\mathcal F}}
\newcommand{\cG}{{G}}
\newcommand{\cL}{{\mathcal L}}
\newcommand{\eps}{\varepsilon}
\theoremstyle{definition}
\newtheorem{defi}{Definition}[section]
\newtheorem{remark}[defi]{Remark}
\theoremstyle{plain} 
\newtheorem{thm}[defi]{Theorem}
\newtheorem{prop}[defi]{Proposition}
\newtheorem{lemma}[defi]{Lemma}
\newtheorem{cor}[defi]{Corollary}
\theoremstyle{definition}
\numberwithin{equation}{section}
\date{\today}
\title{Positive powers of the Laplacian in the half-space under Dirichlet boundary conditions}
\author{N. Abatangelo}
\address{N. Abatangelo {\rm D\'{e}partement de math\'{e}matique, Universit\'{e} Libre de Bruxelles CP 214, Boulevard du Triomphe, 1050 Ixelles, Belgium.}}
\email{nicola.abatangelo@ulb.ac.be}
\author{S. Dipierro}
\address{S. Dipierro {\rm Dipartimento di Matematica, Universit\`a degli Studi di Milano, via Saldini 50, 20133 Milano, Italy, and School of Mathematics and Statistics, University of Western Australia, 35 Stirling Hwy, Crawley WA 6009, Australia.}}
\email{serena.dipierro@unimi.it}
\author{M.M. Fall}
\address{M.M. Fall {\rm African Institute for Mathematical Sciences (A.I.M.S) of Senegal, KM 2, Route de Joal (Centre I.R.D. Mbour), B.P. 1418 Mbour, S\'en\'egal.}}
\email{mouhamed.m.fall@aims-senegal.org}
\author{S. Jarohs}
\address{S. Jarohs {\rm Institut f\"ur Mathematik, Goethe-Universit\"at, Robert-Mayer-Stra\ss e 10, 60054 Frankfurt, Germany.}}
\email{jarohs@math.uni-frankfurt.de}
\author{A. Salda\~na}
\address{A. Salda\~na {\rm Institut f\"ur Analysis, Karlsruher Institut f\"ur Technologie, Englerstra\ss e 2, 76131 Karlsruhe, Germany.}}
\email{alberto.saldana@partner.kit.edu}
\begin{document}

\let\thefootnote\relax\footnote{\textit{Keywords}: Fractional Laplacian, Green function, Poisson kernel, $s$-harmonicity, Kelvin transform, nonlocal operator.}
\let\thefootnote\relax\footnote{\textit{MSC2010}: 35C05, 35C15, 35S15, 35B50.}

\begin{abstract}
	We present explicit formulas for solutions to nonhomogeneous boundary value problems involving any positive power of the Laplacian in the half-space. For non-integer powers the operator becomes nonlocal and this requires a suitable extension of Dirichlet-type boundary conditions.  A key ingredient in our proofs is a point inversion transformation which preserves harmonicity and allows us to use known results for the ball.  We include uniqueness statements, regularity estimates, and describe the growth or decay of solutions at infinity and at the boundary.
\end{abstract}
\maketitle

\section{Introduction}

In this paper we study explicit formulas for solutions to nonhomogeneous boundary value problems for any positive power $s>0$ of the Laplacian $(-\Delta)^s$ in the half-space $\R^N_+:=\{x\in\R^N\::\: x_1>0\}$.
Such explicit formulas play a prominent role in Liouville-type theorems, scaling arguments, and in the study of qualitative properties (monotonicity, symmetry, \emph{etc.}) of solutions to nonlinear equations, see \cite{RW09,FW16, GGS10}.

In our results, we consider a consistent extension of Dirichlet boundary conditions to the higher-order fractional setting given by
\begin{align}\label{trace}
	D^{k+\sigma-1}u(z):=\frac{1}{k!}{\lim_{x_1\to 0^+}}\partial_1^{k}[x_1^{1-\sigma}u(x)]\qquad \text{ for }z\in \partial \Rp,
\end{align}
where $\sigma\in(0,1]$, $k\in\N$, and $x=(x_1,z)\in \Rp$.  Note that for $\sigma=1$, the traces \eqref{trace} reduce to the usual (inward) normal derivatives associated to  Dirichlet boundary conditions for the polyharmonic operator.

For $N\in\N$, $\sigma\in(0,1],$ $m\in\N_0$, $s=m+\sigma$, consider the problem 
\begin{equation}\label{eq:intro:main}
	\left\{\begin{aligned}
		(-\Delta)^su&=f &&\text{ in }\Rp,\\
		u&=g&& \text{ on }\R^N\backslash \Rp,\\
		D^{k+\sigma-1}u &= h_k&&\text{ on }\partial \Rp\ \text{ for $k\in\{0,\ldots,m\}$.}\end{aligned}\right. 
\end{equation}
If $\sigma=1$ then $(-\Delta)^s=(-\Delta)^{m+1}$ is the usual iterated Laplacian, that is,
\begin{align*}
	(-\Delta)^{m+1}u(x)=\Big(-\sum_{i=1}^{N}\partial_{i}^2\Big)^{m+1}u(x)\qquad \text{ for }x\in \R^N,
\end{align*}
and, if $\sigma\in(0,1)$, $(-\Delta)^s=(-\Delta)^{m+\sigma}$ becomes a nonlocal operator and can be evaluated pointwisely using finite differences
\begin{align}\label{Ds:def}
	(-\Delta)^s u(x):=\frac{c_{N,s}}{2}\int_{\R^N} \frac{\sum_{k=-{m-1}}^{m+1} (-1)^k { \binom{2m+2}{m+1-k}} u(x+ky)}{|y|^{N+2s}} \ dy\qquad \text{ for }x\in \R^N,
\end{align}
where $c_{N,s}$ is a positive normalization constant whose precise value can be found in \eqref{cNms:def} below. The integral \eqref{Ds:def} is finite whenever $u$ is locally $C^{2s+\alpha}$ and belongs to $\cL^1_s$ (see the notation section below for definitions), but it cannot be computed explicitly in general and this is one of the main difficulties in the nonlocal setting.  For more information on the pointwise evaluation \eqref{Ds:def}, see \cite{AJS17b}.

Since $(-\Delta)^s$ is a nonlocal operator for noninteger $s$, in this case one can prescribe data also on the complement of the half-space. 
In the following we use $(x_1)^{\alpha}_+$ to denote the function $0$ if $x_1\leq 0$ (also if $\alpha<0$) and $x_1^{\alpha}$ if $x_1>0$.  Our first result shows that the \textit{nonlocal Poisson kernel for the half-space} is given by
\begin{equation}\label{s-poisson}
	\Gamma_s(x,y):=(-1)^m\gamma_{N,\sigma}\frac{(x_1)_+^s}{(-y_1)^s|x-y|^N}\qquad \text{for $x\in\R^N$ and $y\in \R^N\backslash \overline{\Rp}$},
\end{equation}
where  $\gamma_{N,\sigma}$ is a positive normalization constants, see \eqref{constants} below.  Observe that the kernel $\Gamma_s$ alternates sign depending on the parity of $m$.  The case $m=0$, \emph{i.e.,} $s\in(0,1)$, is remarked in \cite[equation (3.40)]{B99}. 

\begin{thm}\label{thm:Poisson}
	Let $s\in(0,\infty)\backslash \N$, $r>0$, $g\in L^1(\R^N)$ such that $g$ has compact support in $\R^N\backslash \overline{\Rp}$ and let $u:\R^N\to \R$ be given by
	\begin{align}\label{cHdef}
		u(x)
		:=\int_{\R^N\setminus\Rp}\Gamma_s(x,y) g(y)\ dy+g(x)\chi_{\R^N\setminus\overline{\Rp}}(x)\qquad \text{ for }x\in\R^N.
	\end{align}
	Then $u\in C^{\infty}(\Rp)\cap C^{s}_0(\Rp)\cap \cL^1_s$ is a solution of 
	\begin{align}\label{eq0}
		(-\Delta)^s u=0\quad \text{ in }\Rp,\qquad u=g\quad \text{ on }\R^N\backslash\Rp,
	\end{align}
	and there is some $C(N,g,s)=C>0$ such that
	\begin{align}\label{bound}
		|u(x)|\leq C\frac{x_1^s}{1+|x|^N} \qquad \text{ for }x\in\Rp.
	\end{align}
	Moreover, $u$ is the unique solution of \eqref{eq0} satisfying \eqref{bound}.  If, in addition, $g\neq 0$ is nonnegative then there is $C>0$ depending on $g$, $N$, and $s$ such that
	\begin{align}\label{cor:bound}
		C^{-1}\frac{x_1^s}{1+|x|^N}\leq (-1)^mu(x)\leq C\frac{x_1^s}{1+|x|^N} \qquad \text{ for }x\in\Rp.
	\end{align}
\end{thm}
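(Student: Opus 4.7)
My plan is to obtain \eqref{s-poisson} by transporting the known Poisson kernel for $(-\Delta)^s$ on the unit ball to $\Rp$ via a point inversion, as suggested by the strategy announced in the abstract. Fix $p=-e_1$ and let $\Phi(x)=p+2(x-p)/|x-p|^2$: this involution maps $\Rp$ conformally onto $B_1$ and $\partial \Rp$ onto $\partial B_1\setminus\{p\}$, while the associated Kelvin transform $u\mapsto |x-p|^{2s-N} u(\Phi(x))$ preserves $s$-harmonicity. Under this change of variables, the identities $1-|\Phi(x)|^2=2x_1/|x-p|^2$, $|\Phi(y)|^2-1=-2y_1/|y-p|^2$, and $|\Phi(x)-\Phi(y)|=2|x-y|/(|x-p|\,|y-p|)$ turn the ball Poisson kernel $(-1)^m c_{N,\sigma}(1-|\xi|^2)^s(|\eta|^2-1)^{-s}|\xi-\eta|^{-N}$ into precisely the expression $\Gamma_s(x,y)$ from \eqref{s-poisson}, thus identifying both the formula and the constant $\gamma_{N,\sigma}$, and transferring for free the $s$-harmonicity of $\Gamma_s(\cdot,y)$ in $\Rp$ for each fixed $y\in\R^N\setminus\overline{\Rp}$.

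With $\Gamma_s(\cdot,y)$ known to be $s$-harmonic, the remaining properties of $u$ in \eqref{cHdef} can be checked directly. The identity $u=g$ on $\R^N\setminus\Rp$ is immediate from the cutoff term, since the integral piece vanishes for $x_1\leq 0$ because of the factor $(x_1)_+^s$ in $\Gamma_s$. Smoothness $u\in C^\infty(\Rp)$ and the PDE $(-\Delta)^s u=0$ follow by differentiating under the integral: $g$ has compact support in $\R^N\setminus\overline{\Rp}$ and, uniformly on that support, $\Gamma_s(x,y)$ is smooth in $x\in\Rp$. The upper bound \eqref{bound} is obtained by estimating the kernel: for $y\in\supp g$ one has $(-y_1)^{-s}\leq C$ and $|x-y|\geq c(1+|x|)$ for $|x|$ large, whence $|\Gamma_s(x,y)|\leq C x_1^s/(1+|x|^N)$. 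The bound in turn gives $u\in C^s_0(\Rp)\cap \cL^1_s$ by dominated convergence. For the matching lower bound \eqref{cor:bound} in the non-negative case, pick a small ball $B\subset\R^N\setminus\overline{\Rp}$ where $g\geq c_1>0$ on a positive-measure subset; there $(-y_1)^{-s}\geq c_2$ and $|x-y|\leq c_3(1+|x|)$, so $(-1)^m u(x)\geq c\,x_1^s/(1+|x|^N)$ after integration.

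The main obstacle is uniqueness. Given two solutions of \eqref{eq0} both satisfying \eqref{bound}, their difference $w$ is $s$-harmonic in $\Rp$, vanishes on $\R^N\setminus\Rp$, and decays as $x_1^s/(1+|x|^N)$. The Kelvin-transformed function $\widetilde w:=|\cdot-p|^{2s-N}(w\circ\Phi)$ is then $s$-harmonic in $B_1$, vanishes outside $B_1$, and remains bounded near the preimage $p$ of infinity. A uniqueness statement for the Dirichlet problem of $(-\Delta)^s$ on $B_1$ then forces $\widetilde w\equiv 0$ and hence $w\equiv 0$. The delicate point, especially for $m\geq 1$, is that the exterior vanishing of $\widetilde w$ alone does not determine it: one must moreover verify that the decay \eqref{bound} at infinity is sharp enough to wipe out the $m+1$ additional boundary traces $D^{k+\sigma-1}\widetilde w(p)$ for $k=0,\ldots,m$ that the nonlocal Dirichlet problem requires. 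Conversely, this constraint is precisely what pins down the specific decay rate $x_1^s/(1+|x|^N)$ appearing in \eqref{bound}, and it is here that the inversion technique really becomes indispensable.
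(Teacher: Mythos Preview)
Your overall strategy---reduce to the ball via the point inversion $\Phi$---is exactly the paper's approach. The direct estimates you give for \eqref{bound} and \eqref{cor:bound} are cleaner than the paper's (which pulls the bounds back from the ball via $K_s$), and your uniqueness sketch via Kelvin-transforming the difference $w$ to the ball is precisely what Lemma~\ref{lem:bvpN} does.

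There is, however, a genuine gap in your argument for $(-\Delta)^s u=0$. The claim that ``$\Gamma_s(\cdot,y)$ is $s$-harmonic in $\Rp$ for each fixed $y$'' is \emph{false}: since $\Gamma_s(x,y)=0$ for $x_1\le 0$ (by the factor $(x_1)_+^s$), the function $\Gamma_s(\cdot,y)$ satisfies homogeneous exterior data and obeys the bound \eqref{bound}, so if it were $s$-harmonic your own uniqueness argument would force it to vanish identically. Equivalently, for $\varphi\in C^\infty_c(\Rp)$ one has $\int_{\R^N}\Gamma_s(x,y)(-\Delta)^s\varphi(x)\,dx=-(-\Delta)^s\varphi(y)\neq 0$. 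The point is that the nonlocal operator applied to $u$ at $x\in\Rp$ sees the exterior term $g\,\chi_{\R^N\setminus\overline{\Rp}}$, and it is precisely this contribution that cancels the defect of the kernel; ``differentiating under the integral'' over $y$ alone cannot produce the equation.

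The paper handles this by transforming the \emph{entire} function rather than the kernel: it sets $v$ equal to the Poisson extension of $K_sg$ on the ball (so $v$ carries its own exterior data $K_sg$ on $\R^N\setminus B$), invokes the ball result \cite[Theorem~1.1]{AJS17a} to get $v\in C^\infty(B)\cap C^s_0(B)\cap\cL^1_s$ with $(-\Delta)^s v=0$ in $B$, and then checks by direct computation that $K_sv=u$. Proposition~\ref{lem:sharm} then gives $s$-harmonicity of $u$. You can repair your argument the same way, or alternatively carry out the Fubini computation above and observe that the two nonzero pieces cancel after integrating against $g$.
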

In particular, $u$ given by \eqref{cHdef} is a solution of  \eqref{eq:intro:main} with $f=h_k=0$ for all $k\in\{0,\ldots,m\}$. Note also that, by \eqref{cor:bound}, $u$ can be negative even if the data of the problem is nonnegative.  The assumption that $g=0$ in $\{x\in \R^N\;:\; x_1>-r\}$ is needed to guarantee integrability in \eqref{cHdef}; this assumption can be weakened, but data which is nonzero at $\partial \Rp$ necessarily requires a different kernel, see for example \cite[Theorem 1.6]{AJS17a} for the ball case.

Our next result concerns the Green function for $(-\Delta)^s$ in $\Rp$, which is defined by 
\begin{align}\label{greenhs0}
	\cG_s(x,y):=k_{N,s} |x-y|^{2s-N}\int_0^{\psi(x,y)}\frac{v^{s-1}}{(v+1)^\frac{N}{2}}\ dv,\qquad \psi(x,y):=\frac{4x_1^+y_1^+}{|x-y|^2}
\end{align}
for $x,y\in\R^N$, $x\neq y,$ where $k_{N,s}$ is a positive normalization constants, see \eqref{constants} below.

\begin{thm}\label{dist:sol:l}
	Let $f\in C^\beta_c(\Rp)$, $\beta>0$ such that $2s+\beta\notin\N$, and $u:\R^N\to \R$ be given by
	\begin{align*}
		u(x):=\int_{\Rp} \cG_s(x,y)f(y)\ dy\qquad \text{ for }x\in\R^N.
	\end{align*}
	Then $u\in C^{2s+\beta}(\Rp)$ is a solution of
	\begin{align}\label{eq}
		(-\Delta)^s u=f\quad \text{ in }\Rp,\qquad u=0\quad \text{ on }\R^N\backslash \Rp,
	\end{align}
	and there is some $C(N,f,s)=C>0$ such that
	\begin{align}\label{es}
		|u(x)|\leq C \frac{(x_1)_+^s}{1+|x|^{N}}\qquad \text{for all }x\in\R^N.
	\end{align}
	Moreover, $u$ is the unique solution of \eqref{eq} satisfying \eqref{es}.  
\end{thm}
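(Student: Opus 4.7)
The strategy rests on the point-inversion (Kelvin-type) transformation advertised in the introduction: the Green function $\cG_s$ in \eqref{greenhs0} should be the image, under this transformation, of the already-known Green function for $(-\Delta)^s$ on the unit ball. The first step is therefore to verify directly from \eqref{greenhs0} that $\cG_s(x,y)=0$ whenever $x\in\R^N\setminus\Rp$ (immediate, since $x_1^+=0$ forces $\psi(x,y)=0$) and that $\cG_s(\cdot,y)$ is a distributional fundamental solution, $(-\Delta)^s_x \cG_s(\cdot,y) = \delta_y$ in $\Rp$. Both facts should descend from the corresponding statements on the ball after tracking how $(-\Delta)^s$ transforms under the inversion; the nonlocal analogue of conformal invariance for $(-\Delta)^s$, already developed for the ball in earlier work of (some of) the authors, is what makes the transfer clean.

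With $\cG_s$ established as the Green function, I would next study $u(x):=\int_{\Rp}\cG_s(x,y)f(y)\,dy$. Its vanishing on $\R^N\setminus\Rp$ is inherited from that of $\cG_s$. To obtain $(-\Delta)^s u=f$ in $\Rp$, one tests against $\varphi\in C^\infty_c(\Rp)$, applies Fubini, and uses the fundamental solution property of $\cG_s(\cdot,y)$; the compact support of $f$ and local integrability of $\cG_s$ justify the interchange. The regularity $u\in C^{2s+\beta}(\Rp)$ then follows from interior Schauder-type estimates for $(-\Delta)^s$ applied to the source $f\in C^\beta_c(\Rp)$, with $2s+\beta\notin\N$ keeping us inside the usual H\"older scale.

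The bound \eqref{es} comes from pointwise asymptotics of $\cG_s(x,y)$ for $y$ in the compact set $\supp f$. Whenever $\psi(x,y)$ is small---which is the case when $x_1$ is small and $x$ lies in a bounded set, as well as when $|x|$ is large---one has $\int_0^{\psi} v^{s-1}(v+1)^{-N/2}\,dv = \psi^s/s + O(\psi^{s+1})$, so $\cG_s(x,y) \leq C\,|x-y|^{2s-N}\psi(x,y)^s = C\,(4x_1 y_1)^s\,|x-y|^{-N}$. For $|x|\to\infty$, the uniform equivalence $|x-y|\asymp|x|$ on $\supp f$ gives $|u(x)|\leq C\,x_1^s/|x|^N$; for $x$ in a bounded set with $x_1$ small, the factor $|x-y|^{-N}$ remains integrable over $\supp f$ (against the bounded weight $y_1^s$), yielding $|u(x)|\leq C\,x_1^s$. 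Combining the two regimes produces \eqref{es}.

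Uniqueness is the easiest step: if $v$ is another solution of \eqref{eq} satisfying \eqref{es}, then $w:=u-v$ is $s$-harmonic in $\Rp$, vanishes on $\R^N\setminus\Rp$, and satisfies $|w(x)|\leq C\,x_1^s/(1+|x|^N)$, so $w$ solves \eqref{eq0} with $g\equiv 0$ and obeys \eqref{bound}; the uniqueness clause of Theorem \ref{thm:Poisson} then forces $w\equiv 0$. The main obstacle I anticipate is the first step: transferring the Green function identity from the ball via the point inversion is delicate when $s>1$, because the finite-difference representation \eqref{Ds:def} does not transform transparently under conformal maps, and one must also confirm that the explicit formula \eqref{greenhs0} matches exactly what the transformation actually produces.
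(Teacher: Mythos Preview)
Your strategy matches the paper's closely: the paper first establishes (Proposition \ref{kelvin-green} and Corollary \ref{green-half}) that $\cG_s$ is the Kelvin image of the ball Green function, then uses Fubini for the distributional identity, and Lemma \ref{lem:bvpN} (the same Kelvin-to-ball Liouville argument underlying the uniqueness clause of Theorem \ref{thm:Poisson}) for uniqueness.

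The two substantive differences are in the regularity step and the decay estimate. For $u\in C^{2s+\beta}(\Rp)$ the paper does not appeal to abstract interior Schauder theory for higher-order fractional Laplacians; instead it rewrites $u(x)$ via the Kelvin transform as $2^{2s}|\kappa x+e_1|^{N-2s}\int_B \cG_B(\kappa x,y)\,\tilde f(y)\,dy$ with $\tilde f\in C^\beta_c(B)$, and then invokes the known $C^{2s+\beta}$ regularity for the ball Green operator from \cite{AJS16b}. This is more self-contained and is where the hypothesis $2s+\beta\notin\N$ enters explicitly. For the bound \eqref{es}, the paper quotes the two-sided estimates \eqref{Green:est} (themselves transferred from the ball via Kelvin) rather than expanding $\int_0^\psi v^{s-1}(v+1)^{-N/2}\,dv$ for small $\psi$; your expansion yields the same upper bound in the regimes you name, but you omit the middle region where $x$ is bounded with $x_1$ bounded away from zero and possibly lies in $\supp f$---there your small-$\psi$ bound fails (and $|x-y|^{-N}$ is not integrable near $x$). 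The paper covers this simply by observing that $u$ is continuous, hence bounded, on that compact set.
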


That \eqref{greenhs0} is the Green function for $(-\Delta)^s$ in $\Rp$ was known for $s\in(0,1)\cup\N$, see \cite[equation (3.1)]{FW16} and \cite[Remark 2.28]{GGS10}.  Note that $G_s$ is a positive kernel, and therefore the half-space enjoys a positivity preserving property. For more information on maximum principles for $(-\Delta)^s$, see \cite{AJS17a,AJS16a}. We also point out that the Green function is \emph{not} uniquely determined, since one can always add suitable harmonic functions (see Proposition \ref{s-harmonic2} below); however, \eqref{greenhs0} is often referred to as \emph{the} Green function for the half-space (because of its relationship with the ball's Green function via the Kelvin transform), and in the following we use this convention as well.

Finally, we introduce the \textit{boundary Poisson kernels} for the half-space given by
\begin{align}\label{edenhofer}
	E_{k,s}(x,y)&:=\sum_{i = 0}^{\lfloor\frac{m-k}{2}\rfloor}\alpha_{m-k,i}\frac{(x_1)_+^{s+m-k-2i}}{|y-x|^{N+2(m-k-i)}},\qquad \text{for $k\in\{0,\ldots,m\}$, $y\in \partial\Rp$, $x\in \R^N\backslash\{y\}$,}
\end{align}
where $\alpha_{l,k}$ are normalization constants, see \eqref{constants} below, and $\lfloor a \rfloor$ denotes the integer part of $a$. For $s\in\N$, the kernels \eqref{edenhofer} were introduced in \cite{E77} using the equivalent expression
\begin{align}\label{Eden:form}
	E_{k,s}(x,y)=\alpha_{0,0}\, x_1^s\, D^{m-k}\zeta_x(y),\qquad \zeta_x(z):=|z-x|^{-N},\quad z\in\R^N\backslash\{x\}.
\end{align}
In \cite{E77} a representation formula for a restricted set of functions is shown using \eqref{Eden:form} and it is also stated the conjecture that these kernels provide pointwise solutions for nonhomogeneous Dirichlet polyharmonic problems (under some smoothness and growth assumptions on the data, see \cite[Satz 3]{E77}).  Our next theorem is new even in the local case and shows, in particular, that the conjecture in \cite[Satz 3]{E77} is true (at least) for compactly supported data. In the following $x'=0$ if $N=1$.

\begin{thm}
	\label{exp:sol:thm} 
	Let $m\in\N_0$, $\beta,\sigma\in(0,1]$, $s=m+\sigma$, $\eps>0$, $k\in \{0,\ldots,m\}$, $h_k\in C_c^{m+\beta}(\partial\Rp)$, and $u:\R^N\to\R$ be given by
	\begin{align}\label{u:eden}
		u(x)=\sum_{k=0}^m\int_{\partial\Rp}E_{k,s}(x,y) h_k(y)\ dy \qquad \text{ for }x\in\R^N.
	\end{align}
	Then $u\in C^{\infty}(\Rp)$ is a solution of
	\begin{align}\label{eq1}
		(-\Delta)^s u=0\quad \text{ in }\Rp,\qquad u=0\quad \text{ on }\R^N\backslash \Rp, \qquad D^{k+\sigma-1}u = h_k\text{ on }\partial \Rp
	\end{align}
	for $k\in\{0,\ldots,m\}$ and there is $C(N,h_1,\ldots,h_m,s)=C>0$ such that
	\begin{align}\label{es1}
		\begin{aligned}
			|\partial_1^m(x_1^{1-\sigma} u(x_1,x'))-D^{s-1}u(0,x')| &\leq C\frac{x_1^\beta}{1+|x'|^N}\quad \text{ for $x'\in\R^{N-1}$ and $x_1\in(0,1)$},\\
			|u(x)|&\leq C\frac{x_1^{s}}{1+|x|^{N}} \qquad \text{ for $x\in \R^N$ with $x_1\geq 1$,}
		\end{aligned}
	\end{align}
	Moreover, $u$ is the unique function in $C^{2s+\beta}(\Rp)$ satisfying \eqref{eq1} and \eqref{es1}.
\end{thm}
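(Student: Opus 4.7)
My strategy proceeds in three stages: smoothness and $s$-harmonicity follow almost directly from material already established; the boundary conditions $D^{j+\sigma-1}u=h_j$ require a careful local computation in tubular coordinates near $\partial\Rp$; and uniqueness appeals to Theorem~\ref{thm:Poisson}.

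First, since each $h_k$ has compact support and $E_{k,s}(x,y)$ is $C^\infty$ in $x$ for $x$ in any compact subset of $\Rp$ and $y\in\supp h_k$, differentiation under the integral gives $u\in C^\infty(\Rp)$; and every summand in \eqref{edenhofer} carries a factor $(x_1)_+^{s+m-k-2i}$ with strictly positive exponent, so $u\equiv 0$ on $\R^N\setminus\overline{\Rp}$.  For the $s$-harmonicity of $u$ I would work with the compact form \eqref{Eden:form}: by Theorem~\ref{thm:Poisson}, $x\mapsto(x_1)_+^s|x-y|^{-N}$ is $s$-harmonic in $\Rp$ for every fixed $y$ with $y_1<0$.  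Since $(-\Delta)^s_x$ commutes with $y_1$-differentiation and with the limit $y_1\to 0^-$ (both justified by dominated convergence applied to the finite-difference formula \eqref{Ds:def}), each $E_{k,s}(\cdot,y)$ is $s$-harmonic in $\Rp$ for $y\in\partial\Rp$, and so is $u$ after integrating against $h_k$.

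The technical core is verifying the boundary conditions.  Writing $u=\sum_k u_k$ with $u_k(x)=\int E_{k,s}(x,y)h_k(y)\,dy$, I perform the change of variables $y=(0,x'+x_1 z')$ in each $u_k$; using $|y-x|^2=x_1^2(1+|z'|^2)$ one arrives at the factorization
\begin{equation*}
	x_1^{1-\sigma}u_k(x)=x_1^k F_k(x_1,x'),\qquad F_k(x_1,x'):=\sum_i \alpha_{m-k,i}\int_{\R^{N-1}}\frac{h_k(0,x'+x_1 z')}{(1+|z'|^2)^{(N+2(m-k-i))/2}}\,dz'.
\end{equation*}
By Leibniz, $\partial_1^j[x_1^k F_k]\vert_{x_1=0}$ vanishes for $j<k$ and equals $\tfrac{j!}{(j-k)!}\partial_{x_1}^{j-k}F_k(0,x')$ for $j\geq k$.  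Taylor-expanding $h_k(0,x'+x_1 z')$ in $x_1$ and using the symmetry $\int_{\R^{N-1}}(z')^\alpha(1+|z'|^2)^{-p}\,dz'=0$ whenever some component of $\alpha$ is odd reduces the task to checking the even-moment identities
\begin{equation*}
	\sum_i \alpha_{m-k,i}\int_{\R^{N-1}}\frac{(z')^{2\beta}}{(1+|z'|^2)^{(N+2(m-k-i))/2}}\,dz'=\delta_{|\beta|,0},\qquad |\beta|\in\{0,1,\ldots,\lfloor(m-k)/2\rfloor\},
\end{equation*}
which, by rotational symmetry, collapse to $\lfloor(m-k)/2\rfloor+1$ radial Beta-type identities in $|z'|$.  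These are precisely the relations pinning down the Edenhofer coefficients $\alpha_{l,i}$ in \eqref{constants}, and they yield $D^{j+\sigma-1}u_k=\delta_{jk}h_k$; summing in $k$ gives $D^{j+\sigma-1}u=h_j$.

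Finally, for the bounds \eqref{es1}: the far-field estimate for $x_1\geq 1$ follows termwise from $|y-x|\gtrsim x_1+|x'|$ on the compact support of the $h_k$; the near-boundary estimate is the quantitative $C^{m+\beta}$-Hölder version of the Taylor argument above, with remainder controlled by $Cx_1^\beta$ and spatial decay $(1+|x'|^N)^{-1}$ coming from the support of the $h_k$.  For uniqueness, if $v$ is any other $C^{2s+\beta}$-solution, $w:=u-v$ is $s$-harmonic in $\Rp$, vanishes outside $\overline{\Rp}$, and, integrating the first bound of \eqref{es1} in $x_1$ across $(0,1)$ and patching with the second, satisfies the global estimate $|w(x)|\leq Cx_1^s/(1+|x|^N)$; Theorem~\ref{thm:Poisson}'s uniqueness statement applied with $g\equiv 0$ then forces $w\equiv 0$.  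The main obstacle I anticipate is precisely the moment cancellation in the third step: showing that the coefficients \eqref{constants} satisfy the required orthogonality is the calculational heart of the theorem and is exactly what is conjectured but left unproven in \cite[Satz~3]{E77}.
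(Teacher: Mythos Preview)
Your plan tracks the paper's proof closely in structure: the change of variables $y=(0,x'+x_1 z')$, the factorization $x_1^{1-\sigma}u_k=x_1^kF_k$, and the reduction of the trace identities to moment integrals are exactly what the paper does (its Theorem~\ref{thm:edenhofer}, supported by Lemmas~\ref{int:lem} and~\ref{last:lem}). The ``obstacle'' you flag is resolved there: after evaluating the radial integrals via \eqref{4674354}, the required vanishing for $j>k$ collapses to a single combinatorial sum, which the paper closes with Gould's identity $\sum_{i=0}^{\lfloor l/2\rfloor}(-1)^i\binom{x}{i}\binom{2x-2i}{l-2i}=2^l\binom{x}{l}$.

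Your route to $s$-harmonicity is different from the paper's. The paper does not differentiate the nonlocal Poisson kernel in $y_1$; instead it writes each summand of $E_{k,s}$ as $(x_1)_+^{s-i}$ times $(x_1)_+^{m-k-i}|x-z|^{-N-2(m-k-i)}$, shows the latter is $(s-i)$-harmonic by applying the Kelvin transform (Proposition~\ref{lem:sharm} with $v=0$, $c=1$) to the one-dimensional $s$-harmonic functions of Proposition~\ref{s-harmonic2}, and then uses that $(s-i)$-harmonic implies $s$-harmonic. Your approach can be made to work distributionally, but note that Theorem~\ref{thm:Poisson} only asserts harmonicity of the \emph{integral} of $\Gamma_s$ against compactly supported $g$, not of $\Gamma_s(\cdot,y)$ for a fixed $y$; a delta-approximation is needed first, and the commutation of $(-\Delta)^s_x$ with $\partial_{y_1}$ and with $y_1\to 0^-$ via \eqref{Ds:def} is more delicate than a single appeal to dominated convergence, since the hypersingular integrand involves $u(x+k\xi)$ for unbounded $\xi$.

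There is a genuine gap in your uniqueness argument. Integrating $|\partial_1^m(x_1^{1-\sigma}w)|\leq Cx_1^\beta/(1+|x'|^N)$ in $x_1$ a total of $m$ times (using $D^{j+\sigma-1}w=0$ for $j<m$) gives $|x_1^{1-\sigma}w|\leq C'x_1^{m+\beta}/(1+|x'|^N)$, hence only
\[
|w(x)|\leq C'\,\frac{x_1^{\,s-1+\beta}}{1+|x'|^N}\qquad\text{on }\{0<x_1<1\},
\]
not $x_1^{s}$ as you state. For $\beta<1$ this falls strictly short of the hypothesis \eqref{bound} in Theorem~\ref{thm:Poisson}, so that theorem's uniqueness clause does not apply. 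The paper circumvents this with a dedicated lemma (Lemma~\ref{lem:bvpN}): it Kelvin-transforms $w$ to the ball, where the bound becomes $|K_sw|\leq C(1-|x|^2)^{s-1+\beta}$, and then invokes the ball uniqueness result from \cite{AJS17a}, which allows the weaker exponent $s-1+\alpha$ for any $\alpha\in(0,1)$.
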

See also Theorem \ref{thm:edenhofer} below for more estimates on each of the kernels $E_{k,s}$.
We remark that the kernels $E_{m-1,s}$ and $E_{m,s}$ are connected via the trace operators with the Green function $\cG_s$,
see \eqref{first} and \eqref{second} below; however, the relationship between $E_{k,s}$ and $\cG_s$ is not so simple for $k\leq m-2$, see Remark \ref{ns:rem}. These identities are relevant to treat more general domains and for integration by parts formulas, for which only partial results are currently known in the fractional setting, see \cite{G2018, RS15}.  We also note that a boundary kernel for the half-space in the case $s\in(0,1)$ can be found in \cite[equation (3.38)]{B99}.

In general, one can find $s$-harmonic functions with much larger growth at infinity than those constructed in Theorem \ref{exp:sol:thm}, as the next proposition shows.
\begin{prop}\label{s-harmonic2}
	Let $m\in \N_0$, $\sigma\in(0,1]$, and $s=m+\sigma>0$. Then
	\begin{align*}
		\text{$x\mapsto (x_1)_+^{k+\sigma-1}$ is $s$-harmonic in $\Rp$ for any $k\in\{0,\ldots,2m+1\}$.}
	\end{align*}
\end{prop}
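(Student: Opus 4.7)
The plan is to reduce the $N$-dimensional claim to a one-dimensional identity and then prove the latter by distributional Fourier analysis.

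\emph{Reduction to one dimension.} The function $u(x):=(x_1)_+^{k+\sigma-1}$ depends only on $x_1$, and it lies in $C^{\infty}(\Rp)\cap\cL^1_s$: local integrability near $\partial\Rp$ follows from $k+\sigma-1>-1$, and the growth at infinity from $k+\sigma-1<2s$, both automatic for $k\leq 2m+1$ and $\sigma\in(0,1]$. Plugging $u$ into \eqref{Ds:def} and integrating the kernel first in the transverse variables $y'=(y_2,\ldots,y_N)$ via
\[
\int_{\R^{N-1}}\frac{dy'}{(y_1^2+|y'|^2)^{(N+2s)/2}}=c\,|y_1|^{-1-2s},
\]
and using that $c_{N,s}$ is normalized so that $(-\Delta)^s$ has Fourier symbol $|\xi|^{2s}$ (which matches the one-dimensional constant $c_{1,s}$, since $\hat u=\hat v\otimes\delta(\xi')$ and the restriction of $|\xi|^{2s}$ to $\xi'=0$ is $|\xi_1|^{2s}$), one obtains
\[
(-\Delta)^s u(x)=((-\partial_t^2)^s v)(x_1), \qquad v(t):=(t)_+^{k+\sigma-1}.
\]
It therefore suffices to prove $(-\partial_t^2)^sv\equiv 0$ on $(0,\infty)$.

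\emph{Fourier vanishing in 1D.} Set $\alpha:=k+\sigma-1$. As a tempered distribution,
\[
\hat v(\xi)=\Gamma(\alpha+1)\bigl[e^{-i\pi(\alpha+1)/2}\,\xi_+^{-\alpha-1}+e^{i\pi(\alpha+1)/2}\,\xi_-^{-\alpha-1}\bigr],
\]
so multiplication by $|\xi|^{2s}$ leaves the expression in the same form with $-\alpha-1$ replaced by $\mu:=2s-\alpha-1$. The crucial observation is that the offset $\alpha-(s-1)=k-m$ is an integer, hence $\alpha+1-\mu=2(k-m)$ is even and so $e^{i\pi(\alpha+1)}=e^{i\pi\mu}$. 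Using the identity $(\xi+i0)^\mu=\xi_+^\mu+e^{i\pi\mu}\xi_-^\mu$, the two summands therefore recombine into a single boundary distribution
\[
|\xi|^{2s}\hat v(\xi)=C_{\alpha,s}\,(\xi+i0)^{\mu},
\]
for an explicit constant $C_{\alpha,s}$. Since $(\xi+i0)^\mu$ is the boundary value from the upper half-plane of an analytic, polynomially bounded function, Paley--Wiener implies that its inverse Fourier transform is a tempered distribution supported in $(-\infty,0]$. Hence $(-\partial_t^2)^sv\equiv 0$ on $(0,\infty)$, and combined with smoothness of $(-\partial_t^2)^sv$ there, this gives the pointwise identity $(-\Delta)^s u\equiv 0$ in $\Rp$.

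\emph{Obstacle and alternative.} The hard part will be the phase bookkeeping for the boundary distributions $(\xi\pm i0)^\mu$; the arithmetic compatibility $k-m\in\Z$ is exactly what allows the two one-sided branches to merge into a single upper-half-plane boundary value. A more hands-on route would use the homogeneity of $v$ to write $(-\partial_t^2)^sv(t)=Ct^{\alpha-2s}$ on $(0,\infty)$ and evaluate the constant $C$ by direct substitution of $v$ into the 1D version of \eqref{Ds:def} at $t=1$; the resulting Beta-function integral vanishes at the admissible values of $\alpha$ by the same integer condition, via the standard Mellin-transform identity for $(-\partial^2)^s(t)_+^{\alpha}$.
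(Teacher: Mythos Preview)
Your route is genuinely different from the paper's. After the same dimension reduction (the paper's Lemma~\ref{lem:ldh}), the paper does not touch Fourier analysis: it first treats $m=0$ by pulling back the known $\sigma$-harmonic functions on the ball via the point inversion $K_\sigma$ (Lemma~\ref{s-harmonic1}), and then for $m\geq1$ integrates by parts inside the finite-difference kernel to obtain the pointwise identity $(-\Delta)^{m+\sigma}u=(-\Delta)^\sigma\bigl[(-1)^m u^{(2m)}\bigr]$ on $(0,\infty)$, which reduces the two top exponents $k=2m,\,2m+1$ to the base case; the remaining $k$ are then covered by the implication ``$(s{-}1)$-harmonic $\Rightarrow$ $s$-harmonic'' (see~\eqref{sm1tos}). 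Your argument is more direct and uniform in $k$, but imports the Gel'fand--Shilov calculus of homogeneous distributions and distributional Paley--Wiener; the paper's is more elementary in that respect, at the price of leaning on the Kelvin machinery and prior ball results.

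There is, however, a real technical obstacle in your main line. The step ``multiplication by $|\xi|^{2s}$ sends $\xi_\pm^{-\alpha-1}$ to $\xi_\pm^{\mu}$'' is not the product of a smooth multiplier with a tempered distribution: for $\sigma\in(0,1)$ the function $|\xi|^{2s}$ fails to be $C^\infty$ at the origin, precisely where $\hat v$ is singular, and for the larger admissible $\alpha$ the distributional order of $(\xi-i0)^{-\alpha-1}$ at $0$ exceeds the regularity of $|\xi|^{2s}\hat\phi$ there. Equivalently, $(-\Delta)^s v$ is not a priori given by the Fourier symbol as a tempered distribution, because $(-\Delta)^s\phi$ is not Schwartz and $v\notin L^2$. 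This can be repaired---for instance by analytic continuation in $\alpha$ from a strip where $\hat v$ is locally integrable, or by decomposing $|\xi|^{2s}$ into a combination of $(\xi\pm i0)^{2s}$ and multiplying boundary values on matching sides---but it is more than the ``phase bookkeeping'' you flag. Your own alternative is in fact the cleaner path here: homogeneity forces $(-\partial_t^2)^s t_+^\alpha=C(\alpha,s)\,t^{\alpha-2s}$ on $(0,\infty)$, and evaluating the one-dimensional finite-difference integral \eqref{Ds:def} at $t=1$ reduces $C(\alpha,s)$ to an explicit Beta/Gamma expression whose zeros in $\alpha$ are exactly the $2m+2$ values $\sigma-1,\sigma,\ldots,2m+\sigma$; this sidesteps the singular-multiplication issue entirely and is closer in spirit to the paper's pointwise manipulations.
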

Observe that $(x_1)_+^{\sigma-1}$ is \textit{large} or \emph{singular} at $\partial\Rp$ in the sense that it diverges as $x_1\to 0$.  This is also the case of $u$ as in Theorem \ref{exp:sol:thm} if $h_0\neq 0$. These large $s$-harmonic functions are a purely nonlocal effect which can also be observed in other domains \cite{B99,AJS16b,nicola,G15:2}. The proof of Proposition \ref{s-harmonic2} is done first in dimension one $N=1$ using several pointwise manipulations, and then extended to higher-dimensions using the following result.
\begin{lemma}\label{lem:ldh}
	Let $s>0$, $\beta\in(0,1)$, $k,N\in \N$, $k<N$, $V\subset\R^k$ open, $U:=V\times \R^{N-k}$, and functions 
	\begin{align*}
		u:\R^N\to \R,\ v:\R^k\to \R,\ \ \text{ satisfying }\ \ u(x_1,\ldots,x_N)=v(x_1,\ldots, x_k)\ \ \text{for all }(x_1,\ldots,x_N)\in\R^N.
	\end{align*}
	Then $v\in \cL^1_s(\R^k)\cap C^{2s+\beta}(V)$ if and only if $u\in\cL^1_s(\R^N)\cap C^{2s+\beta}(U)$; furthermore,
	\begin{align*}
		(-\Delta)^s u(x_1,\ldots,x_{N})=(-\Delta)^s v(x_1,\ldots,x_{k})\qquad \text{ for all }x\in U.
	\end{align*}
	In particular, $u$ is $s$-harmonic in $U$ if and only if $v$ is $s$-harmonic in $V$.
\end{lemma}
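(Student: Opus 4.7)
Write a generic point of $\R^N$ as $x=(x',x'')$ with $x'\in\R^k$ and $x''\in\R^{N-k}$, so that by hypothesis $u(x',x'')=v(x')$. I split the claim into three parts: (i) equivalence of the local regularity on $U$ and $V$; (ii) equivalence of membership in the weighted $L^1$-space $\cL^1_s$; (iii) the pointwise identity for $(-\Delta)^s$. Part (i) is essentially tautological: since $u$ is constant along the fibres $\{x'\}\times\R^{N-k}$, all partial derivatives of $u$ in the $x''$-directions vanish, while $\partial^\alpha_{x'} u(x',x'')=\partial^\alpha v(x')$. Hence the Hölder seminorms of $u$ on $U$ and of $v$ on $V$ coincide (up to the factor that $U$ is a product), so $u\in C^{2s+\beta}(U)\Leftrightarrow v\in C^{2s+\beta}(V)$.

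For part (ii), I would use Fubini and the standard computation
\begin{align*}
\int_{\R^{N-k}}\frac{dx''}{(1+|(x',x'')|)^{N+2s}}
\;=\; C_{N,k,s}\,(1+|x'|)^{-k-2s}\bigl(1+o(1)\bigr)
\end{align*}
as $|x'|\to\infty$ (and remains bounded for $|x'|$ bounded), which follows by the change of variables $x''=(1+|x'|)z$. Integrating in $x'$ then yields
\begin{align*}
\int_{\R^N}\frac{|u(x)|}{(1+|x|)^{N+2s}}\,dx \;\asymp\; \int_{\R^k}\frac{|v(x')|}{(1+|x'|)^{k+2s}}\,dx',
\end{align*}
so $u\in\cL^1_s(\R^N)\Leftrightarrow v\in\cL^1_s(\R^k)$.

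The core of the proof is part (iii). For $s\in\N$ there is nothing to do, because the classical Laplacian $\sum_{i=1}^N\partial_i^2$ reduces to $\sum_{i=1}^k\partial_i^2$ when applied to a function depending only on $x'$. For $s\notin\N$, I would start from the finite-difference representation \eqref{Ds:def}, observe that the increments $u(x+\ell y)=v(x'+\ell y')$ involve only $y'$, and apply Fubini with respect to the splitting $y=(y',y'')$. The inner integral
\begin{align*}
I(y')\;:=\;\int_{\R^{N-k}}\frac{dy''}{(|y'|^2+|y''|^2)^{(N+2s)/2}}
\end{align*}
is computed by the substitution $y''=|y'|z$ and equals $K_{N,k,s}\,|y'|^{-k-2s}$ for an explicit positive constant $K_{N,k,s}=\int_{\R^{N-k}}(1+|z|^2)^{-(N+2s)/2}\,dz$. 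Plugging this back gives
\begin{align*}
(-\Delta)^s u(x)
\;=\;\frac{c_{N,s}\,K_{N,k,s}}{2}\int_{\R^k}\frac{\sum_{\ell=-(m+1)}^{m+1}(-1)^\ell\binom{2m+2}{m+1-\ell}v(x'+\ell y')}{|y'|^{k+2s}}\,dy'.
\end{align*}
Comparing with \eqref{Ds:def} in dimension $k$, the identity reduces to the constant-matching
\begin{align*}
c_{N,s}\,K_{N,k,s}\;=\;c_{k,s},
\end{align*}
which is the main technical point. I would verify this either directly from the explicit formula in \eqref{cNms:def}, or—more conceptually—by testing both sides of the desired identity against a Schwartz function and using the Fourier characterization of $(-\Delta)^s$: since $\widehat{u}=\widehat{v}\otimes\delta_{\xi''=0}$ and $|\xi|^{2s}=|\xi'|^{2s}$ on $\{\xi''=0\}$, the Fourier symbol forces $c_{N,s}K_{N,k,s}=c_{k,s}$.

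The main obstacle I anticipate is a bookkeeping one: justifying Fubini and the interchange of the principal value (or rather, the absolute convergence of the finite-difference integral) uniformly in $y'$ near $0$, using the $C^{2s+\beta}$ regularity to cancel the singular part of the kernel. Once this is in place, the last sentence on $s$-harmonicity is immediate, since $(-\Delta)^su\equiv 0$ on $U$ iff $(-\Delta)^sv\equiv 0$ on $V$.
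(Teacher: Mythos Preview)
Your proposal is correct and follows essentially the same route as the paper: split $y=(y',y'')$, apply Fubini to the finite-difference integral, scale out $|y'|$ from the inner integral, and reduce everything to the constant identity $c_{N,s}\int_{\R^{N-k}}(1+|z|^2)^{-(N+2s)/2}\,dz=c_{k,s}$. The paper verifies this last identity directly from \eqref{cNms:def} together with the Beta-type integral \eqref{4674354}, rather than via your Fourier-transform alternative; likewise, the $\cL^1_s$ equivalence is checked by a similar Fubini/polar-coordinates computation.
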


A direct calculation (see Lemma  \ref{lem:prop-edenhofer} below) shows that the $s$-harmonic functions $x_1^{k+\sigma-1}$ have the following relationship to the summands in \eqref{edenhofer}: let $\sigma\in(0,1)$, $m\in \N$, and $k\in\{0,\ldots,m\}$, then
\begin{align*}
	\int_{\partial\Rp}\ \frac{(x_1)_+^{s+m-k}}{|x-z|^{N+2(m-k)}}\ dz=\frac{\pi^{\frac{N-1}{2}}\Gamma(m-k+\frac{1}{2})}{\Gamma(\frac{N}{2}+m-k)}\ (x_1)_+^{k+\sigma-1}\qquad \text{for $x\in \Rp$}.
\end{align*}
In particular, this shows the strong influence that compactly supported data has on the growth or decay of solutions at infinity. The fact that our results are presented for compactly supported functions is not only for a better presentation, but also because general data yield a more complex problem and even for the Laplacian $s=1$ this is an active research topic, see for example \cite{ZDQ16}. To mention one difficulty, without compact support bounds such as \eqref{bound}, \eqref{es}, or \eqref{es1} may not hold, and our uniqueness argument (see Lemma \ref{lem:bvpN} below) cannot be applied; in fact, without growth assumptions uniqueness does not hold, by Proposition \ref{s-harmonic2}.

\medskip

One of the key ingredients in our proofs is the following point inversion transformation.  Let $v\in \R^N$, $c\in\R$, $\kappa:\R^N\setminus\{-v\}\to\R^N$, and $K_{s}:C^{\infty}_c(\R^N\setminus\{-v\})\to C^{\infty}_c(\R^N)$ be given by 
\begin{align}\label{kappa:K:def}
	\kappa x:=c\frac{x+v}{|x+v|^{2}}-v\qquad \text{ and }\qquad K_{s} u (x):=|x+v|^{2s-N}u(\kappa x)\qquad \text{ for }x\in\R^N\backslash\{-v\}.
\end{align}

The next proposition states that $K_{s}$ preserves $s$-harmonicity.
\begin{prop}\label{lem:sharm}
	For $s,c>0$, $v\in \R^N$ let $\kappa$ and $K_s$ as in \eqref{kappa:K:def}. Then, for $u\in C^{\infty}_c(\R^N\setminus\{-v\})$ and $x\in \R^N\setminus\{-v\}$,
	\begin{align}\label{kelvin-trafo}
		(-\Delta)^s (K_s u)(x)&=c^{2s}\frac{K_s((-\Delta)^su)(x)}{|x+v|^{4s}}, \quad\text{ \emph{i.e.}, }\quad (-\Delta)^s \Big(\frac{u\circ \kappa (x)}{|x+v|^{N-2s}}\Big)=c^{2s}\frac{(-\Delta)^su(\kappa x)}{|x+v|^{N+2s}}.
	\end{align}
	As a consequence, if $U\subset \R^N\setminus\{-v\}$ is an open set and $u\in \cL^1_s$, then $u$ is distributionally $s$-harmonic in $U$ if and only if $K_su$ is distributionally $s$-harmonic in $\kappa(U)$.
\end{prop}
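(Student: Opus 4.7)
The plan is to reduce to a canonical inversion and then carry out a change of variables. Translation invariance $(-\Delta)^s[u(\cdot-v)](x)=[(-\Delta)^s u](x-v)$ allows me to assume $v=0$, so that $\kappa x=cx/|x|^2$ and $K_s u(x)=|x|^{2s-N}u(cx/|x|^2)$. The scaling identity $(-\Delta)^s[u(\lambda\,\cdot)](x)=\lambda^{2s}[(-\Delta)^s u](\lambda x)$ further reduces to $c=1$, i.e.\ the standard inversion $\kappa x=x/|x|^2$, in which case the target identity is $(-\Delta)^s(K_s u)(x)=|x|^{-N-2s}(-\Delta)^s u(\kappa x)$.

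For the core identity I would perform the change of variables $y=\kappa z$ in the integral defining $(-\Delta)^s(K_s u)(\kappa\xi)$. The three geometric inputs
\begin{equation*}
|\kappa y-\kappa z|=\frac{|y-z|}{|y|\,|z|},\qquad |\det D\kappa(z)|=|z|^{-2N},\qquad K_s u(\kappa z)=|z|^{N-2s}u(z),
\end{equation*}
combined with the principal-value representation of $(-\Delta)^s$ available for $s\in(0,1)$, yield
\begin{equation*}
(-\Delta)^s(K_s u)(\kappa\xi)=c_{N,s}|\xi|^{N+2s}\,{\rm PV}\int\frac{(|\xi|/|z|)^{N-2s}u(\xi)-u(z)}{|\xi-z|^{N+2s}}dz.
\end{equation*}
Isolating the $u(\xi)$ contribution, one has to check that ${\rm PV}\int((|\xi|/|z|)^{N-2s}-1)|\xi-z|^{-N-2s}dz=0$, which follows from the substitution $z\mapsto z/|z|^2$; combined with the same identity $|\kappa y-\kappa z|=|y-z|/(|y||z|)$ it maps the integral to its own negative. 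What remains equals $|\xi|^{N+2s}(-\Delta)^s u(\xi)$, proving \eqref{kelvin-trafo} when $s\in(0,1)$; it is precisely the exponent $2s-N$ in the definition of $K_s$ that makes this cancellation work.

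The main obstacle is extending the argument to general $s=m+\sigma$ with $m\ge 1$, since the higher-order finite differences $u(x+ky)$ in \eqref{Ds:def} interact badly with the nonlinear map $\kappa$. My preferred route is to bypass the pointwise formula and use the fact, established in \cite{AJS17b}, that on $C^\infty_c$ functions \eqref{Ds:def} agrees with the Fourier multiplier $|\xi|^{2s}$; on this side the Kelvin identity reduces to a classical Riesz-potential manipulation. An alternative is to factor $(-\Delta)^s=(-\Delta)^m(-\Delta)^\sigma$ and iteratively combine the identity just proved for $\sigma\in(0,1)$ with its classical integer-order analogue, carefully tracking the exponents of the intermediate Kelvin transforms.

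Finally, the distributional conclusion follows by duality. Given $\varphi\in C_c^\infty(\kappa(U))$, the function $\tilde\varphi(y):=c^{-2s}|y+v|^{4s}K_s\varphi(y)$ lies in $C_c^\infty(U)$ since $\kappa$ is a smooth diffeomorphism of $\R^N\setminus\{-v\}$. Applying \eqref{kelvin-trafo} to $\varphi$, pairing against $u\in\cL^1_s$, and changing variables $x=\kappa y$ in the resulting integral (whose Jacobian $|y+v|^{-2N}$ exactly absorbs the prefactors) yields $\langle K_s u,(-\Delta)^s\varphi\rangle=\langle u,(-\Delta)^s\tilde\varphi\rangle$, so distributional $s$-harmonicity of $u$ on $U$ is equivalent to that of $K_s u$ on $\kappa(U)$.
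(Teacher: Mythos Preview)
Your reduction to $v=0$, $c=1$ and your treatment of the case $s\in(0,1)$ are fine and essentially reproduce what the paper cites from \cite{RS12}; the vanishing of the principal-value integral is correct (though the right substitution is the inversion through the sphere of radius $|\xi|$, i.e.\ $z\mapsto |\xi|^2 z/|z|^2$, not plain $z\mapsto z/|z|^2$).

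The genuine gap is the higher-order case $s=m+\sigma$, $m\ge 1$, which you explicitly flag as the main obstacle but do not actually carry out. Your ``preferred route'' via the Fourier multiplier is not a proof: the Kelvin transform has no simple expression on the Fourier side, and ``a classical Riesz-potential manipulation'' does not point to any concrete computation. Your alternative route of factoring $(-\Delta)^s=(-\Delta)^m(-\Delta)^\sigma$ is closer to what the paper does, but it is not a matter of merely ``tracking exponents'': after applying $(-\Delta)^\sigma$ the function is no longer compactly supported, and naively iterating the integer-order Kelvin identity for $(-\Delta)^m$ does not close. The paper instead proves the step $s\to s+1$ by establishing, via the Fourier transform, the two auxiliary identities
\[
x\cdot\nabla K_s u=(2s-N)K_s u-K_s(x\cdot\nabla u),\qquad
(-\Delta)^s(|x|^2\Delta u)=-|x|^2(-\Delta)^{s+1}u+4s(-\Delta)^s(x\cdot\nabla u)+2s(N-2-2s)(-\Delta)^s u,
\]
and then expanding $(-\Delta)^{s+1}K_{s+1}u=-(-\Delta)^s\Delta(|x|^{2s}K_1 u)$ so that after several cancellations the induction hypothesis can be applied term by term. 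This is the substantive computation; your proposal does not contain it.

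Two smaller points on the distributional part. First, the test function you write down is wrong: carrying out the change of variables you describe gives $\langle K_s u,(-\Delta)^s\varphi\rangle=\langle u,(-\Delta)^s K_s\varphi\rangle$ (up to a power of $c$), so $\tilde\varphi=K_s\varphi$, not $c^{-2s}|y+v|^{4s}K_s\varphi$. Second, for the pairing $\langle K_s u,(-\Delta)^s\varphi\rangle$ to make sense you need $K_s u\in\cL^1_s$; the paper proves this separately (Lemma~\ref{KsinL1s}) and you should too.
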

Proposition \ref{lem:sharm} can be deduced from \cite[Lemma 3]{DG2016}, where covariance under M\"{o}bius transformations is studied using a unique continuation argument. Here we present a different proof of Proposition \ref{lem:sharm} based on induction, which could be of independent interest.

Observe that, if $c=1$ and $v=0$, then $\kappa$ is the usual Kelvin transform which maps $B\backslash\{0\}$ to $\R^N\backslash \overline{B}$ conformally and vice versa; whereas, if $c=2$ and $v=e_1$, then $\kappa$ maps $B$ to the half-space $\R^N_+$ and $\partial B\backslash\{-e_1\}$ to $\partial\Rp$. Proposition \ref{lem:sharm} was known for $s\in(0,1)$, see for example \cite{BZ06,FW12}, and the case $s\in\N$ is classical.

\medskip

The $K_s$ transformation allows us to establish a link between \eqref{eq:intro:main} and its equivalent problem on balls, for which solutions and representation formulas are known \cite{AJS17a}. However, the case of higher-order boundary Poisson kernels is rather delicate, since $K_s$ does not map directly the trace operator of the ball to that of the half-space; in particular, these kernels require subtle and intricate combinatorial identities, which are inherent to higher-order problems.

We remark that Poisson kernels for the half-space associated to a large class of (local) higher-order elliptic operators satisfying general boundary conditions were obtained in \cite{ADN59}. Due to their generality, the formulas from \cite{ADN59} are much less explicit than the ones presented in Theorem \ref{exp:sol:thm}.

\medskip

The paper is organized as follows.  In Section \ref{pre:sec} we collect some useful results concerning integration by parts, properties of the trace operator \eqref{trace}, distributional solutions, and the explicit formula for the Green function in a ball. Section \ref{poi:sec} is devoted to properties of point-inversion transformations and the proof of Proposition \ref{lem:sharm}. Finally, Section \ref{E:k:sec} contains the proofs of Theorems \ref{thm:Poisson}, \ref{dist:sol:l}, \ref{exp:sol:thm}, Proposition \ref{s-harmonic2}, and Lemma \ref{lem:ldh}; this section is divided in 4 subsections dedicated respectively to the nonlocal Poisson kernel, the Green function, one-dimensional $s$-harmonic functions, and the boundary Poisson kernels.

\subsection{Notation}

For $m\in \N_0$ and $U\subset \R^N$ open we write $C^{m,0}(U)$ to denote the space of $m$-times continuously differentiable functions in $U$ and, for $\sigma\in(0,1]$ and $s=m+\sigma$, we write $C^s(U):=C^{m,\sigma}(U)$ to denote the space of functions in $C^{m,0}(U)$ whose derivatives of order $m$ are (locally) $\sigma$-H\"older continuous in $U$ or (locally) Lipschitz continuous in $U$ if $\sigma=1$. We denote by $C^s(\overline{U})$ the set of functions $u\in C^s(U)$ such that 
\begin{align*}
	\|u\|_{C^s(U)}:=\sum_{|\alpha|\leq m}\|\partial^{\alpha}u\|_{L^{\infty}(U)}+\sum_{|\alpha|=m}\ \sup_{\substack{x,y\in U \\x\neq y}}\  \frac{|\partial^{\alpha}u(x)-\partial^{\alpha}u(y)|}{|x-y|^{\sigma}}<\infty.
\end{align*}
Moreover, for $s\in(0,\infty]$, $C^s_c(U):=\{u\in C^s(\R^N): \supp \ u\subset U\text{ is compact}\}$ and $C^s_0(U):=\{u\in C^s(\R^N): u= 0 \text{ on $\R^N\setminus U$}\}$, where $\supp\ u:=\overline{\{ x\in U\;:\; u(x)\neq 0\}}$ is the support of $u$.  
\medskip

We use $u_+:= \max\{u,0\}$ to denote the positive part of $u$ and $(x_1)^{\beta}_+$, $\beta\in\R$, to denote the function
\begin{equation*}
	(x_1)^{\beta}_+:=\left\{\begin{aligned}
		&x_1^{\beta},&& \quad \text{if $x_1>0$,}\\
		&0,&&\quad  \text{if $x_1\leq 0$,}
	\end{aligned}\right.
\end{equation*}
If $f: \R^N\times\R^N\to \R$ we write $(-\Delta)_x^s f(x,y)$ to denote derivatives with respect to $x$, whenever they exist in some appropriate sense. 

\medskip

For $s>0$ we let (see \cite{FW16,S07,AJS16a,AJS16b})
\begin{align}\label{cL1s}
	\cL^1_{s} :=\cL^1_s(\R^N):=\Big\{u\in L^1_{loc}(\R^N)\;:\; \int_{\R^N}\frac{|u(x)|}{1+|x|^{N+2s}}\ dx<\infty \Big\}.
\end{align}

Let $\omega_{N}=2\pi^{N/2}\Gamma(N/2)^{-1}$ denote the $(N-1)$-dimensional measure of the sphere in $\R^N$.  We also use the following constants.
\begin{align}\label{constants}
	k_{N,s}:=\frac{\Gamma(\frac{N}{2})}{\pi^\frac{N}{2}4^s\Gamma(s)^2},\qquad {\alpha_{l,i}:=\frac{(-1)^i2^{l-2i}\Gamma(\frac{N}{2}+l-i)}{\pi^{\frac{N}{2}}(l-2i)!\ i!}},\qquad \gamma_{N,\sigma}:=\frac{\Gamma(\frac{N}{2})}{\pi^{\frac{N}{2}}\Gamma(\sigma)\Gamma(1-\sigma)},
\end{align}
where $\Gamma$ is the standard \emph{Gamma function}. Furthermore, for a set $A\subset \R^N$ we let $|A|$ denote its $N$-dimensional Lebesgue measure. For $x\in \R^N$ we sometimes write $x=(x_1,x')$ with $x_1\in \R$ and $x'\in \R^{N-1}$ ($x'=0$ if $N=1$). We also use the standard multi-index notation: for $\alpha=(\alpha_1,\ldots, \alpha_N)\in \N^N$,  
\begin{align*}
	{|\alpha|=\sum_{j=1}^N\alpha_j},\qquad \alpha!=\alpha_1!\ldots\alpha_N!,\qquad \partial^\alpha u = \frac{\partial^{|\alpha|} u}{\partial x_1^{\alpha_1}\cdots\partial x_N^{\alpha_N}},\qquad\text{ and } \qquad  y^\alpha = \prod_{i=1}^N y_i^{\alpha_i}.
\end{align*}

We use $\delta_{j,k}$ to denote a Kronecker delta, that is, $\delta_{j,k}=1$ if $j=k$ and $\delta_{j,k}=0$ if $j\neq k$.  We often use the identity \cite[equation (16) page 10]{EMOT81} 
\begin{align}\label{4674354}
	\int_0^\infty \frac{t^x}{(1+t^2)^{y}}\ dt=\frac{\Gamma(\frac{x+1}{2})\Gamma(y-\frac{x+1}{2})}{2\Gamma(y)}
	\qquad \text{for $x,y\in\R$, $0<\frac{x+1}{2}<y$}.
\end{align}
The constant in \eqref{Ds:def} is a positive normalization constant given by 
\begin{align}\label{cNms:def}
	c_{N,s}:=\frac{4^{s}\Gamma(\frac{N}{2}+s)}{ \pi^{\frac{N}{2}}\Gamma(-s) }\Big(\sum_{k=1}^{m+1}(-1)^{k}{ \binom{2m+2}{m+1-k}} k^{2s}\Big)^{-1},
\end{align}
which allows the following relationship: if $\cF(f)$ denotes the Fourier transform of $f$, then
\begin{align}\label{f:rel}
	\cF((\Delta)^s u)(\xi)=|\xi|^{2s}\cF(u)\qquad \text{ for all }u\in C_c^\infty(\R^N),
\end{align}
see, for example, \cite[Theorem 1.9]{AJS17b}. Finally, in dimension one ($N=1$), the boundary integral is meant in the sense $\int_{\partial \R^1_+}f(y)\ dy=f(0)$.

\section{Preliminary results and definitions}\label{pre:sec}

\subsection{Integration by parts formula}

We use the following integration by parts formula from \cite{AJS17b}.

\begin{lemma}\label{ibyp}[Lemma 1.5 from \cite{AJS17b}]
	Let $m\in \N$, $\beta\in(0,1)$, $\sigma\in(0,1]$, $s=m+\sigma$, $U\subset\R^N$ open, and $u\in C^{2s+\beta}(U)\cap \cL^1_s$. Then 
	\begin{align*}
		\int_{\R^N} u(x)(-\Delta)^s \varphi(x)\ dx=\int_{\R^N} (-\Delta)^su(x) \varphi(x)\ dx\qquad \text{ for all }\varphi\in C^\infty_c(U).
	\end{align*}
\end{lemma}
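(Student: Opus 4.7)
The plan is to localize with respect to $\supp\varphi$, handle the compactly-supported regular piece via Fourier transform and mollification, and treat the far-field piece directly by Fubini. Fix $\eta \in C_c^\infty(U)$ with $\eta \equiv 1$ on an open neighbourhood $V$ of $\supp\varphi$ and write $u = u_1 + u_2$ where $u_1 := \eta u \in C_c^{2s+\beta}(\R^N)$ and $u_2 := (1-\eta)u \in \cL^1_s$. Crucially $u_2 \equiv 0$ on $V$, so $(-\Delta)^s u_2(x)$ is a convergent integral via \eqref{Ds:def} for every $x \in \supp\varphi$, and it suffices to establish the identity for $u_1$ and $u_2$ separately (the integer case $\sigma = 1$ is handled by $2m+2$ classical integrations by parts applied after mollification).

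For the $u_1$-piece, set $u_1^\epsilon := u_1 * \rho_\epsilon \in C_c^\infty(\R^N)$ for a standard mollifier $\rho_\epsilon$. Plancherel together with \eqref{f:rel} yields
$$\int_{\R^N} u_1^\epsilon\,(-\Delta)^s\varphi\,dx = \int_{\R^N}(-\Delta)^s u_1^\epsilon\cdot\varphi\,dx.$$
Since $u_1$ is compactly supported in $C^{2s+\beta}$, standard mollifier estimates give $u_1^\epsilon \to u_1$ in $C^{2s+\beta/2}(\R^N)$. Combined with \eqref{Ds:def}, the $(2m+2)$-order finite-difference bound of size $|y|^{2s+\beta/2}\|u_1-u_1^\epsilon\|_{C^{2s+\beta/2}}$ near $y=0$ produces the integrable singularity $|y|^{\beta/2-N}$, while the $|y|^{-N-2s}$-tail at infinity is controlled by the uniform $L^\infty$-convergence; hence $(-\Delta)^s u_1^\epsilon \to (-\Delta)^s u_1$ uniformly on compact sets and the identity passes to the limit.

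For the $u_2$-piece, I would substitute \eqref{Ds:def} on both sides and exchange the order of integration via Fubini. The change of variable $z = x + ky$ for each $k \neq 0$, together with the symmetry $\binom{2m+2}{m+1-k} = \binom{2m+2}{m+1+k}$, the parity $(-1)^{-k} = (-1)^k$, and the isometry $y \mapsto -y$, matches $\int u_2(x)(-\Delta)^s\varphi(x)\,dx$ with $\int \varphi(z)(-\Delta)^s u_2(z)\,dz$ term-by-term in $k$. The $k=0$ contribution is proportional to $u_2(x)\varphi(x)$ and would fail to be integrable against $|y|^{-N-2s}$, but vanishes identically since $u_2 \equiv 0$ on $\supp\varphi$.

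The main obstacle is justifying Fubini on the $u_2$-piece, given that $u_2$ need only lie in $\cL^1_s$ globally. Setting $d := \dist(\supp\varphi, V^c)/(m+1) > 0$, for $|y| < d$ the finite-difference sum $\sum_k(-1)^k\binom{2m+2}{m+1-k}\varphi(x+ky)$ is nonzero only if $x + ky \in \supp\varphi$ for some $|k| \le m+1$, which forces $x \in V$ and hence $u_2(x) = 0$; thus the troublesome small-$y$ singularity simply does not arise. On the complement $|y| \ge d$, the $k = 0$ term is killed by $u_2|_{\supp\varphi} \equiv 0$, and for each of the finitely many $k \neq 0$ the absolute integrand $|u_2(x)\varphi(x+ky)||y|^{-N-2s}$, after the change of variables $w = x + ky$, is majorized by $C\|\varphi\|_\infty\int |u_2(w)|(1+|w|^{N+2s})^{-1}\,dw$, which is finite by the $\cL^1_s$-hypothesis on $u_2$. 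Tonelli and Fubini now apply and conclude the proof.
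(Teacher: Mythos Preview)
The paper does not prove this lemma; it is quoted verbatim from \cite{AJS17b} and no argument is supplied. So there is nothing to compare your approach against, and your write-up actually provides more than the paper does. The strategy you chose --- split $u=\eta u+(1-\eta)u$, handle the compactly supported regular piece by mollification and Fourier/Plancherel, and handle the far piece by Fubini on the finite-difference formula --- is sound and is in fact the standard route to such identities.

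Two small points of polish. First, the bound $|\delta_m(u_1^\eps-u_1)(x,y)|\le C|y|^{2s+\beta/2}\|u_1^\eps-u_1\|_{C^{2s+\beta/2}}$ is only literally correct when $2s+\beta/2<2m+2$; if $2\sigma+\beta/2\ge 2$ one should instead use the $C^{2m+2}$-bound $|\delta_m f|\le C|y|^{2m+2}\|f\|_{C^{2m+2}}$, which still gives an integrable singularity $|y|^{2m+2-N-2s}=|y|^{2-2\sigma-N}$ near the origin (and $\|u_1^\eps-u_1\|_{C^{2m+2}}\le\|u_1^\eps-u_1\|_{C^{2s+\beta/2}}\to 0$). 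Second, your Fubini justification for the $u_2$-piece is correct in spirit but the sentence ``after the change of variables $w=x+ky$, is majorized by $C\|\varphi\|_\infty\int|u_2(w)|(1+|w|^{N+2s})^{-1}dw$'' compresses two substitutions into one phrase; the clean way is to change $x\mapsto w=x+ky$ so that $\varphi$ is evaluated on its compact support, and then in the remaining $y$-integral change $y\mapsto z=w-ky$ so that $u_2$ appears with argument $z$, at which point $|z-w|\ge\dist(\supp\varphi,V^c)>0$ and the $\cL^1_s$-bound applies. None of this affects the validity of the argument.
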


\subsection{Boundary trace operator}

Let $\sigma\in(0,1]$, $k\in \N_0$, and $\Rp:=\{x\in \R^N\;:\; x_1>0\}$. We now show some frequently used properties of the trace operator $D^{k+\sigma-1}$ defined in \eqref{trace}.
Observe that, by definition, for any $j\in \N_0$,
\[
\R^N\ni x\mapsto u_j(x)=(x_1)_+^{j+\sigma-1}\quad \text{satisfies}\quad D^{k+\sigma-1}u_j(0,x')=\delta_{j,k} \quad\text{ for all $x'\in \R^{N-1}$}
\]
and the Leibniz rule 
\begin{align}\label{leibniz}
	D^{k}(fg)=\sum_{j=0}^{k}(D^{j}f)(D^{k-j}g)  
\end{align}
holds. Moreover, we have the following lemma. 
\begin{lemma}\label{trace:lem}
	Let $k\in\N_0$, $\sigma\in(0,1]$, and $u:\R^N\to\R$ such that $x_1\mapsto x_1^{1-\sigma}u(x_1,x')$ is $C^k([0,\infty))$ for some $x'\in \R^{N-1}$. If 
	\[
	D^{j+\sigma-1}u(0,x')=0 \quad \text{ for $j\in \{0,\ldots,k-1\}$,}
	\] 	
	then
	\[
	D^{k+\sigma-1}u(0,x')=\lim_{x_1\to 0^+} x_1^{1-k-\sigma}u(x_1,x').
	\]
\end{lemma}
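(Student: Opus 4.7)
\textbf{Proof plan for Lemma \ref{trace:lem}.}

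The plan is to reduce the statement to a one-dimensional Taylor expansion of the auxiliary function $f:[0,\infty)\to\R$ defined by $f(t):=t^{1-\sigma}u(t,x')$, where $x'\in\R^{N-1}$ is the fixed point from the hypothesis. By assumption $f\in C^k([0,\infty))$, and the definition \eqref{trace} of the boundary trace operator reads
\[
D^{j+\sigma-1}u(0,x')=\frac{f^{(j)}(0)}{j!}\qquad\text{for every }j\in\{0,\ldots,k\}.
\]
Hence the vanishing hypothesis $D^{j+\sigma-1}u(0,x')=0$ for $j=0,\ldots,k-1$ is precisely the statement that $f(0)=f'(0)=\cdots=f^{(k-1)}(0)=0$.

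Next I apply Taylor's theorem with Peano remainder at the origin. Since $f\in C^k([0,\infty))$ and the first $k-1$ derivatives of $f$ vanish at $0$, we obtain
\[
f(t)=\frac{f^{(k)}(0)}{k!}\,t^k+o(t^k)\qquad\text{as }t\to 0^+.
\]
Dividing by $t^k$ and taking the limit $t\to 0^+$ yields
\[
\lim_{t\to 0^+}\frac{f(t)}{t^k}=\frac{f^{(k)}(0)}{k!}=D^{k+\sigma-1}u(0,x').
\]

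Finally, I rewrite the left-hand side in terms of $u$: by the definition of $f$,
\[
\frac{f(t)}{t^k}=\frac{t^{1-\sigma}u(t,x')}{t^k}=t^{1-k-\sigma}u(t,x'),
\]
which gives the claimed identity. There is no substantive obstacle; the only points requiring a little care are the correct translation between the two equivalent forms of the trace (the derivatives of $f$ at $0$ versus the scaled limits) and invoking Taylor's theorem in the Peano form, which is justified by the mere $C^k$-regularity of $f$ on $[0,\infty)$ without needing any assumption on $f^{(k+1)}$.
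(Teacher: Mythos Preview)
Your proof is correct and follows essentially the same approach as the paper: define the auxiliary function $f(t)=t^{1-\sigma}u(t,x')$, translate the vanishing traces into $f^{(j)}(0)=0$ for $j\le k-1$, apply Taylor's theorem with Peano remainder, and read off the limit. The paper's argument is identical in structure and detail.
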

\begin{proof}
	Fix $x'\in \R^{N-1}$ and let $f:\R\to \R$, $f(r)=r^{1-\sigma}u(r,x')$. By assumption, $f\in C^{k}([0,\infty))$ and
	\begin{align*}
		0=D^{j}u(0,x')=\frac{1}{j!}\lim_{r\to 0^+} f^{(j)}(r)=\frac{1}{j!}f^{(j)}(0)\qquad \text{ for $j\in\{0,\ldots,k-1\}$}.
	\end{align*}
	Hence, by Taylor's theorem
	\[
	f(r)=\sum_{n=0}^{k}\frac{f^{(n)}(0)}{n!}r^n+o(r^k)=\frac{f^{(k)}(0)}{k!}r^k+o(r^k) \quad\text{as $r\to 0^+$,}
	\]
	and then
	\[
	D^{k+\sigma-1}u(0,x')=\frac{1}{k!}\lim_{r\to0^+}f^{(k)}(r)=\lim_{r\to0^+} f(r)r^{-k}=\lim_{r\to 0^+}\frac{u(r,x')}{r^{\sigma-1+k}},
	\]
	as claimed.
\end{proof}

\subsection{Distributional solutions}

Let $U\subset \R^N$ be an open set.  
We say that $u \in \cL^1_{s}$ is a \textit{distributional solution} of $(-\Delta)^su=f$ in $U$ if 
\begin{equation*}
	\int_{\R^N} u(x) (-\Delta)^s \varphi(x)\ dx  =  \int_U f(x) \varphi(x)\ dx   \qquad\text{ for all }\varphi\in C^{\infty}_c(U).
\end{equation*}
In particular, $u\in \cL^1_s$ is \textit{$s$-harmonic in the distributional sense in $U$} if 
\begin{align}\label{u:dist:sol}
	\int_{\R^N} u(x)(-\Delta)^s\varphi(x)\ dx =0\qquad \text{for all }\varphi\in C^{\infty}_c(U). 
\end{align}

For sufficiently smooth functions, $u$ is pointwisely $s$-harmonic if and only if $u$ is distributionally $s$-harmonic.
\begin{lemma}\label{ptod}
	Let $U\subset \R^N$ be an open set, $\beta>0$, and let $u\in \cL^1_s\cap C^{2s+\beta}(U)$.  Then $(-\Delta)^s u(x)=0$ for every $x\in U$ if and only if \eqref{u:dist:sol} holds.
\end{lemma}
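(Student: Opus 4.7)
The plan is to derive both directions from the integration by parts formula (Lemma~\ref{ibyp}) once we observe that, under the hypotheses $u\in\cL^1_s\cap C^{2s+\beta}(U)$, the pointwise fractional Laplacian $x\mapsto (-\Delta)^s u(x)$ is well-defined and continuous on $U$. This continuity follows from the singular-integral representation \eqref{Ds:def}: the integrand is integrable because of the $C^{2s+\beta}$ cancellation near $x$ combined with the $\cL^1_s$ tail bound, and dominated convergence on small perturbations of $x$ yields continuity. I treat this as a standard fact from \cite{AJS17b}; for $s\in(0,1)$, the analogous integration by parts is classical (Fubini on the singular kernel).

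For the forward implication, assume $(-\Delta)^s u(x)=0$ for every $x\in U$. Fix $\varphi\in C^\infty_c(U)$. Since $u\in\cL^1_s\cap C^{2s+\beta}(U)$, Lemma~\ref{ibyp} gives
\begin{equation*}
\int_{\R^N} u(x)(-\Delta)^s\varphi(x)\,dx=\int_{\R^N}(-\Delta)^s u(x)\,\varphi(x)\,dx=\int_U (-\Delta)^s u(x)\,\varphi(x)\,dx=0,
\end{equation*}
where the last step uses $\supp\varphi\subset U$ together with the vanishing of $(-\Delta)^s u$ on $U$. This yields \eqref{u:dist:sol}.

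For the reverse implication, I argue by contradiction. Suppose \eqref{u:dist:sol} holds but there exists $x_0\in U$ with $(-\Delta)^s u(x_0)\neq 0$; without loss of generality, $(-\Delta)^s u(x_0)>0$. By the continuity of $(-\Delta)^s u$ on $U$, there is a radius $r>0$ with $\overline{B_r(x_0)}\subset U$ and $(-\Delta)^s u>0$ on $B_r(x_0)$. Choose a non-negative, not identically zero $\varphi\in C^\infty_c(B_r(x_0))\subset C^\infty_c(U)$. Applying Lemma~\ref{ibyp} once more,
\begin{equation*}
0=\int_{\R^N} u(x)(-\Delta)^s\varphi(x)\,dx=\int_{B_r(x_0)}(-\Delta)^s u(x)\,\varphi(x)\,dx>0,
\end{equation*}
a contradiction.

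The only real technical point — not genuinely an obstacle, but the step that does require invoking external machinery — is the continuity of $(-\Delta)^s u$ on $U$, which justifies both the pointwise integration by parts and the existence of a neighbourhood on which $(-\Delta)^s u$ keeps its sign. Everything else is a direct application of Lemma~\ref{ibyp} and the fundamental lemma of the calculus of variations.
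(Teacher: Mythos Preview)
Your proof is correct and follows essentially the same approach as the paper: apply Lemma~\ref{ibyp} in both directions and invoke the fundamental lemma of the calculus of variations (using continuity of $(-\Delta)^s u$ on $U$) to pass from distributional to pointwise vanishing. The paper states this in a single sentence, but your expanded version with the explicit continuity remark is a faithful elaboration.
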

\begin{proof}
	The result follows from Lemma \ref{ibyp} and the fundamental Lemma of calculus of variations.
\end{proof}

\subsection{Green functions}

Let $s>0$ and $\Omega\subset \R^N$ be a smooth open set.  A function $\cG_{\Omega}:\R^N\times \R^N\to\R\cup\{\infty\}$ is called a \textit{Green function of $(-\Delta)^s$ in $\Omega$}, if  $\cG_{\Omega}(x,y)=0$ for $x$ or $y$ in $\R^N\setminus \Omega$ and
\begin{align}\label{Green:prop}
	\int_{\R^N} \cG_{\Omega}(x,y)(-\Delta)^s\varphi(y)\ dy = \varphi(x) \qquad \text{ for all } x\in \Omega \text{ and } \varphi\in C^{\infty}_c(\Omega).
\end{align}
The Green function of $(-\Delta)^s$ in the unitary ball $B$ (see \cite{DG2016,AJS16b}) is given by
\begin{align}\label{green-ball1}
	\cG_B(x,y)&=k_{N,s}|x-y|^{2s-N}\int_0^{\frac{(1-|x|^2)_+(1-|y|^2)_+}{|x-y|^{2}}}\frac{v^{s-1}}{(v+1)^{\frac{N}{2}}}\ dv, \quad x,y\in \R^N,\ x\neq y,
\end{align}
where $k_{N,s}$ is as in \eqref{constants}.

\section{Point inversions}\label{poi:sec}

\begin{lemma}
	Let $N\in\N$, $s>0$, $u\in C^{\infty}_c(\R^N\setminus\{0\})$,  and let $K_s$ as in \eqref{kappa:K:def} with $c=1$ and  $v=0$. Then, for $x\in \R^N\setminus\{0\}$,
	\begin{align}
		x\cdot \n K_s(u)&=(2s-N)K_s(u)- K_s(x\cdot \n u),\label{id1} \\
		\Ds (|x|^2 \D u)&=-|x|^2 (-\D)^{s+1} u+ 4s \Ds( x\cdot \n u)+2s(N-2-2s) \Ds u, \label{id2}\\
		(-\Delta)^{s+1}u(x)&=(-\Delta)^{s}(-\Delta)u(x).\label{ch}
	\end{align}
\end{lemma}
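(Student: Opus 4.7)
The plan is to handle the three identities in turn. Identity \eqref{id1} is a direct chain-rule computation; \eqref{ch} follows immediately from the Fourier characterization \eqref{f:rel}; and \eqref{id2}, the substantive one, I would prove by passing to the Fourier side, since everything is Schwartz class.

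For \eqref{id1}, write $\kappa x=x/|x|^2$ so that $K_s u(x)=|x|^{2s-N}u(\kappa x)$ and apply the Leibniz rule to $x\cdot\n K_s u$. The first factor is positively homogeneous of degree $2s-N$, hence $x\cdot\n|x|^{2s-N}=(2s-N)|x|^{2s-N}$, which gives the first term on the right-hand side. For the second factor, compute the Jacobian $\partial_j\kappa_i(x)=\delta_{ij}/|x|^2-2x_ix_j/|x|^4$ and contract with $x_j$ to get $\sum_j x_j\partial_j\kappa_i(x)=-\kappa_i(x)$. The chain rule then yields $x\cdot\n(u\circ\kappa)(x)=-(\kappa x)\cdot(\n u)(\kappa x)$, and multiplying by $|x|^{2s-N}$ exactly produces $-K_s(x\cdot\n u)(x)$, proving \eqref{id1}.

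For \eqref{ch}, both operators are well defined on $u\in C^\infty_c(\R^N\setminus\{0\})\subset\cS(\R^N)$, and by \eqref{f:rel} their Fourier transforms equal $|\xi|^{2(s+1)}\cF(u)$ and $|\xi|^{2s}\cdot|\xi|^2\cF(u)$ respectively, which coincide; Fourier injectivity closes the argument.

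For \eqref{id2}, I would again Fourier transform, using the conventions $\widehat{x_jf}=i\partial_{\xi_j}\widehat f$ and $\widehat{\partial_jf}=i\xi_j\widehat f$, so that $\widehat{|x|^2f}=-\D_\xi\widehat f$ and $\widehat{x\cdot\n f}=-N\widehat f-\xi\cdot\n_\xi\widehat f$. The Leibniz rule applied to $\D_\xi(|\xi|^2\widehat u)=2N\widehat u+4\xi\cdot\n\widehat u+|\xi|^2\D_\xi\widehat u$ gives
\begin{equation*}
\cF[\Ds(|x|^2\D u)]=|\xi|^{2s}\D_\xi(|\xi|^2\widehat u)=2N|\xi|^{2s}\widehat u+4|\xi|^{2s}\xi\cdot\n\widehat u+|\xi|^{2s+2}\D_\xi\widehat u.
\end{equation*}
On the right-hand side of \eqref{id2}, the first term transforms to $\D_\xi(|\xi|^{2s+2}\widehat u)$ which, using $\D_\xi|\xi|^{2s+2}=(2s+2)(N+2s)|\xi|^{2s}$ and $\n_\xi|\xi|^{2s+2}=(2s+2)|\xi|^{2s}\xi$, expands into three analogous terms; the second term contributes $-4sN|\xi|^{2s}\widehat u-4s|\xi|^{2s}\xi\cdot\n\widehat u$, and the third contributes $2s(N-2-2s)|\xi|^{2s}\widehat u$. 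Matching the coefficients of the three tensor types $|\xi|^{2s}\widehat u$, $|\xi|^{2s}\xi\cdot\n\widehat u$, and $|\xi|^{2s+2}\D_\xi\widehat u$ reduces \eqref{id2} to the scalar identities $(2s+2)(N+2s)-4sN+2s(N-2-2s)=2N$ and $2(2s+2)-4s=4$, both elementary. The only delicate step is the bookkeeping in the first identity, where a fourfold cancellation of $s$-dependent terms must leave exactly $2N$; this is the main (and only) obstacle in the proof.
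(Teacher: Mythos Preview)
Your proof is correct and follows essentially the same route as the paper: \eqref{ch} via \eqref{f:rel}, and \eqref{id2} on the Fourier side by expanding both sides in the basis $|\xi|^{2s}\widehat u$, $|\xi|^{2s}\xi\cdot\nabla\widehat u$, $|\xi|^{2s+2}\Delta_\xi\widehat u$ and matching coefficients. The only cosmetic difference is in \eqref{id1}: the paper packages your chain-rule computation as a one-line scaling argument, differentiating $t\mapsto K_s(u)(tx)=t^{2s-N}|x|^{2s-N}u\bigl(x/(t|x|^2)\bigr)$ at $t=1$, which yields your identity $\sum_j x_j\partial_j\kappa_i=-\kappa_i$ implicitly and is slightly slicker to write down.
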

\begin{proof}
	Identity \eqref{id1} follows from differentiating the function  
	\begin{align*}
		(0,\infty)\ni t\mapsto K_s(u)(tx) = t^{2s-N} |x|^{2s-N} u\left(\frac{x}{t|x|^{2}}\right) 
	\end{align*}
	with respect to $t$ and evaluating at $t=1$. Identity \eqref{ch} follows from \eqref{f:rel}. To show \eqref{id2} we use the Fourier transform of $u$, denoted by $\cF(u)$. First, let $\xi\in\R^N\backslash\{0\}$ and note that
	\begin{align}
		|\xi|^{2(1+s)}\D \cF(u) &= \D (\ |\xi|^{2(1+s)} \cF(u)\ )- 4(1+s)|\xi|^{2s} \xi \cdot\n \cF(u) - 2(1+s)(N+2s)|\xi|^{2s}\cF(u),\label{m1}\\
		\xi \cdot \nabla \cF(u)&=\sum_{j=1}^N \xi_j \frac{\partial}{\partial \xi_j}\cF(u) =\sum_{j=1}^N \xi_j \cF(-ix_j\, u) = i^2\cF(\div(xu))=-N\cF(u)-\cF(x\cdot \n u).\ \ \ \ \ \label{m2}
	\end{align}
	Then
	\begin{align*}
		\cF(\Ds (|x|^2 \D u))&=|\xi|^{2s} \cF(|x|^2 \D u)=|\xi|^{2s}(-\D) \cF (\D u)=|\xi|^{2s} \D (\ |\xi|^2 \cF(u)\ )\\
		&=|\xi|^{2s} \left( 2N \cF(u)+ 4 \xi \cdot\n  \cF(u)+|\xi|^2 \D \cF(u)  \right)\\
		&= |\xi|^{2(1+s)} \D \cF(u) + 4 |\xi|^{2s}\xi \cdot \n \cF(u)+ 2N|\xi|^{2s} \cF(u)\\
		&\stackrel{\eqref{m1}}{=}\D(\ |\xi|^{2(1+s)}  \cF(u)\ )-4s |\xi|^{2s}\xi \cdot \nabla \cF(u)-2s(2+N+2s)|\xi|^{2s} \cF(u)\\
		&\stackrel{\eqref{m2}}{=}\D(\ |\xi|^{2(1+s)}  \cF(u)\ )+4s |\xi|^{2s}\cF(x\cdot \n u)+2s(N-2-2s)|\xi|^{2s}\cF(u)
	\end{align*}
	and \eqref{id2} follows by applying the inverse Fourier transform.
\end{proof}

\begin{prop}\label{kelvin}
	For $s,c>0$, $v\in \R^N$ let $\kappa$ and $K_s$ as in \eqref{kappa:K:def}. Then, for $u\in C^{\infty}_c(\R^N\setminus\{-v\})$ and $x\in \R^N\setminus\{-v\}$,
	\begin{align}\label{kelvin-trafo:1}
		(-\Delta)^s (K_s u)(x)&=c^{2s}\frac{K_s((-\Delta)^su)(x)}{|x+v|^{4s}}, \quad\text{ \emph{i.e.}, }\quad (-\Delta)^s \Big(\frac{u\circ \kappa (x)}{|x+v|^{N-2s}}\Big)=c^{2s}\frac{(-\Delta)^su(\kappa x)}{|x+v|^{N+2s}}.
	\end{align}
\end{prop}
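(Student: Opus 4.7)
The plan is to reduce to the case $c=1$, $v=0$ and then induct on $\lfloor s\rfloor$, using identities \eqref{id1}, \eqref{id2}, and \eqref{ch} as the core algebraic machinery. Translation invariance of $(-\Delta)^s$ takes care of the shift by $v$. For the factor $c$, set $\tilde K_s$ to be the transform with $c=1$ and $u_c(y):=u(cy)$: then $K_s u = \tilde K_s u_c$, and combining with the scaling relation $(-\Delta)^s u_c(y) = c^{2s}((-\Delta)^s u)(cy)$ produces the $c^{2s}$ factor on the right-hand side of \eqref{kelvin-trafo:1}. It therefore suffices to prove $(-\Delta)^s K_s u(x) = |x|^{-4s} K_s((-\Delta)^s u)(x)$ for the standard $\kappa x = x/|x|^2$. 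The base case $s \in (0,1)$ is \cite{BZ06,FW12}, while integer $s\in\N$ is the classical (local) Kelvin identity.

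For the inductive step, assume the identity for $s$ and prove it for $s+1$. The starting observation is the recursion $K_{s+1} u = |x|^2 K_s u$. Using \eqref{ch} and the product rule gives
\[
(-\Delta)^{s+1}(|x|^2 K_s u) = -2N(-\Delta)^s K_s u - 4(-\Delta)^s(x\cdot\nabla K_s u) - (-\Delta)^s(|x|^2 \Delta K_s u).
\]
Identity \eqref{id2} with $f = K_s u$ rewrites $(-\Delta)^s(|x|^2\Delta K_s u)$ in terms of $|x|^2(-\Delta)^{s+1} K_s u$, $(-\Delta)^s(x\cdot\nabla K_s u)$, and $(-\Delta)^s K_s u$; then \eqref{id1} replaces $x\cdot \nabla K_s u$ by $(2s-N)K_s u - K_s(x\cdot\nabla u)$. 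Invoking the inductive hypothesis for $u$ and for $x\cdot\nabla u$ (both smooth and compactly supported away from the origin) converts $(-\Delta)^s K_s u$ and $(-\Delta)^s K_s(x\cdot\nabla u)$ to $|x|^{-4s}K_s((-\Delta)^s u)$ and $|x|^{-4s}K_s((-\Delta)^s(x\cdot\nabla u))$ respectively.

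The remaining ingredient is the ``stray'' term $|x|^2(-\Delta)^{s+1} K_s u$. Using \eqref{ch} and the inductive hypothesis this equals $|x|^2(-\Delta)(|x|^{-4s} K_s((-\Delta)^s u))$, that is, $-\Delta$ of a function of the form $|x|^{-2s-N}\phi(\kappa x)$ with $\phi = (-\Delta)^s u$. A direct computation, generalizing the classical Kelvin identity via $\Delta|x|^\alpha = \alpha(\alpha+N-2)|x|^{\alpha-2}$ and the chain rule for $u\circ\kappa$, yields an expansion into terms $K_{-s-1}\phi$, $K_{-s-1}(x\cdot\nabla\phi)$, and $K_{-s-2}(-\Delta\phi)$; these convert back to powers of $|x|$ times $K_s$ of $(-\Delta)^s u$, $(-\Delta)^s(x\cdot\nabla u)$ (up to a commutator contribution), and $(-\Delta)^{s+1}u$, by exploiting the identity $K_{-t}\psi = |x|^{-4t}K_t\psi$ together with \eqref{ch}. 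The accumulated terms are then collected; the cancellation of everything except $|x|^{-4s-2}K_s((-\Delta)^{s+1}u) = |x|^{-4(s+1)}K_{s+1}((-\Delta)^{s+1} u)$ is dictated precisely by the scaling commutator
\[
(-\Delta)^s (x\cdot\nabla u) = 2s\,(-\Delta)^s u + x\cdot \nabla\bigl((-\Delta)^s u\bigr),
\]
which I would derive by differentiating $(-\Delta)^s(u(\lambda\cdot))(x)=\lambda^{2s}((-\Delta)^s u)(\lambda x)$ at $\lambda=1$. This closes the induction.

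The main obstacle is the combinatorial bookkeeping: the coefficients produced by repeated use of \eqref{id1}, \eqref{id2}, and the Kelvin-type formula for $-\Delta(|x|^\alpha u(\kappa x))$ must conspire, only after invoking the scaling commutator, to cancel in a tight non-obvious way. One must also carefully distinguish $x\cdot\nabla(K_s u)$ from $K_s(x\cdot\nabla u)$ (they are different functions, related by \eqref{id1}) and systematically exploit the telescoping relation $|x|^2 K_t = K_{t+1}$ that links Kelvin transforms of consecutive orders.
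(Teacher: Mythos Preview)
Your approach is correct and shares the same inductive skeleton as the paper's, but the decomposition differs in a way that makes yours more laborious. The paper factors $K_{s+1}u=|x|^{2s}K_1 u$ rather than $|x|^2 K_s u$; after expanding $\Delta(|x|^{2s}K_1 u)$ and using the classical identity $\Delta K_1 u=|x|^{-4}K_1(\Delta u)$ together with \eqref{id1}, the expression collapses to $K_s$ applied to three functions: $u$, $x\cdot\nabla u$, and $|x|^2\Delta u$. The inductive hypothesis then applies directly, and the cancellation is completed by invoking \eqref{id2} \emph{inside} $K_s$ (i.e., for the argument function), so no stray term ever appears and the scaling commutator $(-\Delta)^s(x\cdot\nabla u)=2s(-\Delta)^s u+x\cdot\nabla(-\Delta)^s u$ is never needed.

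By contrast, your factorization $K_{s+1}=|x|^2K_s$ forces you to apply \eqref{id2} to $K_s u$ rather than to $u$, producing the stray term $|x|^2(-\Delta)^{s+1}K_s u$; handling it requires a separate Kelvin-type expansion of $(-\Delta)K_{-s}\phi$ and then the scaling commutator to close the cancellation. I checked the bookkeeping: the coefficients do conspire (the leftover $K_s\phi$ contributions are $-8s(s+1)$ and $+8s(s+1)$, and the $K_s(x\cdot\nabla\phi)$ terms cancel exactly). So your route is sound, but the paper's choice of writing $K_{s+1}=|x|^{2s}K_1$ buys a shorter computation with one fewer auxiliary identity.
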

\begin{proof}
	Since 
	\begin{align*}
		(-\Delta)^s[u(cx+v)]=c^{2s}[(-\Delta)^su](cx+v) \quad\text{ for $s>0$, $c\in \R$, $x,v\in \R^N$, and $u\in C^{\infty}_c(\R^N)$,}
	\end{align*}
	we assume without loss of generality that $v=0$ and $c=1$. We show \eqref{kelvin-trafo:1} by induction.  For $s\in (0,1)$ the claim is known (see, for example, \cite[Proposition A.1]{RS12}) and for $s=1$ the claim follows from a direct computation.  Let $u\in C^{\infty}_c(\R^N\backslash \{0\})$, $s>0$, and assume as induction hypothesis that
	\begin{align}\label{ih}
		(-\D)^{s}  K_{s}(u )& = |x|^{-4s}K_{s}( (-\D)^{s}  u))\qquad \text{ in }\R^N\backslash\{0\}.
	\end{align}
	By \eqref{id1}, \eqref{ch}, \eqref{ih}, and since
	\begin{align*}
		\D K_1(u)=|x|^{-4} K_1(\D u) =|x|^{-2} K_1(|x|^2\D u)\qquad \text{ in }\R^N\backslash\{0\},
	\end{align*}
	it follows that, for $x\in\R^N\backslash\{0\}$,
	\begin{align*}
		& (-\D)^{s+1} K_{s+1}(u )= - \Ds ( \D( |x|^{2s} K_1(u)))\\
		&=-\Ds \Bigl(  2s (N+2s-2)|x|^{2s-2}K_1(u)+ |x|^{2s}\D K_1(u)+4s |x|^{2s-2}x\cdot \n K_1(u)     \Bigr) \\
		&=-\Ds  \Bigl(  2s (-N+2s+2)|x|^{2s-2}K_1(u)+ |x|^{2s}\D K_1(u)-4s |x|^{2s-2}  K_1(x\cdot \n u)      \Bigr)\\
		&=-\Ds  \Bigl(   2s (-N+2s+2) K_s(u)+ |x|^{2s}\D K_1(u)-4s   K_s(x\cdot \n u)      \Bigr)\\
		& =-\Ds \Bigl(  2s (-N+2s+2) K_s(u)+ K_s(|x|^2\D u)-4s   K_s(x\cdot \n u)     \Bigr)\\
		& =   2s (N-2-2s)|x|^{-4s} K_s( \Ds  u)- |x|^{-4s}K_s( \Ds (|x|^2\D u))+4s |x|^{4s}  K_s(\Ds (x\cdot \n u)).
	\end{align*}
	Using  \eqref{id2} and the linearity of $K_s$, we deduce that 
	\begin{align*}
		(-\D)^{s+1}  K_{s+1}(u )& =  |x|^{-4s}K_s(|x|^2(-\D)^{s+1}  u))= |x|^{-4(s+1)}K_{s+1}( (-\D)^{s+1}  u))\qquad \text{ in }\R^N\backslash\{0\},
	\end{align*}
	as claimed.
\end{proof}

\begin{lemma}\label{KsinL1s}
	Let $c>0$, $v\in \R^N$. If $u\in\cL^1_s$ then $K_s u\in\cL^1_s$.
\end{lemma}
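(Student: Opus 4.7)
The plan is to directly estimate the $\cL^1_s$-defining integral for $K_s u$ by applying the change of variables $y=\kappa x$, which is an involution on $\R^N\setminus\{-v\}$ since $|\kappa x+v|=c/|x+v|$ implies $\kappa(\kappa x)=x$. First I would compute the Jacobian: writing $\kappa$ as the composition of the translation $x\mapsto x+v$, the scaled Kelvin inversion $w\mapsto cw/|w|^2$ (which is conformal with Jacobian determinant of absolute value $c^N|w|^{-2N}$), and the translation $z\mapsto z-v$, one gets $|\det D\kappa(x)|=c^N|x+v|^{-2N}$. Combined with $|x+v|=c/|y+v|$, this produces the key identity
\begin{equation*}
|x+v|^{2s-N}\,dx \;=\; c^{2s}\,|y+v|^{-(N+2s)}\,dy.
\end{equation*}

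Next I would apply this substitution to the integral:
\begin{equation*}
\int_{\R^N}\frac{|K_s u(x)|}{1+|x|^{N+2s}}\,dx \;=\; \int_{\R^N}\frac{|x+v|^{2s-N}|u(\kappa x)|}{1+|x|^{N+2s}}\,dx \;=\; \int_{\R^N}\frac{c^{2s}\,|u(y)|}{|y+v|^{N+2s}\bigl(1+|\kappa y|^{N+2s}\bigr)}\,dy.
\end{equation*}
To bound the denominator I would use the elementary comparison $1+|x|^{N+2s}\geq c_1\bigl(1+|x+v|^{N+2s}\bigr)$ for some constant $c_1=c_1(|v|,N,s)>0$ (the ratio of the two sides is a continuous positive function on $\R^N$ tending to $1$ at infinity). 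Substituting $|x+v|=c/|y+v|$ yields
\begin{equation*}
|y+v|^{N+2s}\bigl(1+|\kappa y|^{N+2s}\bigr) \;\geq\; c_1\bigl(|y+v|^{N+2s}+c^{N+2s}\bigr).
\end{equation*}

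Finally I would conclude finiteness by splitting the $y$-integral into the regions $\{|y+v|\leq 1\}$ and $\{|y+v|>1\}$. On the first region the denominator is bounded below by $c_1 c^{N+2s}$ and the local integrability of $u$ finishes the job; on the second region the denominator is comparable to $1+|y|^{N+2s}$ with constants depending on $|v|$, so the tail is controlled by the hypothesis $u\in\cL^1_s$. Local integrability of $K_s u$ is automatic once the weighted integral is finite, since $1+|x|^{N+2s}$ is bounded on compact sets. The only mildly delicate point is the comparison $1+|x|^{N+2s}\asymp 1+|x+v|^{N+2s}$ and the Jacobian computation; conceptually there is no real obstacle, and the whole argument is essentially a conformal change of variables.
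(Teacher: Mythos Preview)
Your proof is correct and takes essentially the same approach as the paper: both perform the change of variables $y=\kappa x$ using the Jacobian $c^N|x+v|^{-2N}$ and the identity $|\kappa x+v|=c/|x+v|$, then bound the resulting weight. The paper manipulates the denominator into $|x|^{N+2s}+\bigl|c\frac{x}{|x|}-|x|v\bigr|^{N+2s}$ after a translation and simply asserts finiteness, whereas you insert the comparison $1+|x|^{N+2s}\asymp 1+|x+v|^{N+2s}$ first and then split into near and far regions; your version is slightly more explicit about why the final integral is finite, but the underlying argument is the same.
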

\begin{proof}
	Recall that the (absolute value of the) Jacobian for $x\mapsto c\frac{x+v}{|x+v|^{2}}-v$ is $c^N|x+v|^{-2N}$. Let $k$ denote possibly different constants, then
	\begin{align*}
		\int_{\R^N}\frac{|K_s u(x)|}{1+|x|^{2s+N}}\ dx
		&=k\int_{\R^N}\frac{|x+v|^{2s-N}|u(\kappa x)|}{1+|x|^{2s+N}}\ dx
		=k\int_{\R^N}|x+v|^{-2N}\frac{|\kappa x+v|^{2s-N}|u(x)|}{1+|\kappa x|^{2s+N}}\ dx\\
		&=k\int_{\R^N}|x+v|^{-2N}\frac{|x+v|^{N-2s}|u(x)|}{1+|c\frac{x+v}{|x+v|^{2}}-v|^{2s+N}}\ dx\\
		&=k\int_{\R^N}\frac{|u(x)|}{|x+v|^{N+2s}+|c\frac{x+v}{|x+v|}-|x+v|v|^{2s+N}}\ dx\\
		&=k\int_{\R^N}\frac{|u(x-v)|}{|x|^{N+2s}+|c\frac{x}{|x|}-|x|v|^{2s+N}}\ dx<\infty.
	\end{align*}
\end{proof}

\begin{proof}[Proof of Proposition \ref{lem:sharm}]
	Formula \eqref{kelvin-trafo} follows from Proposition \ref{kelvin}.  
	Let $U\subset \R^N\setminus\{-v\}$ be an open set, $u\in \cL^1_s$, and $\phi\in C^\infty_c(U)$. Note that $K_s u\in \cL^1_s$, by Lemma \ref{KsinL1s}; furthermore, $K_s\phi\in C_c^\infty(\kappa(U))$, $K_s(K_s u)=c^{2s-N}u$, and $K_s(K_s\phi)=c^{2s-N}\phi$. Then, by Proposition \ref{kelvin},
	\begin{align*}
		& \int_{\R^N}u(x)\,\Ds\phi(x)\ dx\ =\ c^{2N-4s}\int_{\R^N}K_s(K_su)(x)\,\Ds K_s(K_s\phi)(x)\ dx \\
		& =c^{2N-2s}\int_{\R^N}|x+v|^{-2N}\,K_su(\kappa x)\,\Ds K_s\phi(\kappa x)\ dx\ 
		=\ c^{N-2s}\int_{\R^N}\,K_su(y)\,\Ds K_s\phi(y)\ dy.
	\end{align*}
	As a consequence, $u$ is distributionally $s$-harmonic in $U$ if and only if $K_su$ is distributionally $s$-harmonic in $\kappa(U)$, as claimed.
\end{proof}

\subsection{Uniqueness}\label{rep:sec}

Now we can use $K_s$ (as in \eqref{kappa:K:def} with $c=2$ and $v=e_1$) to establish uniqueness of solutions to homogeneous problems in $\Rp$.

\begin{lemma}\label{lem:bvpN}
	Let $N\in \N$, $m\in\N_0$, $\beta,\sigma\in(0,1)$, and $s=m+\sigma$. Let $u\in C^{2s+\beta}(\Rp)\cap\cL^1_s$ satisfy $(-\Delta)^su=0$ in $\Rp$, $u=0$ on $\R^N\setminus \Rp$, and assume there is $C>0$ and $\alpha\in(0,1)$ such that
	\begin{align*}
		|u(x)|\leq C \frac{\max\{x_1^{s-1+\alpha},x_1^s\}}{1+|x|^{N}}\qquad \text{ for }x\in\Rp.
	\end{align*}
	Then $D^{k+\sigma-1}u=0$ in $\partial \Rp$ for $k=0,\ldots,m$ and $u\equiv0$ on $\R^N$.
\end{lemma}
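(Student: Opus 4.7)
The strategy is to transplant the problem from the half-space to the unit ball $B$ via the point inversion $\kappa x = 2(x+e_1)/|x+e_1|^2 - e_1$ from Section \ref{poi:sec} (the instance of \eqref{kappa:K:def} with $c=2$, $v=e_1$), which is an involution exchanging $\Rp$ and $B$ and whose lift $K_s$ preserves $s$-harmonicity by Proposition \ref{lem:sharm}. One can then reduce to a uniqueness result already available for the ball in \cite{AJS17a}. I would first observe that once $u\equiv 0$ on $\R^N$ is established, the vanishing of each trace $D^{k+\sigma-1}u$ is immediate from the definition \eqref{trace}, so the task reduces to proving $u\equiv 0$ on $\Rp$.

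Set $v:=K_s u$, so $v(y)=|y+e_1|^{2s-N}u(\kappa y)$ for $y\neq -e_1$. By Lemma \ref{KsinL1s} we have $v\in\cL^1_s$, and by Proposition \ref{lem:sharm} combined with Lemma \ref{ptod} (applicable because $\kappa:B\setminus\{-e_1\}\to\Rp$ is a smooth diffeomorphism and $u\in C^{2s+\beta}(\Rp)$), the function $v$ is $s$-harmonic in $B$ both distributionally and pointwisely. Since $\kappa$ sends $\R^N\setminus\overline{\Rp}$ onto $\R^N\setminus\overline{B}$ and $u$ vanishes off $\Rp$, we have $v\equiv 0$ on $\R^N\setminus\overline{B}$.

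Next I would translate the growth hypothesis on $u$ into a boundary estimate for $v$ on $B$. Using the elementary identities
\[
(\kappa y)_1=\frac{1-|y|^2}{|y+e_1|^2},\qquad |\kappa y+e_1|=\frac{2}{|y+e_1|},\qquad |\kappa y|\asymp\frac{1}{|y+e_1|}\ \text{as } y\to -e_1,
\]
a direct computation distinguishing the cases $(\kappa y)_1\leq 1$ (where the $x_1^{s-1+\alpha}$ part of the hypothesis governs) and $(\kappa y)_1>1$ (which corresponds to $y$ near $-e_1$, and where the $x_1^s/(1+|x|^N)$ part governs) yields, for $y\in B$,
\[
|v(y)|\leq C\,\dist(y,\partial B)^{s-1+\alpha}\ \text{on } \overline B\setminus\{-e_1\},
\]
and $v(y)\to 0$ as $y\to -e_1$. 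Thus $v$ extends continuously by zero at $-e_1$, is $s$-harmonic in $B$, vanishes outside $B$, and decays at $\partial B$ at least like $\dist(y,\partial B)^{s-1+\alpha}$. Applying the uniqueness statement for Dirichlet-type problems on the ball from \cite{AJS17a}, which rules out the large $s$-harmonic functions, one concludes $v\equiv 0$ on $B$, hence $u\equiv 0$ on $\Rp$, and together with the assumed vanishing on the complement, $u\equiv 0$ on $\R^N$.

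The main obstacle is the careful boundary analysis of $v$ near the \emph{image of infinity} $y=-e_1$, where $\kappa$ is singular: one must combine both terms of the hypothesis so that the slow decay $x_1^{s-1+\alpha}$ near $\partial\Rp$ does not translate into singular behaviour of $v$ at $-e_1$ (where the $|y+e_1|^{2s-N}$ weight could otherwise blow up), the $x_1^s/(1+|x|^N)$ decay at infinity being precisely what cancels the singular weight and ensures the bound obtained is compatible with the form required by the cited ball uniqueness theorem.
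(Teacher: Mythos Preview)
Your proposal is correct and follows essentially the same route as the paper: apply $K_s$ with $c=2$, $v=e_1$ to move the problem to the ball, use Lemma \ref{KsinL1s} and Proposition \ref{lem:sharm} to get an $s$-harmonic $\cL^1_s$ function vanishing outside $B$, convert the growth hypothesis into a bound $|v(y)|\le C\,(1-|y|^2)^{s-1+\alpha}$ on $B$, and conclude via \cite[Theorem 1.5]{AJS17a}. The paper carries out the boundary estimate in one unified computation (after replacing $1+|x|^N$ by the comparable $1+|x+e_1|^N$) rather than splitting into the two regimes you describe, but the content is the same; note also that $-e_1\in\partial B$, not $B$, so no separate treatment of that point is strictly needed.
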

\begin{proof}
	Let $u$ be as stated and note that $w=K_su$ is in $C^{2s+\beta}(B)\cap \cL^1_s$, by Lemma \ref{KsinL1s} (here $K_s$ is as in \eqref{kappa:K:def} with $c=2$ and $v=e_1$).  For $x\in\overline{B}$ put $\delta(x):=(1-|x|^2)$. 
	Note that $(\kappa x)_1 = \frac{\delta(x)}{|x+e_1|^2}$ for all $x\in\R^N\backslash \{-e_1\}$. Moreover, by adjusting the constant $C$, we also have
	\[
	|u(x)|\leq C \frac{\max\{x_1^{s-1+\alpha},x_1^s\}}{1+|x+e_1|^{N}}\qquad \text{ for }x\in\Rp.
	\]
	Then, using the monotonicity of the Kelvin transform, we have for $x\in\overline{B}\backslash\{-e_1\}$,
	\begin{align*}
		|w(x)|&\leq C |x+e_1|^{2s-N}\frac{
			\max\{|(\kappa x)_1|^{s-1+\alpha},|(\kappa x)_1|^s\}
		}{1+|\kappa x +e_1|^N} \\
		&= C \frac{ \max\{\delta(x)^{s-1+\alpha}|x+e_1|^{2-N-2\alpha},|x+e_1|^{-N}\delta(x)^s\}}{1+2^N|x+e_1|^{-N}}\\
		& =C \frac{ \max\{\delta(x)^{s-1+\alpha}|x+e_1|^{2-2\alpha},\delta(x)^s\}}{|x+e_1|^{N}+2^N}
		\leq \frac{C}{2^{N}} \max\{\delta(x)^{s-1+\alpha},\delta(x)^s\}=\frac{C}{2^{N}}\delta(x)^{s-1+\alpha}.
	\end{align*}
	Since $w$ is $s$-harmonic in $B$, by Proposition \ref{lem:sharm}, and satisfies that $w=0$ in $\R^N\backslash \overline{B}$ and $|w(x)|\leq C\delta(x)^{s-1+\alpha}$ for $x\in\overline{B}$, we have that $w\equiv 0$, by \cite[Theorem 1.5]{AJS17a}. Then $u(x)=2^{N-2s}K_s(w)(x)=0$ for $x\in \R^N\backslash\{-e_1\}$, and the statement follows.
\end{proof}

\section{Explicit kernels}\label{E:k:sec}

\subsection{The nonlocal Poisson kernel}

\begin{proof}[Proof of Theorem \ref{thm:Poisson}]
	Let $g\in L^1(\R^N)$ with compact support in $\R^N\backslash \overline{\Rp}$, $B:=B_1(0)$, $K_s$ and $\kappa$ be given by \eqref{kappa:K:def} with $v=e_1$, $c=2$, and let $\gamma_{N,\sigma}$ as in \eqref{constants}. Note that $K_sg\in \cL^1_s$, by Lemma \ref{KsinL1s}, and $K_sg=0$ on $\R^N\setminus \overline{B_{1+\tilde r}(0)}$ for some $\tilde r\in(0,1)$. Hence, by \cite[Theorem 1.1]{AJS17a}, the function $v:\R^N\to \R$ given by
	\begin{align}\label{v:def}
		v(x)=(-1)^m\gamma_{N,\sigma}2^{N-2s}\int_{\R^N\setminus B} \frac{(1-|x|^2)_+^s}{(|y|^2-1)^s|x-y|^{N}}K_sg(y)\ dy+2^{N-2s}K_sg(x)\chi_{\R^N\setminus B}(x)
	\end{align}
	satisfies that $v\in C^{\infty}(B)\cap C^{s}_0(B)\cap \cL^1_s$ and $(-\Delta)^s v(x)=0$ for every $x\in B=\kappa(\Rp)$.
	
	Recall that the absolute value of the Jacobian for $y\mapsto \kappa y$ is $2^N|x+v|^{-2N}$ and, for $x,y\in \R^N\setminus\{-e_1\}$,
	\begin{align}\label{he}
		|\kappa x-\kappa y|=\frac{2|x-y|}{|x+e_1||y+e_1|}, \qquad 1-|\kappa x|^2=2^{2}\frac{x_1}{|x+e_1|^{2}},\qquad 
		|\kappa y+e_1|=\frac{2}{|y+e_1|}.
	\end{align}
	We claim that $u=K_s v$ in $\R^N\backslash\{-e_1\}$.  Indeed, for $x\in \R^N\setminus \Rp=\kappa(\R^N\setminus B)$ we have that $K_s v(x)=2^{N-2s}K_s(K_sg)(x)=2^{N-2s}2^{2s-N}g(x)=u(x)$. Then, using \eqref{he}, we have for $x\in \Rp{\backslash \{-e_1\}}$ that
	\begin{align*}
		K_s&(v)(x)=(-1)^m\gamma_{N,\sigma}2^{N-2s}|x+e_1|^{2s-N}\int_{\R^N\setminus B} \frac{(1-|\kappa x|^2)^s}{(|y|^2-1)^s|\kappa x-y|^{N}}K_sg(y)\ dy\\
		&=(-1)^m\gamma_{N,\sigma}2^{N-2s}|x+e_1|^{2s-N}\frac{{2^{2s}}x_1^s}{|x+e_1|^{2s}}\int_{\R^N\setminus B} \frac{|y+e_1|^{2s-N}}{(|y|^2-1)^s|\kappa x-y|^{N}}g(\kappa y)\ dy\\
		&=(-1)^m\gamma_{N,\sigma}2^{2N}|x+e_1|^{-N}x_1^s\int_{\R^N\setminus  \Rp} \frac{|\kappa y+e_1|^{2s-N}}{(|\kappa y|^2-1)^s|\kappa x-\kappa y|^{N}}|y+e_1|^{-2N}g(y)\ dy\\
		&=(-1)^m\gamma_{N,\sigma}2^{2N}|x+e_1|^{-N}x_1^s\int_{\R^N\setminus \Rp} \frac{{2^{2s-N}|y+e_1|^{N-2s}|y+e_1|^{2s}|x+e_1|^N|y+e_1|^N}}{2^{2s}(-y_1)^s{2^N}|x-y|^{N}| y+e_1|^{2N}}g(y)\ dy\\
		&=(-1)^m\gamma_{N,\sigma}\int_{\R^N\setminus \Rp} \frac{x_1^s}{(-y_1)^s|x-y|^{N}}g(y)\ dy=u(x).
	\end{align*}
	Therefore $u=K_s v$ in $\R^N\backslash\{-e_1\}$. Since $\kappa:\R^N\backslash\{-e_1\}\to \R^N$ is a smooth transformation we have that $u=K_s v\in {C^{\infty}}(\Rp)$ and, by Lemma \ref{KsinL1s}, $u=K_s v\in \cL^1_s$. By Proposition \ref{lem:sharm} and Lemma \ref{ptod}, it follows that $(-\Delta)^s u(x)=0$ for every $x\in \Rp$.  Finally, to see that $u\in C^{s}_0(\Rp)$ and \eqref{bound}, note that $|v(x)|\leq C (1-|x|^2)^s$ for $x\in B$ and for some $C>0$ (depending on $g$, $N$, and $s$); but then, by linearity,
	\begin{align}\label{st}
		0\leq K_s(C (1-|x|^2)^s-|v(x)|)=|x+e_1|^{2s-N}C\frac{4^{s}x_1^s}{|x+e_1|^{2s}}-|u(x)|\qquad \text{ for }x\in \Rp
	\end{align}
	and thus $|u(x)|\leq C4^{s}|x+e_1|^{-N}x_1^s$ for $x\in\Rp$ which implies \eqref{bound}. Note that {$u$} is the unique solution of \eqref{eq0} satisfying \eqref{bound}, by Lemma \ref{lem:bvpN}.
	
	We now argue \eqref{cor:bound}. Assume in addition that $g$ is nonnegative and let
	$u=K_s v$ in $\R^N\backslash\{-e_1\}$ where $v$ is given by \eqref{v:def}, which implies that $(-1)^mu>0$ in $\Rp$, since $\gamma_{N,\sigma}>0$ by \eqref{constants} and $K_s$ preserves positivity. The upper bound in \eqref{cor:bound} follows form {\eqref{st}} and the lower bound can be argued similarly since, for $x\in B$,
	\begin{align*}
		(-1)^m v(x)=\gamma_{N,\sigma}2^{N-2s}(1-|x|^2)^s\int_{\R^N\setminus \overline{B_{1+\tilde r}}} \frac{K_sg(y)}{(|y|^2-1)^s|x-y|^{N}}\ dy \geq c (1-|x|^2)^s
	\end{align*}
	where $ c=\inf_{x\in \overline{B}}\int_{V} \frac{K_sg(y)}{(|y|^2-1)^s|x-y|^{N}}\ dy>0$ with $V:=\R^N\setminus \overline {B_{1+\tilde r}}$.
\end{proof}

\subsection{The Green function}\label{green:sec}

We now show an extension of \cite[Theorem 2]{BZ06} to general $s>0$.
\begin{prop}\label{kelvin-green}
	Let $s,c>0$, $v\in \R^N$, and $D\subset\R^N\setminus\{-v\}$ be an open set and denote $U:=\kappa(D)$. Let $\cG_D$ be a Green function of $(-\Delta)^s$ in $D$. Then $\cG_U:\R^N\times \R^N\to\R$ given by 
	\begin{align*}
		\cG_U(x,y)=\Big(\ \frac{c}{|\kappa x+v||\kappa y+v|}\ \Big)^{2s-N}\cG_D(\kappa x,\kappa y),\qquad x,y\in \R^N,\ x\neq y
	\end{align*}
	is a Green function of $(-\Delta)^s$ in $U$.
\end{prop}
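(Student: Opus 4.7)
The plan is to verify the two defining conditions of a Green function directly, exploiting the point-inversion symmetry encoded in Proposition \ref{kelvin} together with the involution property of $\kappa$. First I would record the elementary identities governing the change of variables $y=\kappa z$: since $\kappa$ is an involution of $\R^N\setminus\{-v\}$, one computes $|\kappa z + v| = c/|z+v|$, $\kappa(\kappa z)=z$, and $|\det D\kappa(z)| = c^N|z+v|^{-2N}$. The boundary vanishing $\cG_U(x,y)=0$ whenever $x$ or $y$ lies in $\R^N\setminus U$ is then immediate, because $\kappa$ is a bijection of $\R^N\setminus\{-v\}$ sending $U$ to $D$ and its complement (minus $\{-v\}$) to the complement of $D$, so one of the arguments of $\cG_D(\kappa x,\kappa y)$ falls outside $D$.

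Second, for the defining integral identity, I would fix $\varphi\in C^\infty_c(U)$ and $x\in U$, and introduce the auxiliary function $\psi := K_s\varphi$, i.e.\ $\psi(z)=|z+v|^{2s-N}\varphi(\kappa z)$. Because $\kappa$ is smooth on $\R^N\setminus\{-v\}$ and $-v\notin\overline{U}$, one has $\psi\in C^\infty_c(D)$ with $\supp\psi = \kappa(\supp\varphi)$ compact inside $D$. I would then compute $\int_{\R^N}\cG_U(x,y)(-\Delta)^s\varphi(y)\,dy$ by substituting $y=\kappa z$, using the definition of $\cG_U$, the Jacobian factor $c^N|z+v|^{-2N}$, and the pointwise identity
$$(-\Delta)^s\varphi(\kappa z) \;=\; c^{-2s}\,|z+v|^{N+2s}\,(-\Delta)^s\psi(z)$$
which is just Proposition \ref{kelvin} rearranged. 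The exponents of $|z+v|$ and of $c$ then balance to zero (this is precisely what forces the power $2s-N$ in the definition of $\cG_U$), leaving
$$|\kappa x+v|^{N-2s}\int_{\R^N}\cG_D(\kappa x,z)\,(-\Delta)^s\psi(z)\,dz.$$

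Finally, since $\kappa x\in D$ and $\psi\in C^\infty_c(D)$, the Green function property \eqref{Green:prop} of $\cG_D$ produces $\psi(\kappa x)=|\kappa x+v|^{2s-N}\varphi(\kappa(\kappa x))=|\kappa x+v|^{2s-N}\varphi(x)$, which cancels the prefactor and yields $\varphi(x)$, as desired. The main obstacle, and essentially the only nontrivial point, is the careful bookkeeping of the three weight factors $|z+v|$, $|\kappa x+v|$, and $c$ during the change of variables; everything else is either the involution identity for $\kappa$, the covariance formula from Proposition \ref{kelvin}, or the fact that $K_s$ preserves $C^\infty_c$ away from $-v$. One minor verification worth flagging is that the integral can be legitimately split and differentiated in the sense of \eqref{Green:prop}: this is fine because $\varphi$ (and hence $\psi$) is compactly supported and $\cG_D$ is locally integrable with the required duality property built in.
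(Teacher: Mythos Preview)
Your proposal is correct and follows essentially the same route as the paper's proof: introduce $\psi=K_s\varphi\in C^\infty_c(D)$, perform the change of variables $y=\kappa z$ with Jacobian $c^N|z+v|^{-2N}$, invoke the covariance formula from Proposition \ref{kelvin} to rewrite $(-\Delta)^s\varphi(\kappa z)$ in terms of $(-\Delta)^s\psi(z)$, and then apply the Green function property \eqref{Green:prop} of $\cG_D$ at the point $\kappa x$. Your explicit remark on the boundary vanishing condition is a small addition the paper leaves implicit, but otherwise the arguments coincide step by step.
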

\begin{proof}
	Let $\varphi\in C^{\infty}_c(U)$ and note that $K_s\varphi\in C^{\infty}_c(D)$. Note that $|\det J_\kappa(z)|=c^N|z+v|^{-2N}$ for $z\in \R^N\setminus\{-v\}$. Similarly, as in the proof of Proposition \ref{lem:sharm} it follows that
	\begin{align*}
		&\int_U\cG_U(x,y)(-\Delta)^s\varphi(y)\ dy
		=\int_D\cG_U(x,\kappa y)(-\Delta)^s\varphi(\kappa y)\frac{c^{N}}{|y+v|^{2N}}\ dy\\
		& =\int_D\cG_U(x,\kappa y)c^{2s}\frac{(-\Delta)^s\varphi(\kappa y)}{|y+v|^{N+2s}}\frac{c^{N-2s}}{|y+v|^{N-2s}}\ dy \\
		& =\int_D\cG_U(x,\kappa y) (-\Delta)^s \Big(\frac{\varphi\circ \kappa (y)}{|y+v|^{N-2s}}\Big) \frac{c^{N-2s}}{|y+v|^{N-2s}}\ dy\\
		&=\frac{1}{|\kappa x+v|^{2s-N}}\int_D  \cG_D(\kappa x,y) (-\Delta)^s \Big(\frac{\varphi\circ \kappa (y)}{|y+v|^{N-2s}}\Big)\ dy
		=\frac{1}{|\kappa x+v|^{2s-N}} \Big(\frac{\varphi\circ \kappa (\kappa x)}{|\kappa x+v|^{N-2s}}\Big)=\varphi(x),
	\end{align*}
	which shows that $\cG_U$ is a Green function for $(-\Delta)^s$ in $U$.
\end{proof}

\begin{cor}\label{green-half}
	For $s>0$ a Green function of $(-\Delta)^s$ in $\Rp:=\{x\in \R^N\;:\;x_1>0\}$ is given by \eqref{greenhs0}.
\end{cor}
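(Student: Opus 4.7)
The plan is to apply Proposition~\ref{kelvin-green} directly with $c=2$, $v=e_1$, and $D=B$ (the unit ball). Since the point inversion $\kappa$ defined by \eqref{kappa:K:def} with these parameters maps $B$ conformally onto $\Rp$ (and $-e_1 \notin B$, so the hypothesis $D \subset \R^N\setminus\{-v\}$ is satisfied), Proposition~\ref{kelvin-green} produces a Green function on $U = \kappa(B) = \Rp$ from the explicit ball Green function \eqref{green-ball1}. Thus the work reduces to a bookkeeping computation showing that the resulting expression coincides with \eqref{greenhs0}.

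First I would record the transformation identities for $x,y\in\Rp$, which are essentially those in \eqref{he}: $|\kappa x + e_1| = 2|x+e_1|^{-1}$, analogously for $y$, $|\kappa x - \kappa y| = 2|x-y|(|x+e_1||y+e_1|)^{-1}$, and $1-|\kappa x|^2 = 4x_1|x+e_1|^{-2}$, $1-|\kappa y|^2 = 4y_1|y+e_1|^{-2}$. Then the prefactor of Proposition~\ref{kelvin-green} becomes
\begin{equation*}
\Big(\tfrac{2}{|\kappa x+e_1||\kappa y+e_1|}\Big)^{2s-N} = \Big(\tfrac{|x+e_1||y+e_1|}{2}\Big)^{2s-N},
\end{equation*}
and this combines multiplicatively with $|\kappa x - \kappa y|^{2s-N}$ appearing inside $\cG_B(\kappa x, \kappa y)$ to yield exactly $|x-y|^{2s-N}$, with all factors of $|x+e_1|$ and $|y+e_1|$ canceling.

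Next I would compute the upper limit of the integral defining $\cG_B$:
\begin{equation*}
\psi_B(\kappa x, \kappa y) = \frac{(1-|\kappa x|^2)(1-|\kappa y|^2)}{|\kappa x-\kappa y|^2} = \frac{(4x_1|x+e_1|^{-2})(4y_1|y+e_1|^{-2})}{4|x-y|^2(|x+e_1||y+e_1|)^{-2}} = \frac{4x_1 y_1}{|x-y|^2},
\end{equation*}
which equals $\psi(x,y)$ from \eqref{greenhs0} since $x_1^+ = x_1$ and $y_1^+ = y_1$ on $\Rp$. Combining everything yields
\begin{equation*}
\cG_{\Rp}(x,y) = k_{N,s}|x-y|^{2s-N}\int_0^{\psi(x,y)} \frac{v^{s-1}}{(v+1)^{N/2}}\,dv = \cG_s(x,y)
\end{equation*}
for $x,y\in\Rp$. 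For $x$ or $y$ outside $\Rp$, one of $x_1^+$, $y_1^+$ vanishes, forcing $\psi(x,y)=0$, so \eqref{greenhs0} automatically reproduces the boundary vanishing $\cG_s(x,y)=0$ required in the definition of a Green function.

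There is no substantive obstacle here: the proof is essentially mechanical once Proposition~\ref{kelvin-green} is applied. The only mild care required is the matching of the algebraic identities \eqref{he} with the ball kernel formula, and the verification that the $(x_1)^+$, $(y_1)^+$ truncations in \eqref{greenhs0} correctly encode the vanishing of $\cG_{\Rp}$ off the half-space.
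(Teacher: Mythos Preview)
Your proposal is correct and follows exactly the same approach as the paper: apply Proposition~\ref{kelvin-green} with $c=2$, $v=e_1$, $D=B$, and then carry out the direct calculation (which the paper leaves implicit) using the identities \eqref{he}. Your algebra for the prefactor, the term $|\kappa x-\kappa y|^{2s-N}$, and the upper limit $\psi_B(\kappa x,\kappa y)$ is accurate, and the observation about the $(x_1)^+,(y_1)^+$ truncations handling the off-diagonal vanishing is a nice touch that the paper omits.
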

\begin{proof}
	Let $\kappa= 2\frac{x+e_1}{|x+e_1|^{2}}-e_1$ (i.e. $c=2$, $v=e_1$), then $\kappa(B)=\Rp=\{x\in \R^N\;:\; x_1>0\}$ and the claim follows from Proposition \ref{kelvin-green} and a simple direct calculation.
\end{proof}

\begin{remark}
	Let $f,g\geq 0$ be functions defined on the same set $D$.
	We write $f\simeq g$ if there is $c>0$ such that $\frac1c g(x)\leq f(x)\leq cg(x)$ for all $x\in D$ and define $d(x):=1-|x|$. Recall the definition of the Green function of the ball $G_B$ given by \eqref{green-ball1}. In $\overline{B}\times \overline{B}$ we have
	\begin{align*}
		\cG_B(x,y)\simeq \left\{\begin{aligned}
			&|x-y|^{2s-N}\min\Big\{1,\frac{d(x)^sd(y)^s}{|x-y|^{2s}}\Big\},&& \quad \text{ if $N>2s$,}\\
			&\ln\Big(1+\frac{d(x)^sd(y)^s}{|x-y|^{2s}}\Big),&& \quad \text{ if $N=2s$,}\\
			&d(x)^{s-\frac{N}{2}}d(y)^{s-\frac{N}{2}}\min\left\{1,\frac{d(x)^{\frac{N}{2}}d(y)^{\frac{N}{2}}}{|x-y|^{N}}\right\},&& \quad \text{ if $N<2s$.}\\
		\end{aligned}\right.
	\end{align*}
	These types of estimates are known if $s\in\N\cup(0,1)$, see, for example, \cite{CS98,GGS10}. We refer to \cite[Theorem 4.6]{GGS10}, where the case $s\in\N$ is considered, but the same proof can be used for any $s>0$.  Using Proposition \ref{kelvin-green} and that, for $s,t\geq0$, $\lambda>0$, $x,y\in \Rp$,
	\begin{align*}
		\min\{s,t\}\simeq \frac{st}{s+t},\qquad |x-\bar y|^2-|x-y|^2=4x_ny_n,\qquad 
		\ln(1+t^\lambda) \simeq (\min\{1, t^\lambda\})\ln(2+t),
	\end{align*}
	where $\bar y:=(-y_1,\ldots,y_N)$ for $y=(y_1,\ldots,y_N)\in\R^N,$ one can deduce the estimates
	\begin{align}\label{Green:est}
		\cG_s(x,y)\simeq \left\{\begin{aligned}
			&\frac{(x_1y_1)^s}{|x-y|^{N-2s}|x-\bar y|^{2s}},&& \quad \text{ if $N>2s$,}\\
			&\frac{(x_1y_1)^s}{|x-\bar y|^{2s}}\ln\Big(1+\frac{|x-\bar y|^2}{|x-y|^{s}}\Big),&& \quad \text{ if $N=2s$,}\\
			&\frac{(x_1y_1)^s}{|x-y|^{N}},&& \quad \text{ if $N<2s$}\\
		\end{aligned}\right.
	\end{align}
	in $\Rp\times\Rp$, see for example \cite[Corollary 2.5]{BMZ04}.
\end{remark}

We now show Theorem \ref{dist:sol:l}.
\begin{proof}[Proof of Theorem \ref{dist:sol:l}]
	Let $\varphi\in C^\infty_c(\R^N)$, then, by Corollary \ref{green-half}, \eqref{Green:prop}, and the Fubini's theorem,
	\begin{align*}
		\int_{\Rp} u(x)\Ds\varphi(x)\ dx &
		= \int_{\Rp} \int_{\Rp} \cG_s(x,y)f(y)\ dy\Ds\varphi(x)\ dx\\
		&= \int_{\Rp} f(y)\int_{\Rp} \cG_s(x,y)\Ds\varphi(x)\ dx\ dy=\int_{\Rp} f(x)\varphi(x)\ dx.
	\end{align*}
	Therefore $u$ is a distributional solution of \eqref{eq}.  To see that $u\in C^{2s+\beta}(\Rp)$, let  $x\in\R^N\backslash\{-e_1\}$ and $\kappa x=2\frac{x+e_1}{|x+e_1|^2}-e_1$, then (see Proposition \ref{kelvin-green}) we have that
	\begin{align*}
		\cG_s(x,y) = \Big(\ \frac{2}{|\kappa x+e_1||\kappa y+e_1|}\ \Big)^{2s-N}\cG_B(\kappa x,\kappa y),\qquad x,y\in \R^N,\ x\neq y,
	\end{align*}
	where $\cG_B$ is the Green function of the ball. 
	Then, for $x\in \Rp$,
	\begin{align}
		u(x)&=\int_{\Rp}\Big(\ \frac{2}{|\kappa x+e_1||\kappa y+e_1|}\ \Big)^{2s-N}\cG_B(\kappa x,\kappa y)f(y)\ dy\nonumber\\
		&=\int_{B}\Big(\ \frac{2}{|\kappa x+e_1||y+e_1|}\ \Big)^{2s-N}\cG_B(\kappa x, y)f(\kappa y)2^{N}|y+e_1|^{-2N}\ dy\nonumber\\
		&=2^{2s}|\kappa x+e_1|^{N-2s}\int_{B}\cG_B(\kappa x, y)\frac{f(\kappa y)}{|y+e_1|^{N+2s}}\ dy.\label{equal}
	\end{align}
	Since $y\mapsto \frac{f(\kappa y)}{|y+e_1|^{N+2s}}$ is a function in $C^\beta_c(B)$, we have by \cite[Theorem 1.1]{AJS16b} (see also \cite[Theorem 1]{DG2016}) that
	\begin{align*}
		x\mapsto \int_{B}\cG_B(x, y)\frac{f(\kappa y)}{|y+e_1|^{N+2s}}\ dy\in C^{2s+\beta'}(B) \quad\text{ for $\beta'<\beta$ ($\beta'=\beta$ is allowed, if $2s+\beta\notin\N$),}
	\end{align*}
	which implies, by \eqref{equal}, that $u\in C^{2s+\beta'}(\Rp)$ , $\beta'<\beta$ (where again $\beta'=\beta$ is allowed, if $2s+\beta\notin\N$). Finally, we argue the estimates \eqref{es}, which follow from \eqref{Green:est} and the fact that $f\in C^\infty_c(\Rp)$: for instance, let $N<2s$, $K:=\operatorname{supp}(f)\subset \Rp$, $d:=\operatorname{dist}(\partial \Rp,K)/2$,  then, by \eqref{Green:est}, there are $C_1(N)=C_1>0$ and $C(K)=C>0$ such that
	\begin{align*}
		\cG_s(x,y)\leq C_1\frac{(x_1y_1)^s}{|x-y|^{N}} \leq C \frac{x_1^s}{1+|x|^{N}}\qquad \text{ for all }y\in K,\ x\in A:=\{z\in \Rp\::\: \operatorname{dist}(z,K)>d\}.
	\end{align*}
	Note also that $\Rp\backslash A\subset  \Rp$ is compact and that $u$ is continuous (and therefore bounded) on $\overline{\Rp\backslash A}$. Then there is $C'(K)=C'>0$ such that
	\begin{align*}
		|u(x)|\leq \int_{\Rp} \cG_s(x,y)|f(y)|\ dy\leq C'\int_{K}|f(y)|\ dy\ \frac{x_1^s}{1+|x|^{N}}\qquad \text{for all }x\in\Rp.
	\end{align*}
	The cases $N=2s$ and $N>2s$ follow similarly, and thus \eqref{es} holds. Finally, the uniqueness follows from Lemma \ref{lem:bvpN}.
\end{proof}

\begin{remark}\label{remark:crucial}
	In Theorem \ref{dist:sol:l}, if $f\in C_c^\beta(\Rp)$ and $2s+\beta\in \N$, then $u\in C^{2s+\beta'}(B)$ for $\beta'<\beta$ (by Theorem \ref{dist:sol:l} and because $C_c^{\beta}(\Rp)\subset C_c^{\beta'}(\Rp)$), but one cannot guarantee, in general, that $u\in C^{2s+\beta}(\Rp)$, see e.g. \cite{G15:2} or the recent work \cite{GKL18}, where some counterexamples are shown.
\end{remark}

\subsection{One-dimensional \texorpdfstring{$s$}{s}-harmonic functions}\label{sharm:sec}

In this subsection we show Lemma \ref{lem:ldh} and Proposition~\ref{s-harmonic2}.
We mention that a similar result to Lemma~\ref{lem:ldh} is remarked in \cite[proof of Theorem 3]{PSV13} for $s\in(0,1)$.
\begin{proof}[Proof of Lemma \ref{lem:ldh}]
	We show first that 
	\begin{align}\label{equivs}
		\text{$v\in \cL^1_s(\R^k)\cap C^{2s+\beta}(V)$ if and only if $u\in\cL^1_s(\R^N)\cap C^{2s+\beta}(U)$.}
	\end{align}
	Let $A:=B_1(0)\subset \R^k$ and $B:=B_1(0)\subset \R^{N-k}$, then
	\begin{align*}
		\int_{\R^N\setminus A\times B}&\frac{|u(x)|}{1+|x|^{N+2s}}\ dx 
		\leq \int_{\R^k\setminus A}|v(y)| \omega_{N-k}\int_{1}^{\infty} \frac{r^{N-k-1}}{(|y|^2+r^2)^{\frac{N}{2}+s}}\ dr\ dy\\
		&\leq \int_{\R^k\setminus A}\frac{|v(y)|}{|y|^{N+2s}} \omega_{N-k}\int_{1}^{\infty} \frac{r^{N-k-1}}{(1+(r/|y|)^2)^{\frac{N}{2}+s}}\ dr\ dy
		\leq  \omega_{N-k}\int_{0}^{\infty} \frac{t^{N-k-1}}{(1+t^2)^{\frac{N}{2}+s}}\ dt \int_{\R^k\setminus A}\frac{|v(y)|}{|y|^{k+2s}}\ dy.
	\end{align*}
	and
	\begin{align*}
		&\int_{\R^N\setminus A\times B}\frac{|u(x)|}{1+|x|^{N+2s}}\ dx = \int_{\R^k\setminus A}|v(y)| \omega_{N-k}\int_{1}^{\infty} \frac{r^{N-k-1}}{1+(|y|^2+r^2)^{\frac{N}{2}+s}}\ dr\ dy\\
		&\quad= \int_{\R^k\setminus A}\frac{|v(y)|}{|y|^{k+2s}} \omega_{N-k}\int_{1}^{\infty} \frac{r^{N-k-1}}{\frac{1}{|y|^{N+2s}}+(1+r^2)^{\frac{N}{2}+s}}\ dr\ dy
		\geq \omega_{N-k}\int_{1}^{\infty} \frac{r^{N-k-1}}{1+(1+r^2)^{\frac{N}{2}+s}}\ dr\int_{\R^k\setminus A}\frac{|v(y)|}{|y|^{k+2s}}\ dy.
	\end{align*}
	Since $\int_{1}^{\infty} \frac{r^{N-k-1}}{1+(1+r^2)^{\frac{N}{2}+s}}\ dr<\infty$ and $\int_{0}^{\infty} \frac{t^{N-k-1}}{(1+t^2)^{\frac{N}{2}+s}}\ dt<\infty$ we have that
	\[
	\int_{\R^N\setminus A\times B}\frac{|u(x)|}{1+|x|^{N+2s}}\ dx<\infty \quad\text{if and only if}\quad \int_{\R^k\setminus A}\frac{|v(y)|}{|y|^{k+2s}}\ dy <\infty.
	\]
	Since $v\in L^1_{loc}(\R^k)$ if and only if $u\in L^1_{loc}(\R^N)$, by Fubini's theorem, we obtain that \eqref{equivs} holds. Next, fix  $x=(x',x'')\in U\subset \R^N$ for $x'\in V\subset \R^k$ and $x''\in \R^{N-k}$, then, by a change of variables,
	\begin{align*}
		(-\Delta)^s u(x)
		&=\frac{c_{N,s}}{2}\int_{\R^N} \frac{\sum_{k=-{m-1}}^{m+1} (-1)^k { \binom{2m+2}{m+1-k}} u(x+ky)}{|y|^{N+2s}} \ dy\\
		&=\frac{c_{N,s}}{2}\int_{\R^k}\sum_{k=-{m-1}}^{m+1} (-1)^k { \binom{2m+2}{m+1-k}} v(x'+ky') \int_{\R^{N-k}}\frac{1}{(|y'|^2+|y''|^2)^{\frac{N}{2}+s}} \ dy'' dy'\\
		&=\frac{c_{N,s}}{2}\int_{\R^k}\frac{\sum_{k=-{m-1}}^{m+1} (-1)^k { \binom{2m+2}{m+1-k}} v(x'+ky')}{|y'|^{k+2s}} \int_{\R^{N-k}}\frac{1}{(1+|z''|^2)^{\frac{N}{2}+s}} \ dz'' dy'\\
		&=\frac{c_{N,s}}{2}\int_{\R^{N-k}}\frac{1}{(1+|z''|^2)^{\frac{N}{2}+s}} \ dz''\frac{2}{c_{k,s}}(-\Delta)^s v(x')=(-\Delta)^s v(x'),
	\end{align*}
	since, by \eqref{cNms:def} and \eqref{4674354},
	\begin{align*}
		\frac{c_{N,s}}{c_{k,s}}\int_{\R^{N-k}}(1+|t|^2)^{-\frac{N}{2}-s} \ dt
		& =\frac{\Gamma(\frac{N}{2}+s)}{ \Gamma(\frac{k}{2}+s)\pi^{\frac{N-k}{2}}}
		\omega_{N-k}\int_{0}^\infty(1+|r|^2)^{-\frac{N}{2}-s}r^{N-k-1} \ dr\\
		& =\frac{\Gamma(\frac{N}{2}+s)}{ \Gamma(\frac{k}{2}+s)\pi^{\frac{N-k}{2}}}\ 
		\frac{2\pi^{\frac{N-k}{2}}}{\Gamma(\frac{N-k}{2})}
		\ \frac{\Gamma \left(\frac{N-k}{2}\right) \Gamma \left(\frac{k}{2}+s\right)}{2 \Gamma
			\left(\frac{N}{2}+s\right)}
		=1.
	\end{align*}
\end{proof}

\begin{remark}
	Under the weaker assumptions $v\in \cL^1_s(\R^k)$ and $u\in \cL^1_s(\R^N)$ one can also show that $(-\Delta)^s u(x_1,\ldots,x_N)=(-\Delta)^s v(x_1,\ldots,x_k)$ holds in a distributional sense.
\end{remark}

Using Lemma \ref{lem:ldh} we next show Proposition \ref{s-harmonic2} for $m=0$. We note that the $\sigma$-harmonicity of $x\mapsto (x_1)_+^{\sigma}$ is observed in \cite[equation (3.39)]{B99}, but for completeness we give here a proof.
\begin{lemma}\label{s-harmonic1}
	For $\sigma\in(0,1)$ the functions $x\mapsto (x_1)_+^{\sigma-1}$ and $x\mapsto (x_1)_+^{\sigma}$ are pointwisely $\sigma$-har\-mon\-ic in $\Rp$.
\end{lemma}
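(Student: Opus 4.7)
The plan is to reduce to the one-dimensional setting via Lemma \ref{lem:ldh}: since the functions in question depend only on $x_1$, it suffices to prove that $v_a(x) := (x)_+^a$ is pointwise $\sigma$-harmonic on $(0,\infty) \subset \R$ for $a \in \{\sigma-1, \sigma\}$. Both candidates belong to $\cL^1_\sigma(\R)$ (since $-1<a<2\sigma$) and are smooth on $(0,\infty)$, so \eqref{Ds:def} with $m=0$ defines $(-\Delta)^\sigma v_a(x)$ pointwise via an absolutely convergent integral for every $x>0$. I would then handle $a=\sigma$ by direct computation, and deduce $a=\sigma-1$ from it via the Kelvin-type transform of Proposition \ref{lem:sharm}.

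For $a=\sigma$, the homogeneity $v_\sigma(\lambda z)=\lambda^\sigma v_\sigma(z)$ together with the rescaling $y\mapsto \lambda y$ in \eqref{Ds:def} gives $(-\Delta)^\sigma v_\sigma(\lambda x) = \lambda^{-\sigma}(-\Delta)^\sigma v_\sigma(x)$, so $(-\Delta)^\sigma v_\sigma(x)=\eta\, x^{-\sigma}$ on $(0,\infty)$ with $\eta := (-\Delta)^\sigma v_\sigma(1)$, and the problem reduces to showing $\eta=0$. I would write $\eta$ out using \eqref{Ds:def}, exploit the $y \leftrightarrow -y$ symmetry, split the integral at $|y|=1$, and integrate by parts on each piece against $dv = y^{-1-2\sigma}\,dy$. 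The numerator $2 - (1+y)^\sigma - (1-y)^\sigma$ is $O(y^2)$ near $y=0$ (so the boundary term at $0$ vanishes because $\sigma<1$), the numerator $2-(1+y)^\sigma$ is $O(y^\sigma)$ at infinity (vanishing since $\sigma>0$), and the two boundary contributions at $y=1$ cancel between the two pieces; what remains is
\begin{align*}
\eta = -\frac{c_{1,\sigma}}{2}\left[\int_0^1 y^{-2\sigma}\bigl[(1+y)^{\sigma-1} - (1-y)^{\sigma-1}\bigr]\,dy + \int_1^\infty y^{-2\sigma}(1+y)^{\sigma-1}\,dy\right].
\end{align*}
For $\sigma\in(0,\tfrac12)$ the three resulting pieces become honest convergent Beta integrals: the two $(1+y)^{\sigma-1}$ contributions combine to $\int_0^\infty y^{-2\sigma}(1+y)^{\sigma-1}\,dy = B(1-2\sigma,\sigma)$, which coincides with $\int_0^1 y^{-2\sigma}(1-y)^{\sigma-1}\,dy = B(1-2\sigma,\sigma)$, yielding $\eta=0$. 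For $\sigma\in[\tfrac12,1)$ the identity extends by analytic continuation in $\sigma$, since the bracketed expression is analytic on all of $(0,1)$.

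For $a=\sigma-1$ I would invoke Proposition \ref{lem:sharm} with parameters $c=1$ and $v=0$, giving $\kappa x = 1/x$ on $\R\setminus\{0\}$ and $K_\sigma u(x) = |x|^{2\sigma-1}u(1/x)$. A direct check yields $K_\sigma v_\sigma = v_{\sigma-1}$ on $\R\setminus\{0\}$; since $v_\sigma \in \cL^1_\sigma$ is distributionally $\sigma$-harmonic on $(0,\infty)$ by the previous step and Lemma \ref{ptod}, Proposition \ref{lem:sharm} then gives distributional $\sigma$-harmonicity of $v_{\sigma-1}$ on $\kappa((0,\infty))=(0,\infty)$, and Lemma \ref{ptod} promotes this to the pointwise statement using the smoothness of $v_{\sigma-1}$ on $(0,\infty)$. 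The main technical obstacle is the Beta function computation in the second paragraph: the delicate accounting of boundary terms at $0$, $1$, and $\infty$, and the need to argue via analyticity rather than a direct Beta evaluation when $\sigma \geq \tfrac12$ (where the individual integrals $B(1-2\sigma,\sigma)$ diverge at $y=0$ even though their algebraic difference remains finite).
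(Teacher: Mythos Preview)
Your argument is correct, but it follows a genuinely different route from the paper's. The paper never computes any integral directly: for $(x_1)_+^\sigma$ it starts from the known $\sigma$-harmonic function $(1-|x|^2)_+^\sigma/|x+e_1|^N$ on the ball (taken from \cite{AJS16b,AJS17a}) and transports it to $\Rp$ via the transform $K_\sigma$ with $c=2$, $v=e_1$; for $(x_1)_+^{\sigma-1}$ it reduces to $N=1$, applies the same $K_\sigma$ (again $c=2$, $v=e_1$) in the reverse direction to land on $(x+1)(1-x^2)^{\sigma-1}$ on $(-1,1)$, and quotes \cite{AJS17a} for the $\sigma$-harmonicity of that function. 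In contrast, you reduce to $N=1$ first for both, handle $v_\sigma$ by a direct integration-by-parts reduction to the Beta identity $\int_0^\infty y^{-2\sigma}(1+y)^{\sigma-1}\,dy=\int_0^1 y^{-2\sigma}(1-y)^{\sigma-1}\,dy$ (valid for $\sigma<\tfrac12$, extended by analyticity), and then obtain $v_{\sigma-1}$ from $v_\sigma$ via the \emph{standard} inversion $c=1$, $v=0$, which maps the half-line to itself. Your approach is more self-contained---it avoids importing the ball theory entirely---at the price of the explicit Beta computation and the analytic-continuation step; the paper's proof is shorter and avoids any delicate integral identities, but it leans on external results. Your use of $K_\sigma$ with $c=1$, $v=0$ to pass from $v_\sigma$ to $v_{\sigma-1}$ is a particularly clean observation that the paper does not exploit.
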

\begin{proof}
	The function ${x\mapsto}\frac{(1-|x|^2)_+^s}{|x+e_1|^N}$ for $x\in\R^N$ is pointwisely $s$-harmonic in $B$ by \cite[Proposition 1.2]{AJS16b} or by \cite[Corollary 1.7]{AJS17a}.  Let $K_\sigma$ and $\kappa$ as in \eqref{kappa:K:def} with $c=2$ and $v=e_1$.  Then $\kappa(B)=\R^N_+$ and, for $x\in \R^N$, 
	\begin{align*}
		K_\sigma\Big(\frac{(1-|x|^2)_+^\sigma}{|x+e_1|^N}\Big)
		&=|x+e_1|^{2\sigma-N}\frac{(1-|\kappa x|^2)_+^\sigma}{|\kappa x+e_1|^N}\nonumber\\
		&=|x+e_1|^{2\sigma-N}(\frac{4(x_1)_+}{|x+e_1|^2})^\sigma(\frac{2}{|x+e_1|})^{-N}=2^{2\sigma-N}(x_1)_+^\sigma,
	\end{align*}
	and $x\mapsto (x_1)_+^{\sigma}$ is $\sigma$-harmonic in $\Rp$, by Proposition \ref{lem:sharm} and Lemma \ref{ptod}.
	
	We now show that $w:\R^N\to\R$ given by $w(x):=(x_1)_+^{\sigma-1}$ is $\sigma$-harmonic in $\Rp$.  By Lemma \ref{lem:ldh} it suffices to consider $N=1$.  Let $x\in (-1,1)$, then
	\begin{align*}
		K_\sigma(w(x))& = (x+1)^{2\sigma-1}\big(2\frac{x+1}{(x+1)^2}-1\big)^{\sigma-1}
		= (x+1)(1-x^2)^{\sigma-1},
	\end{align*}
	which is $\sigma$-harmonic in $(-1,1)$, by \cite[Corollary 1.7]{AJS17a}.  Then, by Proposition \ref{lem:sharm} and Lemma \ref{ptod}, $w$ is pointwisely $\sigma$-harmonic in $\{x>0\}$.
\end{proof}

We now show Proposition \ref{s-harmonic2}.

\begin{proof}[Proof of Proposition \ref{s-harmonic2}]
	By Lemmas \ref{lem:ldh} and \ref{s-harmonic1} it suffices to consider $N=1$ and $m\geq 1$. 
	Let $m\in\N$, $\sigma\in(0,1]$, $s=m+\sigma$, and let $u\in C^{\infty}((0,\infty))\cap \cL^1_{s}$ be such that 
	\begin{equation}\label{boundaries}
		\lim_{y\to\pm\infty}u^{(2i)}(y)\ \Big(\frac{d}{dy}\Big)^{2(m-i-1)+1}|y|^{-1-2\sigma} = 0,\quad 
		\lim_{y\to\pm\infty}u^{2i+1}(y)\ \Big(\frac{d}{dy}\Big)^{2(m-i-1)}|y|^{-1-2\sigma} =0
	\end{equation}
	for $i\in\{0,\ldots, m-1\}$. We now argue as in \cite[Theorem 1.2]{AJS17a}.  Note that, for $y\in\R^N\backslash\{0\}$,
	\begin{align}\label{art:1}
		|y|^{-1-2\sigma-2m}=\frac{-c_{1,\sigma}\pi^{1/2}\,\Gamma(-s)}{4^{s}\,\Gamma(\frac{1}{2}+s)}\Big(-\frac{d^2}{dy^2}\Big)^{m}|y|^{-1-2\sigma}\qquad \text{ for }y\neq 0.
	\end{align}
	To shorten notation, let 
	\begin{align*}
		\delta_mu(x,y)=\sum_{k=-{m-1}}^{m+1} (-1)^k { \binom{2m+2}{m+1-k}} u(x+ky)\quad \text{ and }\quad P:= \sum_{k=1}^{m+1} (-1)^k { \binom{2m+2}{m+1-k}}k^{2s}.
	\end{align*}
	By \cite[Lemma 2.2]{AJS17a} we know that
	\begin{align}\label{calc}
		\sum_{k=-m-1}^{m+1} (-1)^k { \binom{2m+2}{m+1-k}} k^{2m}=0.
	\end{align}
	Then, using \eqref{art:1}, integration by parts, \eqref{boundaries}, \eqref{calc}, the symmetry of the combinatorial coefficients, and changes of variables, we have, for $x>0$,
	\begin{align}
		(-\Delta)^su(x)&=\frac{c_{1,s}}{2}\int_{\R}\frac{\delta_mu(x,y)
		}{|y|^{1+2s}}\ dy=\frac{4^{s}\Gamma(\frac{1}{2}+s)}{ \pi^{\frac{1}{2}}\Gamma(-s) 2P}\int_{\R}\frac{
		\delta_mu(x,y)}{|y|^{1+2s}}\ dy\nonumber\\ 
	&=-\frac{c_{1,\sigma}}{2P}\lim_{r\to\infty}\int_{-r}^r(\delta_mu(x,y))\Big(-\frac{d^2}{dy^2}\Big)^{m}|y|^{-1-2\sigma}\ dy\\
	&=-\frac{c_{1,\sigma}}{P}\lim_{r\to\infty}\int_{-r}^r\Big(-\frac{d^2}{dy^2}\Big)^{m} \delta_mu(x,y) |y|^{-1-2\sigma}\ dy\nonumber\\
	&=-\frac{c_{1,\sigma}}{2P}\int_{\R} \sum_{k=-m-1}^{m+1} (-1)^k { \binom{2m+2}{m+1-k}}k^{2m} \frac{(-1)^{m}u^{(2m)}(x+ky)}{|y|^{1+2\sigma}}\ dy\nonumber\\
	&=\frac{(-1)^{m+1}c_{1,\sigma}}{2P}\int_{\R} \sum_{k=-m-1}^{m+1} (-1)^k { \binom{2m+2}{m+1-k}}k^{2m} \frac{u^{(2m)}(x+ky)-u^{(2m)}(x)}{|y|^{1+2\sigma}}\ dy\nonumber\\
	&=\frac{(-1)^{m+1}c_{1,\sigma}}{2P}\int_{\R} \sum_{k=-m-1}^{m+1} (-1)^k { \binom{2m+2}{m+1-k}}k^{2s} \frac{u^{(2m)}(x+y)-u^{(2m)}(x)}{|y|^{1+2\sigma}}\ dy\nonumber\\
	&=\frac{(-1)^{m+1}c_{1,\sigma}}{2P}\int_{\R} 2P \frac{u^{(2m)}(x+y)-u^{(2m)}(x)}{|y|^{1+2\sigma}}\ dy\nonumber\\
	&=c_{1,\sigma}\int_{\R} \frac{(-1)^{m}u^{(2m)}(x)-(-1)^{m}u^{(2m)}(x+y)}{|y|^{1+2\sigma}}\ dy = (-\Delta)^\sigma(-1)^{m}u^{(2m)}(x).\label{idlap}
\end{align}
Then, by Lemma \ref{s-harmonic1} and \eqref{idlap}, for $x>0$,
\begin{align}
	\label{sharm1d}\begin{aligned}
		(-\Delta)^s x^{2m+\sigma}_+=(-1)^m\prod_{j=1}^{2m}(j+\sigma)(-\Delta)^\sigma(x)^{\sigma}_+&=0,\\
		(-\Delta)^s x^{2m+\sigma-1}_+=(-1)^m\prod_{j=0}^{2m-1}(j+\sigma)(-\Delta)^\sigma(x)^{\sigma-1}_+&=0
	\end{aligned}
\end{align}
(note that \eqref{boundaries} is easily verified in these cases).  Let $j\in\{0,\ldots,2m-1\}$ and $u_j(x):=x_+^{j+\sigma-1}$.  Then by \eqref{sharm1d}, $u_j$ is $(\frac{j-1}{2}+\sigma)$-harmonic if $j$ is odd and 
$u_j$ is $(\frac{j}{2}+\sigma)$-harmonic if $j$ is even. By Lemma \ref{ptod} and because
\begin{align}\label{sm1tos}
	\int_{\R} u{(-\Delta)}^{s}\phi\ dx=\int_{\R} u{(-\Delta)}^{s-1}[-\phi^{(2)}]\ dx=0\qquad \text{ for any }\phi\in C^\infty_c((0,\infty)),
\end{align}
we have that $(s-1)$-harmonicity implies $s$-harmonicity; but then $u_j$ is $s$-harmonic for $j\in\{0,\ldots,2m+1\}$, as claimed.
\end{proof}

To close this subsection, we show that the $s$-harmonic functions in Proposition \ref{s-harmonic2} can be obtained using suitable boundary kernels. These kernels play a prominent role in the next subsection where we construct solutions of the nonhomogeneous problem \eqref{eq1}.

\begin{lemma}\label{lem:prop-edenhofer}
	Let $N\in\N$, $\sigma\in(0,1)$, $m\in \N$, and $j\geq m+1$. Then, for $x\in \Rp$,
	\begin{align*}
		\int_{\partial\Rp}\ \frac{(x_1)_+^{j+\sigma-1}}{|x-z|^{N+2(j-m-1)}}\ dz = \frac{\pi^{\frac{N-1}{2}}\Gamma(j-m-\frac{1}{2})}{\Gamma(\frac{N}{2}+j-m-1)}\ (x_1)_+^{2m-j+\sigma}.
	\end{align*}
\end{lemma}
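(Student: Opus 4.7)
The plan is to reduce the boundary integral to a standard one-dimensional Beta-type integral and then apply identity \eqref{4674354}. Since $x_1>0$ on $\Rp$, write points on $\partial\Rp$ as $z=(0,z')$ with $z'\in\R^{N-1}$, so that $|x-z|^2=x_1^2+|x'-z'|^2$. The integral becomes
\[
\int_{\partial\Rp}\ \frac{x_1^{j+\sigma-1}}{|x-z|^{N+2(j-m-1)}}\ dz
=\int_{\R^{N-1}}\frac{x_1^{j+\sigma-1}}{\bigl(x_1^2+|x'-z'|^2\bigr)^{\frac{N}{2}+j-m-1}}\ dz'.
\]
The idea then is to factor out all the $x_1$ dependence by the substitution $z'=x'+x_1 w$, $dz'=x_1^{N-1}\,dw$, which yields
\[
x_1^{j+\sigma-1+(N-1)-(N+2(j-m-1))}\int_{\R^{N-1}}\frac{dw}{(1+|w|^2)^{\frac{N}{2}+j-m-1}}
=x_1^{2m-j+\sigma}\int_{\R^{N-1}}\frac{dw}{(1+|w|^2)^{\frac{N}{2}+j-m-1}}.
\]

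Next, I would pass to polar coordinates in $\R^{N-1}$, obtaining
\[
\int_{\R^{N-1}}\frac{dw}{(1+|w|^2)^{\frac{N}{2}+j-m-1}}=\omega_{N-1}\int_0^\infty \frac{r^{N-2}}{(1+r^2)^{\frac{N}{2}+j-m-1}}\ dr,
\]
and apply \eqref{4674354} with $x=N-2$ and $y=\frac{N}{2}+j-m-1$. The assumption $j\geq m+1$ ensures the hypothesis $0<\tfrac{x+1}{2}<y$ (equivalently $j-m-\tfrac12>0$), so the integral is convergent and equal to $\Gamma(\tfrac{N-1}{2})\Gamma(j-m-\tfrac12)/[2\Gamma(\tfrac{N}{2}+j-m-1)]$. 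Combining this with $\omega_{N-1}=2\pi^{(N-1)/2}/\Gamma(\tfrac{N-1}{2})$ the $\Gamma(\tfrac{N-1}{2})$ factors cancel and one recovers the constant $\pi^{(N-1)/2}\Gamma(j-m-\tfrac12)/\Gamma(\tfrac{N}{2}+j-m-1)$ stated in the lemma.

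Finally, the degenerate case $N=1$ is handled separately by the paper's convention $\int_{\partial\R^1_+}f(y)\,dy=f(0)$: the left-hand side is $x_1^{j+\sigma-1}/x_1^{1+2(j-m-1)}=x_1^{2m-j+\sigma}$, while on the right-hand side $\pi^{0}=1$ and $\Gamma(j-m-\tfrac12)/\Gamma(j-m-\tfrac12)=1$, matching. I do not anticipate any genuine obstacle; the only subtlety is checking the arithmetic of the exponent of $x_1$ and verifying the convergence range dictated by $j\geq m+1$, both of which are routine.
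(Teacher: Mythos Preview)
Your proposal is correct and follows essentially the same approach as the paper: translate and rescale the tangential variable by $x_1$ to factor out the power $x_1^{2m-j+\sigma}$, then evaluate the remaining radial integral in $\R^{N-1}$ via polar coordinates and identity \eqref{4674354}, handling $N=1$ separately by the paper's convention. The arithmetic and the convergence check are exactly as in the paper's own argument.
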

\begin{proof}
	The statement for $N=1$ is clear, since, for $x>0$,
	\begin{align*}
		\int_{\partial \R_+}\ \frac{x^{j+\sigma-1}}{|x-z|^{1+2(j-m-1)}}\ dz=\frac{x^{j+\sigma-1}}{x^{1+2(j-m-1)}}= x^{2m-j+\sigma}.
	\end{align*}
	Now, assume that $N\geq 2$. The claim holds trivially if $x_1\leq 0$, so we may assume that $x=(x_1,x')\in \Rp$; then, by substitution with $z=(0,y+x')$ for $y\in \R^{N-1}$,
	\begin{align*}
		\int_{\partial\Rp}&\frac{x_1^{j+\sigma-1}}{|x-z|^{N+2(j-m-1)}}\ dz=\int_{\R^{N-1}} \frac{x_1^{2m-j+1+\sigma-N}}{(1+|y/x_1|^2)^{\frac{N}{2}+(j-m-1)}}\ dy=\int_{\R^{N-1}} \frac{x_1^{2m-j+\sigma}}{(1+|y|^2)^{\frac{N}{2}+(j-m-1)}}\ dy\\
		&= x_1^{2m-j+\sigma} \frac{2\pi^{\frac{N-1}{2}}}{\Gamma(\frac{N-1}{2})}\int_{0}^{\infty} \frac{\rho^{N-2}}{(1+\rho^2)^{\frac{N}{2}+(j-m-1)}}\ d\rho
		= x_1^{2m-j+\sigma} \frac{2\pi^{\frac{N-1}{2}}}{\Gamma(\frac{N-1}{2})}\frac{\Gamma(j-m-\frac{1}{2})\Gamma(\frac{N-1}{2})}{2\Gamma(\frac{N}{2}+j-m-1)}
	\end{align*}
	by equation \eqref{4674354}.
\end{proof}

\subsection{Boundary Poisson kernels}

We recall the definition of $E_{k,s}$ given in the introduction. For $m\in \N_0$, $\sigma\in(0,1]$, $s=m+\sigma$, $k\in\{0,\ldots,m\}$, $y\in \partial\Rp$, and $x\in \R^N\backslash\{y\}$, let
\begin{align*}
	E_{k,s}(x,y)&:=\sum_{i = 0}^{\lfloor\frac{m-k}{2}\rfloor}\alpha_{m-k,i}\frac{(x_1)_+^{s+m-k-2i}}{|y-x|^{N+2(m-k-i)}},
	\qquad \alpha_{l,i}:=\frac{(-1)^i2^{l-2i}\Gamma(\frac{N}{2}+l-i)}{\pi^{\frac{N}{2}}(l-2i)!\ i!}
\end{align*}
and $\lfloor a \rfloor$ is the integer part of $a$. The main objective of this section is to prove Theorem \ref{exp:sol:thm}.  This requires integral and combinatorial identities, regularity estimates, and to show $s$-harmonicity via the $K_s$ transform.  We split these steps into several lemmas. 

\begin{lemma}\label{int:lem} Let $N\geq 2$, $m\in\N$, $k\in\{1,\ldots,m-1\}$, $j\in \{k+1,\ldots,m\}$ such that $j-k$ is even, $i\in\{0,\ldots,
	\frac{m-k}{2}\}$, and $\gamma\in\N^{N-1}$ such that $|\gamma|=\frac{j-k}{2}$, then
	\begin{align}\label{formula}
		\int_{\R^{N-1}}\frac{y^{2\gamma}}{(1+|y|^2)^{\frac{N}{2}+(m-k-i)}}\ dy
		=\frac{\pi^{\frac{N-1}{2}}(2\gamma)!}{2^{j-k}\gamma!}
		\frac{\Gamma \left(m-i+\frac{1-j-k}{2}\right)}{\Gamma \left(m-i-k+\frac{N}{2}\right)}.
	\end{align}
\end{lemma}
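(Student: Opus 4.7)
The strategy is to decouple radial and angular variables by passing to spherical coordinates on $\R^{N-1}$, evaluate each factor separately with well-known formulas, and then simplify using the Legendre duplication formula. The computation is routine; there is no conceptual obstacle, only bookkeeping.

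More precisely, writing $y = r\omega$ with $r > 0$ and $\omega \in S^{N-2}$, and using the homogeneity of $y^{2\gamma}$ of degree $2|\gamma| = j-k$, the integral factors as
\begin{equation*}
\int_{\R^{N-1}}\frac{y^{2\gamma}}{(1+|y|^2)^{\frac{N}{2}+(m-k-i)}}\,dy
=\left(\int_0^\infty\frac{r^{j-k+N-2}}{(1+r^2)^{\frac{N}{2}+(m-k-i)}}\,dr\right)\left(\int_{S^{N-2}}\omega^{2\gamma}\,d\sigma(\omega)\right).
\end{equation*}
The first factor is handled by formula \eqref{4674354}: with $x = j-k+N-2$ and $y = \frac{N}{2}+m-k-i$, one computes $y - \frac{x+1}{2} = m-i+\frac{1-j-k}{2}$, so the radial integral equals
\begin{equation*}
\frac{\Gamma\!\left(\frac{j-k+N-1}{2}\right)\Gamma\!\left(m-i+\frac{1-j-k}{2}\right)}{2\,\Gamma\!\left(\frac{N}{2}+m-k-i\right)}.
\end{equation*}

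For the second factor I will use the classical identity
\begin{equation*}
\int_{S^{N-2}}\omega^{2\gamma}\,d\sigma(\omega)=\frac{2\,\prod_{\ell=1}^{N-1}\Gamma\!\left(\gamma_\ell+\tfrac{1}{2}\right)}{\Gamma\!\left(|\gamma|+\tfrac{N-1}{2}\right)},
\end{equation*}
which follows from the Dirichlet beta-integral on the simplex, or equivalently from the product formula for the Gaussian integral of monomials. The Legendre duplication formula gives $\Gamma(\gamma_\ell+\frac{1}{2}) = \frac{(2\gamma_\ell)!\sqrt{\pi}}{4^{\gamma_\ell}\gamma_\ell!}$, so taking the product over $\ell$ and using $\sum_\ell \gamma_\ell = |\gamma| = \frac{j-k}{2}$ yields
\begin{equation*}
\prod_{\ell=1}^{N-1}\Gamma\!\left(\gamma_\ell+\tfrac{1}{2}\right)=\pi^{\frac{N-1}{2}}\,\frac{(2\gamma)!}{2^{j-k}\gamma!}.
\end{equation*}

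Substituting and observing that $|\gamma|+\frac{N-1}{2} = \frac{j-k+N-1}{2}$ produces a cancellation of $\Gamma\!\left(\frac{j-k+N-1}{2}\right)$ between the two factors, leaving exactly the right-hand side of \eqref{formula}. The only mildly delicate point is to verify that the indices satisfy the hypothesis $0 < \frac{x+1}{2} < y$ of \eqref{4674354}, which amounts to $m-i+\frac{1-j-k}{2} > 0$; this follows from $j \le m$, $k \ge 1$, $i \le (m-k)/2$, which give $m-i+\frac{1-j-k}{2} \ge \frac{m-j+1}{2} + \frac{k}{2} \ge \frac{1}{2} > 0$, so all gamma-function evaluations are finite and positive.
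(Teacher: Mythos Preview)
Your proof is correct and follows essentially the same route as the paper: pass to polar/spherical coordinates in $\R^{N-1}$, evaluate the radial part via \eqref{4674354}, and reduce the angular part with the Legendre duplication formula. The paper proceeds by a case distinction $N=2$, $N=3$, $N\ge 4$ (computing the sphere integral explicitly in low dimensions and referring to \cite[Lemma~3.3]{AJS17b} for general $N$), whereas you invoke the Dirichlet-type identity $\int_{S^{N-2}}\omega^{2\gamma}\,d\sigma = 2\prod_\ell \Gamma(\gamma_\ell+\tfrac12)/\Gamma(|\gamma|+\tfrac{N-1}{2})$ uniformly; this is a cleaner packaging of the same computation.

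One small arithmetic slip: your lower bound $m-i+\tfrac{1-j-k}{2}\ge \tfrac{m-j+1}{2}+\tfrac{k}{2}$ is off by the $\tfrac{k}{2}$ term---from $i\le \tfrac{m-k}{2}$ one gets only $m-i+\tfrac{1-j-k}{2}\ge \tfrac{m-j+1}{2}$. Since $j\le m$ this is still $\ge \tfrac12>0$, so the convergence check for \eqref{4674354} is unaffected.
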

\begin{proof}
	If $N=2$ then $\gamma=\frac{j-k}{2}$, and therefore
	\begin{align*}
		\int_{\R^{N-1}}\frac{y^{2\gamma}}{(1+|y|^2)^{\frac{N}{2}+(m-k-i)}}\ dy
		&=2\int_{0}^\infty\frac{y^{j-k}}{(1+|y|^2)^{\frac{N}{2}+(m-k-i)}}\ dy\\
		&=\Gamma \left(\frac{j-k+1}{2}\right)\frac{\Gamma\left(m-i+\frac{1-j-k}{2}\right)}{\Gamma (m-i-k+1)},
	\end{align*}
	and \eqref{formula} follows from the identity
	\begin{align}\label{gamma:id}
		\Gamma\bigg(\frac{1}{2}+a\bigg)=2^{1-2a}\pi^{\frac{1}{2}}\frac{\Gamma(2a)}{\Gamma(a)}\qquad \text{ for }a>0,
	\end{align}
	since
	\begin{align*}
		\Gamma \left(\frac{j-k+1}{2}\right)=
		\Gamma \left(\gamma+\frac{1}{2}\right)
		=\frac{\pi^{\frac{1}{2}}(2\gamma)!}{2^{j-k}\gamma!}.
	\end{align*}
	
	If $N=3$, then $\gamma_1+\gamma_2=\frac{j-k}{2}$ for some $\gamma_1,\gamma_2\in\N_0$, and, using polar coordinates,
	\begin{align*}
		\int_{\R^{N-1}}\frac{y^{2\gamma}}{(1+|y|^2)^{\frac{3}{2}+(m-k-i)}}\ dy
		&=\int_0^\infty \frac{r^{1+j-k}}{(1+r^2)^{\frac{3}{2}+(m-k-i)}}\ dr \int_{0}^{2\pi}\sin^{2\gamma_1}\theta\cos^{2\gamma_2}\theta\ d\theta\\
		&=\Gamma \left(\frac{j-k+2}{2}\right) \frac{\Gamma
			\left(m-i+\frac{1-j-k}{2}\right)}{2 \Gamma \left(m-i-k+\frac{3}{2}\right)}
		\frac{2 \Gamma
			\left(\gamma_1+\frac{1}{2}\right) \Gamma \left(\gamma_2+\frac{1}{2}\right)}{\Gamma
			(\gamma_1+\gamma_2+1)},
	\end{align*}
	and \eqref{formula} follows since, by \eqref{gamma:id},
	\begin{align*}
		\Gamma\left(\gamma_1+\frac{1}{2}\right) \Gamma \left(\gamma_2+\frac{1}{2}\right)
		=\frac{\pi(2\gamma_1)!(2\gamma_2)!}{2^{j-k}\gamma_1!\gamma_2!} =\frac{\pi(2\gamma)!}{2^{j-k}\gamma!}.
	\end{align*}
	Finally, for $N\geq 4$ we can argue similarly using using spherical coordinates in $\R^{N-1}$ (for more details see the proof of \cite[Lemma 3.3]{AJS17b}, for example), that is, $y_{i}=r\cos \theta_{i} \prod_{l=1}^{i-1}\sin\theta_l$ for $i\in\{1,\ldots,N-2\}$ and $y_{N-1}=r \prod_{l=1}^{N-2}\sin\theta_l$, where 
	$r>0,$ $\theta_1,\ldots,\theta_{N-3}\in(0,\pi),$ $\theta_{N-2}\in(0,2\pi)$, and the associated Jacobian is 
	$J(r,\theta_1,\ldots,\theta_{N-2})=r^{N-2}\prod_{j=1}^{N-3}\sin^{N-2-j}\theta_j.$ Then
	\begin{align*}
		\int_{\partial\Rp}\frac{y^{2\gamma}}{|1+|y'|^2|^{\frac{N}{2}+(m-k-i)}}\ dy
		&=\frac{\pi^{\frac{N-1}{2}}(2\gamma)!}{2^{j-k-1}\gamma!\Gamma(\frac{N-1+j-k}{2})}
		\int_{0}^\infty \frac{r^{N-2+j-k}}{|1+r^2|^{\frac{N}{2}+(m-k-i)}}\ dr\\
		&=\frac{\pi^{\frac{N-1}{2}}(2\gamma)!}{2^{j-k-1}\gamma!\Gamma(\frac{N-1+j-k}{2})}
		\frac{\Gamma \left(\frac{1}{2} (j-k+N-1)\right) \Gamma \left(m-i+\frac{1-j-k}{2}\right)}{2 \Gamma \left(m-i-k+\frac{N}{2}\right)}\\
		&=\frac{\pi^{\frac{N-1}{2}}(2\gamma)!}{2^{j-k}\gamma!}
		\frac{\Gamma \left(m-i+\frac{1-j-k}{2}\right)}{\Gamma \left(m-i-k+\frac{N}{2}\right)}.
	\end{align*}
\end{proof}

\begin{lemma}\label{last:lem}
	Let $m\in\N$, $k\in\{1,\ldots,m-1\}$, and $j\in \{k,\ldots,m\}$ such that $j-k$ is even, then
	\begin{align*}
		\sum_{i = 0}^{\lfloor\frac{m-k}{2}\rfloor}
		\alpha_{m-k,i}
		\frac{\Gamma \left(m-i+\frac{1-j-k}{2}\right)}{\Gamma \left(m-i-k+\frac{N}{2}\right)}=\delta_{j,k}\pi^{\frac{1-N}{2}}.
	\end{align*}
\end{lemma}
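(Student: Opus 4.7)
The plan is to reduce the identity to the classical orthogonality of Hermite polynomials. Setting $l:=m-k$ and $n:=(j-k)/2\in\{0,\ldots,\lfloor l/2\rfloor\}$ (so that $j=k$ corresponds precisely to $n=0$), and substituting the definition of $\alpha_{l,i}$ from \eqref{constants}, the first thing I would notice is that $\Gamma(\tfrac{N}{2}+l-i)$ in the numerator of $\alpha_{m-k,i}$ cancels with $\Gamma(m-i-k+\tfrac{N}{2})$ in the denominator of the quotient. The dependence on $N$ factors out cleanly as $\pi^{-N/2}$, and the entire identity reduces to the $N$-independent statement
\[
\sum_{i=0}^{\lfloor l/2\rfloor}\frac{(-1)^i\,2^{l-2i}}{(l-2i)!\,i!}\,\Gamma\!\Big(l-n-i+\tfrac12\Big)=\sqrt{\pi}\,\delta_{n,0}.
\]

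To prove this reduced identity, I would use $\Gamma(l-n-i+\tfrac12)=\int_0^\infty t^{l-n-i-1/2}e^{-t}\,dt$ and interchange sum and integral. Pulling out the $i$-independent factor $t^{l-n-1/2}$, the remaining series over $i$ reads $t^{-l/2}\sum_i \frac{(-1)^i(2\sqrt{t})^{l-2i}}{(l-2i)!\,i!}$, which by the classical generating identity $e^{2xy-y^2}=\sum_{l\ge 0}\frac{H_l(x)}{l!}y^l$ for the physicists' Hermite polynomials is precisely $t^{-l/2}H_l(\sqrt{t})/l!$. After the substitution $t=s^2$, and using that the integrand $s^{l-2n}e^{-s^2}H_l(s)$ is even in $s$ (since both $H_l$ and $s^{l-2n}$ have parity $(-1)^l$), the reduced identity becomes
\[
\frac{1}{l!}\int_{\R}s^{l-2n}e^{-s^2}H_l(s)\,ds=\sqrt{\pi}\,\delta_{n,0}.
\]

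The conclusion then follows from standard properties of Hermite polynomials. For $n\geq 1$, the monomial $s^{l-2n}$ has degree strictly less than $l$ and is therefore orthogonal to $H_l$ with respect to the weight $e^{-s^2}$, giving zero. For $n=0$, writing $s^l=2^{-l}H_l(s)+(\text{lower-degree Hermite terms})$ using the known leading coefficient $2^l$ of $H_l$, and applying the normalization $\int_{\R}H_l(s)^2 e^{-s^2}\,ds=2^l l!\sqrt{\pi}$, one obtains $\sqrt{\pi}$. The main obstacle is recognizing the hidden Hermite structure: once the inner generating series is identified with $H_l(\sqrt{t})/l!$, everything else reduces to orthogonality; the only technicality to check is the parity argument that allows extending the $(0,\infty)$ integral to $\R$, which is automatic because $l$ and $l-2n$ share the same parity.
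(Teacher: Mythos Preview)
Your proof is correct and follows a genuinely different route from the paper's. After the same initial cancellation of the $\Gamma(\tfrac{N}{2}+l-i)$ factors, the paper proceeds purely combinatorially: it rewrites $\Gamma(l-i-a+\tfrac12)$ via the duplication formula as $\sqrt{\pi}\,(2(l-i-a))!/(4^{l-i-a}(l-i-a)!)$, reorganizes the sum into binomial coefficients, and then invokes Gould's identity
\[
\sum_{i=0}^{\lfloor l/2\rfloor}(-1)^i\binom{x}{i}\binom{2x-2i}{l-2i}=2^l\binom{x}{l}
\]
with $x=l-a$, so that the right-hand side vanishes for $a\geq 1$ and equals $2^l$ for $a=0$. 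Your argument instead recognizes the inner sum as the explicit series for the physicists' Hermite polynomial $H_l$, writes the half-integer Gamma as $\int_0^\infty t^{l-n-i-1/2}e^{-t}\,dt$ (valid since $l-n-i\geq 0$ for all terms), and reduces the claim to the orthogonality relation $\int_{\R}s^{l-2n}H_l(s)e^{-s^2}\,ds=0$ for $1\leq n\leq\lfloor l/2\rfloor$ together with the leading-coefficient normalization of $H_l$. The paper's approach is a clean finite manipulation that appeals to a catalogued identity; yours is more conceptual, explaining the vanishing through orthogonality and tying the coefficients $\alpha_{l,i}$ to a classical special-function family. Both are of comparable length and difficulty.
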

\begin{proof} 
	We use the following combinatorial identity (see, for example, \cite[equation (3.63)]{G72})
	\begin{align}\label{Gould}
		\sum_{i=0}^{\lfloor \frac{l}{2} \rfloor}
		(-1)^i \binom{x}{i}\binom{2x-2i}{l-2i}=\binom{x}{l}2^l
		\qquad \text{ for }\quad l,x\in\N,
	\end{align}
	where $\binom{x}{l}:=0$ if $x<l$. Let $l=m-k$ and, since $j-k$ is even, let $j-k=2a$ for some $a\in\N_0$, then, using \eqref{gamma:id} and \eqref{Gould},
	\begin{align*}
		\sum_{i = 0}^{\lfloor\frac{l}{2}\rfloor}
		\alpha_{l,i}&
		\frac{\Gamma \left(l-i+\frac{1-j+k}{2}\right)}{\Gamma \left(l-i+\frac{N}{2}\right)}
		=\sum_{i = 0}^{\lfloor\frac{l}{2}\rfloor}
		\frac{(-1)^i2^{l-2i}}{\pi^{\frac{N}{2}}(l-2i)!\ i!}
		\Gamma \left(l+\frac{1}{2}-i-a\right)\\
		&=\frac{2^{2a-l}}{\pi^{\frac{N-1}{2}}}
		\sum_{i = 0}^{\lfloor\frac{l}{2}\rfloor}
		\frac{(-1)^i}{(l-2i)!\ i!}
		\frac{(2(l-i-a))!}{(l-i-a)!}
		=\frac{2^{2a-l}}{\pi^{\frac{N-1}{2}}}\sum_{i = 0}^{\lfloor\frac{l}{2}\rfloor} (-1)^i
		\binom{2(l-i-a)}{l-2i}\frac{(l-2a)!}{(l-i-a)!\ i!} \\
		&=\frac{2^{2a-l}}{\pi^{\frac{N-1}{2}}} \frac{(l-2a)!}{(l-a)!}\sum_{i = 0}^{\lfloor\frac{l}{2}\rfloor} (-1)^i
		\binom{2(l-a)-2i}{l-2i}\binom{l-a}{i}
		=\frac{2^{2a-l}}{\pi^{\frac{N-1}{2}}} \frac{(l-2a)!}{(l-a)!}\binom{l-a}{l}2^l=\delta_{j,k}\pi^{\frac{1-N}{2}}.
	\end{align*}
\end{proof}

\begin{lemma}\label{Holder:lem:1}
	Let $N\geq 2$, $l,k,j,m,i\in\N_0$ such that $0\leq l\leq  k\leq j\leq m$, $i\leq \lfloor\frac{m-k}{2}\rfloor$, $g\in C_c^\beta(\partial \Rp)$ for some $0<\beta<1$,
	$R>0$ such that $\supp(g)\subset B_R(0)\subset \R^{N-1}$, and 
	\begin{align}\label{L:def}
		L:=\{x\in\R^N\::\:0\leq x_1\leq 1,\ { |x'|<R+1}\}.
	\end{align}
	For $x\in \Rp$ and $\alpha\in\N_0^{N}$ with $|\alpha|=j-l$ let
	\begin{align*}
		w(x):=x_1^{k-l}\int_{\partial\Rp}\frac{g(x_1y+x')y^\alpha}{|1+|y|^2|^{\frac{N}{2}+(m-k-i)}}\ dy.
	\end{align*} 
	Then there is $C(g,N)=C>0$ such that
	\begin{align}\label{hm2}
		\sup_{{x,\widetilde x\in L},\ x_1\neq \widetilde x_1}\frac{|w(x)-w(\widetilde x)|}{|x-\widetilde x|^\beta}<C.
	\end{align}
\end{lemma}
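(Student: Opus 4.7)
The strategy is to establish the gradient bound $|\nabla w(x)| \le C x_1^{\beta - 1}$ on $L$ with $x_1 > 0$, and then conclude the H\"older estimate by integration along suitable paths in $L$.

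For $x \in L$ with $x_1 > 0$, the change of variable $z = x_1 y + x'$ in the defining integral of $w$ recasts it as the Poisson-type integral
$w(x) = x_1^{p} \int_{\R^{N-1}} \frac{g(z)\,(z-x')^\alpha}{(x_1^2 + |z-x'|^2)^{n/2}}\, dz$,
where $p = 2m - 2i - j - k + 1 \ge 1$ (using $i \le \lfloor(m-k)/2\rfloor$ and $j \le m$) and $n = N + 2(m-k-i)$; compactness of $\supp g \subset B_R$ keeps the $z$-domain effectively bounded. The idea is to split $g(z) = g(x') + [g(z) - g(x')]$ inside the integral. The constant piece produces a term of the form $c\, g(x')\, x_1^{k-l}$ (after a compensating argument using the compact support of $g$, or symmetry in $y$, to make sense of the reference integral $\int y^\alpha/(1+|y|^2)^{n/2}\, dy$), which is Lipschitz in $x_1$ on $L$ since $k-l \geq 0$ and H\"older-$\beta$ in $x'$ through $g$. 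The remaining piece gains an extra factor of $|z-x'|^\beta$ via the bound $|g(z) - g(x')| \le [g]_\beta |z-x'|^\beta$, which upon rescaling $y = (z-x')/x_1$ translates to an extra factor of $x_1^\beta$. Differentiation under the integral sign in $x_1$ or $x'_i$ yields new Poisson-type integrals with shifted exponents, and the same subtraction trick applied to the differentiated kernels gives $|\partial_{x_1} w(x)|, |\partial_{x'_i} w(x)| \le C x_1^{\beta - 1}$.

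With this gradient bound, for $x, \widetilde x \in L$ with $0 < \widetilde x_1 < x_1$ (the case $\widetilde x_1 = 0$ is handled by continuous extension, which is immediate if $k > l$ since $x_1^{k-l}$ provides an overall vanishing factor), I integrate the gradient bound along the vertical segment from $(\widetilde x_1, x')$ to $(x_1, x')$:
$|w(x_1, x') - w(\widetilde x_1, x')| \le C \int_{\widetilde x_1}^{x_1} a^{\beta - 1}\, da \le \frac{C}{\beta}(x_1 - \widetilde x_1)^\beta \le \frac{C}{\beta}|x - \widetilde x|^\beta$,
using the elementary inequality $a^\beta - b^\beta \le (a-b)^\beta$ valid for $0 \le b < a$ and $\beta \in (0,1)$. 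The horizontal displacement from $(\widetilde x_1, x')$ to $(\widetilde x_1, \widetilde x')$ is controlled by a three-segment detour that first raises the height to $\max(\widetilde x_1, |x' - \widetilde x'|)$, moves horizontally, and then descends; by the gradient bound each leg contributes at most $C|x - \widetilde x|^\beta$.

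The principal obstacle is verifying the gradient bound rigorously. The cancellations between the two summands arising in $\partial_{x_1} w$ (and their analogues for $\partial_{x'_i} w$), the possible non-convergence or parity-induced vanishing of the reference integrals $\int y^\alpha/(1+|y|^2)^{n/2}\, dy$ for certain values of $\alpha$, and the need to treat uniformly the full range of indices $(k,l,j,i,\alpha)$ require careful bookkeeping; the compact support of $g$ and the restriction $x \in L$ (bounded $x'$) are essential throughout to ensure finiteness of all integrals encountered. Once the gradient bound is established, the H\"older estimate \eqref{hm2} follows by the path integration just described.
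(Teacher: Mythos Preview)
Your route differs from the paper's, and while the high-level strategy (gradient bound $|\nabla w|\le Cx_1^{\beta-1}$ followed by path integration) is in principle sound, the justification of the gradient bound has a real gap.

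The subtraction $g(z)=g(x')+[g(z)-g(x')]$ only does what you want when the reference integral $\int_{\R^{N-1}}y^\alpha(1+|y|^2)^{-n/2}\,dy$ is finite (or vanishes by odd parity).  In $y$-coordinates the effective domain of integration is a ball of radius $\sim1/x_1$ coming from $\supp g$, so for the ``constant piece'' you are not integrating $g(x')\cdot y^\alpha(1+|y|^2)^{-n/2}$ over $\R^{N-1}$ but over this growing ball; when $|\alpha|$ is even and $|\alpha|>2(m-k-i)$ the integral diverges as $x_1\to0$, and the identification with $c\,g(x')x_1^{k-l}$ fails.  You note this obstacle but do not resolve it.  A workable fix is a case split: when $k=l$ one checks (from $b\ge1$) that $|\alpha|\le2(m-k-i)$, so the reference integral converges and your subtraction goes through; when $k>l$ one abandons the subtraction and uses the crude estimate $x_1^{k-l-1}\int_0^{C/x_1}r^{|\alpha|+N-2-n}\,dr\le Cx_1^{\min(k-l-1,\,b-1)}\le Cx_1^{\beta-1}$, where the last inequality uses $k-l\ge1$ and $b\ge1>\beta$.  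Without this dichotomy the argument is incomplete.  A second minor point: the ``constant piece'' $c\,g(x')x_1^{k-l}$ cannot be handled through a gradient bound (since $g$ is only $C^\beta$), so you are mixing two mechanisms (direct H\"older for $w_1$, gradient bound for $w_2$); this is fine but should be stated.

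By contrast, the paper avoids all of this by estimating the H\"older quotient directly: it writes
\[
x_1^{k-l}g(x_1y+x')-\widetilde x_1^{\,k-l}g(\widetilde x_1 y+x')
=(x_1^{k-l}-\widetilde x_1^{\,k-l})g(\widetilde x_1 y+x')+x_1^{k-l}\bigl(g(x_1y+x')-g(\widetilde x_1 y+x')\bigr),
\]
uses $|g(x_1y+x')-g(\widetilde x_1y+x')|\le[g]_\beta|y|^\beta|x_1-\widetilde x_1|^\beta$ to produce the required factor $|x_1-\widetilde x_1|^\beta$, and then bounds the resulting $y$-integral over $|y|\lesssim1/x_1$ via the single observation that $b:=1+2m-j-k-2i\ge1>\beta$, which makes $x_1^{k-l}\int_1^{C/x_1}r^{\beta+(k-l)-1-b}\,dr$ uniformly bounded.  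No subtraction, no reference integral, no case analysis.
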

\begin{proof}
	Since $g$ has compact support it is clear that $w(x)<\infty$ for all $x\in\Rp$.  In the following $C>0$ denotes possibly different positive constants depending only on $g$ and $N$. Let
	\begin{align*}
		b:=1+2m-j-k-2i=1+(m-j)+(m-k-2i)\geq 1,
	\end{align*}
	and $R>0$ and $L$ as in the statement. Then, for ${x,\widetilde x\in L}$, $x\neq\widetilde x$, $x_1\leq \widetilde x_1$,
	\begin{align}
		& \frac{|w(x_1,x')-w(\widetilde x_1,x')|}{|x_1-\widetilde x_1|^\beta}
		\leq \int_{B_{\frac{R}{x_1}(x')}}\frac{|y|^{j-l}}{|1+|y|^2|^{\frac{N}{2}+(m-k-i)}}
		\frac{|x_1^{k-l}g(x_1y+x')-\widetilde x_1^{k-l}g(\widetilde x_1y + x')|}{|x_1-\widetilde x_1|^\beta}
		\ dy\nonumber\\
		&\qquad \leq \int_{B_{\frac{R}{x_1}(x')}}\frac{|y|^{j-l}}{|1+|y|^2|^{\frac{N}{2}+(m-k-i)}}
		\frac{|(x_1^{k-l}-\widetilde x_1^{k-l})g(\widetilde x_1y+x')+ x_1^{k-l}(g(x_1y+x')-g(\widetilde x_1y + x'))|}{|x_1-\widetilde x_1|^\beta}
		\ dy\nonumber\\
		&\qquad \leq C\int_{B_{\frac{R}{x_1}(x')}}\frac{|y|^{j-l}}{|1+|y|^2|^{\frac{N}{2}+(m-k-i)}}
		\Big(\frac{|x_1^{k-l}-\widetilde x_1^{k-l}|}{|x_1-\widetilde x_1|^{\beta}}+ x_1^{k-l}|y|^\beta\Big)\ dy.\label{h2}
	\end{align}
	Note that
	
	\begin{align}
		\sup_{\substack{{x,\widetilde x\in L}\\ x_1\leq \widetilde x_1}}\ \ &\int_{B_{\frac{R}{x_1}(x')}}
		\frac{|y|^{j-l}}{|1+|y|^2|^{\frac{N}{2}+(m-k-i)}}
		x_1^{k-l}|y|^\beta\ dy
		\leq C 
		\sup_{\substack{{x,\widetilde x\in L}\\ x_1\leq \widetilde x_1}}
		x_1^{k-l}
		\int_{0}^{\frac{R}{x_1}+|x'|}\frac{r^{\beta +j-l+N-2}}{|1+r^2|^{\frac{N}{2}+(m-k-i)}}\ dr \\
		&\leq C
		\sup_{\substack{{x,\widetilde x\in L}\\ x_1\leq \widetilde x_1}}
		x_1^{k-l} \Big(1+
		\int_{1}^{\frac{R}{x_1}+|x'|}r^{ \beta +(k-l)-1-(1+2m-j-k-2i)}\ dr\Big) \\
		&\leq C
		\sup_{\substack{{x,\widetilde x\in L}\\ x_1\leq \widetilde x_1}}
		x_1^{k-l}+
		(R+|x'|x_1)^{k-l}\int_{1}^{\infty}r^{\beta-b-1}\ dr \leq C
		\sup_{\substack{{x,\widetilde x\in L}\\ x_1\leq \widetilde x_1}}
		x_1^{k-l}+
		\frac{(R+|x'|x_1)^{k-l}}{b-\beta}<\infty\label{h3}
	\end{align}

	and, similarly, assuming without loss of generality that $k-l\geq 1$,
	\begin{align}
		\sup_{\substack{{x,\widetilde x\in L}\\ x_1\leq \widetilde x_1}}
		& \quad\int_{B_{\frac{R}{x_1}(x')}}\frac{|y|^{j-l}}{|1+|y'|^2|^{\frac{N}{2}+(m-k-i)}}
		\frac{|x_1^{k-l}-\widetilde x_1^{k-l}|}{|x_1-\widetilde x_1|^{\beta}}\ dy 
		\ \leq \\
		& \leq C\ \sup_{\substack{{x,\widetilde x\in L}\\ x_1\leq\widetilde x_1}}\ 
		\frac{|x_1^{k-l}-\widetilde x_1^{k-l}|}{|x_1-\widetilde x_1|^{\beta}}
		(1+\int_{1}^{\frac{R}{x_1}+|x'|}r^{(k-l-\beta)-b-1+\beta}\ dr) \\
		& \leq C\ \sup_{\substack{{x,\widetilde x\in L}\\ x_1\leq\widetilde x_1}}\ 
		\frac{|x_1^{k-l}-\widetilde x_1^{k-l}|}{|x_1-\widetilde x_1|^{\beta}}
		\Bigg(1+\frac{x_1^{l-k+\beta}(R+|x'|x_1)^{k-l-\beta}}{b-\beta}\Bigg) \\
		& \leq C\ \sup_{\substack{{x,\widetilde x\in L}\\ x_1\leq\widetilde x_1}}\ 
		\frac{|1-(\frac{\widetilde x_1}{x_1})^{k-l}|}{|1-(\frac{\widetilde x_1}{x_1})|^{\beta}}(x_1^{k-l -\beta}+C)<\infty\label{h4}.
	\end{align}
	
	Finally, arguing as in \eqref{h3},
	\begin{align}
		\sup_{\substack{{x,\widetilde x\in L}\\ x_1\leq\widetilde x_1}}\frac{|w(\tilde{x_1},x')-w(\tilde{x_1},\tilde{x}\ ')|}{|x'-\tilde{x}\ '|^{\beta}}
		&\leq\sup_{\substack{{x,\widetilde x\in L}\\ x_1\leq\widetilde x_1}}
		x_1^{k-l}\int_{B_{\frac{R}{x_1}(x')}}\frac{|y|^{j-l}}{|1+|y'|^2|^{\frac{N}{2}+(m-k-i)}}
		\frac{|g(\widetilde x_1y+x')-g(\widetilde x_1 y + \widetilde x')|}{|x_1-\widetilde x_1|^\beta}
		\ dy\nonumber\\
		&\leq C\sup_{\substack{{x,\widetilde x\in L}\\ x_1\leq\widetilde x_1}} 
		x_1^{k-l}\int_{B_{\frac{R}{x_1}(x')}}\frac{|y|^{j-l}}{|1+|y'|^2|^{\frac{N}{2}+(m-k-i)}}
		\ dy<\infty.\label{h5}
	\end{align}
	Since 
	\begin{align*}
		\frac{|w(x_1,x')-w(\tilde{x_1},\tilde{x}\ ')|}{|x-\tilde{x}|^{\beta}}\leq \frac{|w(x_1,x')-w(\tilde{x_1},x')|}{|x_1-\tilde{x_1}|^{\beta}}+\frac{|w(\tilde{x_1},x')-w(\tilde{x_1},\tilde{x}\ ')|}{|x'-\tilde{x}\ '|^{\beta}}
	\end{align*}
	the claim \eqref{hm2} now follows from \eqref{h2},\eqref{h3},\eqref{h4},\eqref{h5}.
\end{proof}

\begin{lemma}\label{Holder:lem}
	Let $N\geq 2$, $l,k,j,m,i\in\N_0$ such that $0\leq l\leq  k\leq j\leq m$, $i\leq \lfloor\frac{m-k}{2}\rfloor$, $h\in C_c^{j+\beta}(\partial \Rp)$ for some $0<\beta<1$, let $w:\R^N\to \R$ be given by
	\begin{align*}
		w(x)=\partial_{x_1}^{j}\Bigg[\int_{\partial \Rp} \frac{(x_1)_+^{k}h(x_1y+x')}{(1+|y|^2)^{\frac{N}{2}+m-k-i}} \;dy\Bigg]\quad\text{ for $x\in \R^N$}
	\end{align*}
	Then there is $C(N,h)=C>0$ such that
	\begin{align}\label{hm1}
		\sup_{{x,\widetilde x\in M},\ x\neq \widetilde x}\frac{|w(x)-w(\widetilde x)|}{|x-\widetilde x|^\beta}<C,\qquad M:=\{x=(x_1,x')\in\R^N\::\: 0<x_1< 1\},
	\end{align}
	and
	\begin{align}\label{hm11}
		\sup_{x_1,\widetilde x_1\in(0,1)}\frac{|w(x_1,x')-w(\widetilde x_1,x')|}{|x_1-\widetilde x_1|^\beta}<\frac{C}{1+|x'|^N}\qquad \text{ for all }x'\in \R^{N-1}.
	\end{align}
\end{lemma}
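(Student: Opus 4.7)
The strategy is to reduce \eqref{hm1} and \eqref{hm11} to the previous lemma (Lemma \ref{Holder:lem:1}) by expanding $\partial_{x_1}^j$ via the Leibniz and multinomial rules, and then to handle the unbounded region in $x'$ by a change of variables that makes the decay in $|x'|$ manifest.

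\textbf{Step 1: Leibniz/multinomial expansion.} Differentiating under the integral and applying Leibniz to the product $(x_1)_+^k\, h(x_1 y + x')$ together with the multinomial theorem for $\partial_{x_1}^{j-l}[h(x_1 y + x')] = \sum_{|\alpha|=j-l} \tfrac{(j-l)!}{\alpha!} y^\alpha (\partial^\alpha h)(x_1 y + x')$ yields
\[
w(x) \;=\; \sum_{l=0}^{\min(j,k)} \sum_{|\alpha|=j-l} c_{l,\alpha}\, (x_1)_+^{k-l} \int_{\partial\Rp} \frac{(\partial^\alpha h)(x_1 y + x')\, y^\alpha}{(1+|y|^2)^{\frac{N}{2}+(m-k-i)}}\, dy,
\]
with $c_{l,\alpha}$ a combinatorial constant. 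Since $h\in C_c^{j+\beta}$ and $|\alpha|=j-l\le j$, each $\partial^\alpha h\in C_c^{l+\beta}\subset C_c^\beta$, with support inside $B_R(0)$. Each summand has exactly the form treated in Lemma \ref{Holder:lem:1} (the constraints $0\le l\le k\le j\le m$ and $|\alpha|=j-l$ are met).

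\textbf{Step 2: Holder control on a bounded strip.} Applying Lemma \ref{Holder:lem:1} to each summand (with $g=\partial^\alpha h$, and using any radius $R'>R$ in place of $R$ in the definition of $L$, since the support of $\partial^\alpha h$ still lies inside $B_{R'}$), and summing over $(l,\alpha)$, yields a uniform Holder-$\beta$ seminorm bound for $w$ on $L_{R'}:=\{0\le x_1\le 1,\ |x'|<R'\}$. Choosing $R'=2R+2$ gives such a bound on $L_{2R+2}$.

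\textbf{Step 3: Decay at infinity via change of variables.} Substituting $z=x_1 y + x'$ in the original integral (before differentiating) gives
\[
\int_{\partial\Rp} \frac{(x_1)_+^k\, h(x_1 y+x')}{(1+|y|^2)^{\frac{N}{2}+(m-k-i)}}\, dy \;=\; x_1^{1+2m-k-2i}\int_{\partial\Rp} \frac{h(z)}{(x_1^2+|z-x'|^2)^{\frac{N}{2}+(m-k-i)}}\, dz.
\]
The exponent $1+2m-k-2i\ge 1+m> j$ (since $2i\le m-k$ and $j\le m$), so after applying $\partial_{x_1}^j$ via Leibniz, a positive power of $x_1$ remains in front of each piece, making $w$ smooth up to $x_1=0$. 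For $|x'|\ge 2R$ and $z\in\supp h$, $|z-x'|\ge |x'|/2\ge 1$, so $(x_1^2+|z-x'|^2)^{-\nu}$ and all its $(x_1,x')$-derivatives are bounded by $C|x'|^{-2\nu}$ uniformly in $x_1\in(0,1)$. Consequently, for all $(x_1,x')$ with $x_1\in(0,1)$ and $|x'|\ge 2R$,
\[
|w(x)|,\ |\partial_{x_1} w(x)|,\ |\nabla_{x'} w(x)| \;\le\; \frac{C}{1+|x'|^{N+2(m-k-i)}} \;\le\; \frac{C}{1+|x'|^N}.
\]

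\textbf{Step 4: Putting it together.} For \eqref{hm11}: if $|x'|\ge 2R$, the bound on $\partial_{x_1} w$ and the mean-value theorem give $|w(x_1,x')-w(\widetilde x_1,x')|\le C|x_1-\widetilde x_1|\,(1+|x'|^N)^{-1}\le C|x_1-\widetilde x_1|^\beta (1+|x'|^N)^{-1}$ (since $|x_1-\widetilde x_1|\le 1$); if $|x'|< 2R$, Step 2 gives $|w(x_1,x')-w(\widetilde x_1,x')|\le C|x_1-\widetilde x_1|^\beta$, and since $1+|x'|^N\le 1+(2R)^N$ the same inequality holds up to a constant. Estimate \eqref{hm1} then follows by splitting $|w(x)-w(\widetilde x)|$ via the triangle inequality into an $x_1$-difference (controlled by \eqref{hm11} uniformly in $x'$) and an $x'$-difference (controlled on $L_{2R+2}$ by Step 2 and outside by the $\nabla_{x'}w$ bound of Step 3).

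\textbf{Main obstacle.} The only delicate point is the bookkeeping in Step 3: after the change of variables, differentiating $j$ times in $x_1$ produces many terms, and one must verify that the remaining power of $x_1$ is always nonnegative and that all contributions decay at the same rate $|x'|^{-N-2(m-k-i)}$. This is ensured by the inequality $1+2m-k-2i\ge m+1>j$, which is the only place where the constraint $i\le\lfloor(m-k)/2\rfloor$ from the definition of $E_{k,s}$ enters decisively.
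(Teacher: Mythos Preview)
Your proposal is correct and follows essentially the same route as the paper's proof: the Leibniz/multinomial expansion reduces the bounded-strip estimate to Lemma~\ref{Holder:lem:1}, and the substitution $z=x_1y+x'$ handles the region $|x'|$ large by exhibiting $w$ as an integral of a function that is smooth and decays like $|x-z|^{-N}$ once $x$ is separated from $\supp h$. The paper carries out exactly these two steps (with the cut between regions at $|x'|=R+1$ rather than $|x'|=2R$) and glues them in the same way; your identification of $1+2m-k-2i\ge m+1>j$ as the crucial inequality is also the point on which the paper's smoothness claim for $\eta(x)=\partial_{x_1}^j\big[(x_1)_+^{1+2m-k-2i}|x-z|^{-N-2(m-k-i)}\big]$ implicitly rests.
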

\begin{proof} 
	Observe that, for $x\in \Rp$,
	\begin{align*}
		w(x)&=\sum_{l=0}^k\frac{k!}{(k-l)!}x_1^{k-l}\int_{\partial\Rp}\frac{\partial_{x_1}^{j-l}[h(x_1y+x')]}{|1+|y'|^2|^{\frac{N}{2}+(m-k-i)}}\ dy\\
		& =\sum_{l=0}^k\frac{k!}{(k-l)!}x_1^{k-l}\int_{\partial\Rp}\frac{
			\sum_{|\alpha|=j-l}\partial^\alpha h(x_1y+x')y^\alpha
		}{|1+|y'|^2|^{\frac{N}{2}+(m-k-i)}}\ dy.
	\end{align*}
	Let $R>0$ be such that $\supp(h)\subset B_R(0)\subset \R^{N-1}$ and $L$ as in \eqref{L:def}. Then, by Lemma \ref{Holder:lem:1}, we have that
	\begin{align}\label{hm2p}
		\sup_{{x,\widetilde x\in L},\ x_1\neq \widetilde x_1}\frac{|w(x)-w(\widetilde x)|}{|x-\widetilde x|^\beta}<C.
	\end{align}
	Let $x\in L_0:=\{x\in \R^N\;:\; 0\leq x_1\leq 1,\ |x'|\geq R+1\}$, then using the substitution $z=x_1y+x'$ we have
	\begin{align*}
		\partial_{x_1}^{j}\Bigg[\int_{\partial \Rp} \frac{(x_1)_+^{k}h(x_1y+x')}{(1+|y|^2)^{\frac{N}{2}+m-k-i}} \;dy\Bigg]
		=\int_{B_R(0)} \partial_{x_1}^{j}\Bigg[\frac{(x_1)_+^{1+2m-k-2i}}{|x-z|^{N+2(m-k-i)}} \Bigg]h(z)\;dz
	\end{align*}
	Fix $z\in B_R(0)$ and note that, for $x\in L_0$, we have $|x-z|\geq 1$ and therefore 
	\begin{align*}
		\text{$x\mapsto \eta(x):=\partial_{x_1}^{j}\Bigg[\frac{(x_1)_+^{1+2m-k-2i}}{|x-z|^{N+2(m-k-i)}} \Bigg]$ belongs to $C^\infty(L_0)$}\quad \text{ and }\quad
		|\nabla\eta(x)|\leq \frac{K}{|x-z|^N} \text{ for some }K>0.
	\end{align*}
	But then, for $x,\tilde{x}\in L_0$, $x\neq \tilde{x}$,
	\begin{align}
		\frac{|w(x)-w(\tilde{x})|}{|x-\tilde{x}|^{\beta}}&\leq K|x-\tilde{x}|^{1-\beta}\int_{\partial \Rp}\frac{|h(z)|}{|z-x|^{N}}\ dz\leq K\|h\|_{\infty} \int_{B_R(0)}|x-z|^{-N}\ dz\nonumber\\
		&\leq K\|h\|_{\infty}|B_R(0)|\operatorname{dist}(x,\partial B_R(0))^{-N},\label{hm3}
	\end{align}
	and \eqref{hm11} follows from \eqref{hm2p} and \eqref{hm3}.  Finally, let $M:=\{0<x_1<1\}$, then
	\begin{align*}
		\sup_{{x,\widetilde x\in M},\ x\neq \widetilde x}\frac{|w(x)-w(\widetilde x)|}{|x-\widetilde x|^\beta}
		\leq \sup_{{x,\widetilde x\in M},\ x\neq \widetilde x}\frac{|w(x)-w(0,x')|+|w(0,\widetilde x')-w(\widetilde x)|}{|x-\widetilde x|^\beta}+\frac{|h(x')-h(\widetilde x)|}{|x-\widetilde x|^\beta}<C
	\end{align*}
	by \eqref{hm2p},\eqref{hm3}, and the proof is finished.
\end{proof}

\begin{lemma}\label{s-harmonic-gen1-compact}
	Let $N\in \N$, $m\in \N_0$, $\sigma\in(0,1]$, $s=m+\sigma$, and $j\in\{0,\ldots,2m+1\}$. If $h\in L^{\infty}(\partial \Rp)$ with compact support, then the function $u:\R^N\to \R$ given by
	\begin{align*}
		u(x)=\int_{\partial \Rp} \frac{(x_1)_+^{j+\sigma-1}}{|x-z|^{N+2(j-m-1)}}h(z) \;dz\quad\text{ for $x\in \R^N$}
	\end{align*}
	is in $\cL^1_s \cap C^{\infty}(\Rp)$ and $u$ is pointwisely $s$-harmonic in $\Rp$. In particular,
	\begin{align*}
		v(x)=(x_1)_+^s\int_{\partial \Rp} \frac{(x_1)_+^{k}}{|x-z|^{N+2k}}h(z) \;dz\quad\text{ for $x\in \R^N$}
	\end{align*}
	is pointwisely $s$-harmonic in $\Rp$ for any $k\in\{-m-1,\ldots,0,\ldots,m\}$.
\end{lemma}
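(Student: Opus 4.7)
The plan is to identify $K(x,z):=\frac{(x_1)_+^{j+\sigma-1}}{|x-z|^{N+2(j-m-1)}}$ as a Kelvin transform of the $s$-harmonic function $y\mapsto(y_1)_+^{j+\sigma-1}$ supplied by Proposition \ref{s-harmonic2}, and then to pass the resulting $s$-harmonicity of $K(\cdot,z)$ through the $h$-integral by Fubini. Taking $K_s$ and $\kappa$ of \eqref{kappa:K:def} with $c=1$ and $v=-z$, and using $z_1=0$ so that $(\kappa x)_1=x_1/|x-z|^2$, a direct computation gives $K_s[(\cdot)_1^{j+\sigma-1}](x)=K(x,z)$ on $\R^N\setminus\{z\}$. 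A one-dimensional check combined with Lemma \ref{lem:ldh} shows $(\cdot)_1^{j+\sigma-1}\in\cL^1_s$ precisely when $j\le 2m+1$, so Lemma \ref{KsinL1s} gives $K(\cdot,z)\in\cL^1_s$, and the distributional version of Proposition \ref{lem:sharm} (fed with Proposition \ref{s-harmonic2} via Lemma \ref{ptod}) yields that $K(\cdot,z)$ is distributionally $s$-harmonic in $\kappa(\Rp)=\Rp$. Because $z\notin\Rp$ and $K(\cdot,z)\in C^\infty(\R^N\setminus\{z\})$, a second application of Lemma \ref{ptod} upgrades this to pointwise $s$-harmonicity on $\Rp$.

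Next I would establish $u\in C^\infty(\Rp)\cap\cL^1_s$ and then transfer $s$-harmonicity from $K(\cdot,z)$ to $u$. Smoothness on $\Rp$ follows by differentiating under the integral sign on any compact neighborhood of a point in $\Rp$: the kernel $K(\cdot,z)$ and all its $x$-derivatives are uniformly controlled on such a neighborhood for $z\in\supp h$ since $\supp h\subset\partial\Rp$ is at positive distance from the neighborhood and $h\in L^\infty$ is compactly supported. The bound $|u|\le\|h\|_\infty\int_{\supp h}K(\cdot,z)\,dz$ then produces $u\in\cL^1_s$ once one controls (i) the behavior at infinity via $K(x,z)\lesssim (x_1)_+^{j+\sigma-1}/|x|^{N+2(j-m-1)}$ and (ii) the behavior near $\partial\Rp\cap\supp h$ via the polar-scaling estimate $\int_0^\varepsilon r^{2m+\sigma-j}\,dr<\infty$, finite precisely because $j\le 2m+1$. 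For $\varphi\in C^\infty_c(\Rp)$, Fubini together with Lemma \ref{ibyp} applied to $K(\cdot,z)$ on $\Rp$ then yields
\[
\int_{\R^N}u(x)(-\Delta)^s\varphi(x)\,dx=\int_{\supp h}h(z)\int_{\R^N}(-\Delta)^sK(x,z)\,\varphi(x)\,dx\,dz=0,
\]
so $u$ is distributionally, and hence by Lemma \ref{ptod} pointwisely, $s$-harmonic in $\Rp$. The ``in particular'' statement for $v$ follows by setting $j:=k+m+1\in\{0,\ldots,2m+1\}$, which produces $s+k=j+\sigma-1$ and $N+2k=N+2(j-m-1)$, so that $v$ is exactly the function $u$ for this value of $j$.

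The main obstacle will be the rigorous justification of the Fubini swap, which hinges on absolute convergence of $\iint K(x,z)\,|(-\Delta)^s\varphi(x)|\,|h(z)|\,dx\,dz$; this reduces to the same uniform-in-$z$ local integrability of $K(\cdot,z)$ near $\partial\Rp\cap\supp h$ that underpins the $\cL^1_s$ estimate, combined with the decay $|(-\Delta)^s\varphi(x)|\lesssim |x|^{-N-2s}$ at infinity coming from $\varphi\in C^\infty_c(\Rp)$. No new combinatorial identities are required at this stage: the Kelvin-transform identification sidesteps the need to build $u$ out of the $E_{k,s}$ summands, so the entire argument rests on Propositions \ref{s-harmonic2} and \ref{lem:sharm} together with the measure-theoretic lemmas \ref{ibyp}, \ref{ptod}, \ref{KsinL1s} and \ref{lem:ldh}.
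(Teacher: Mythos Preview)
Your proof is correct and follows essentially the same route as the paper: identify $K(\cdot,z)$ as a Kelvin transform of $(y_1)_+^{j+\sigma-1}$ (the paper takes $v=0$ and then translates by $z$, while you take $v=-z$ directly, which is equivalent), then pass the distributional $s$-harmonicity through the $h$-integral by Fubini and upgrade to pointwise via Lemma~\ref{ptod}. Your $\cL^1_s$ verification via Lemma~\ref{KsinL1s} plus the local radial estimate $\int_0^\varepsilon r^{2m+\sigma-j}\,dr<\infty$ is in fact a bit cleaner than the paper's appeal to Lemma~\ref{lem:prop-edenhofer}, which as stated only covers $j\geq m+1$.
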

\begin{proof}
	Observe that
	\begin{align*}
		\int_{\R^N}\frac{(x)_+^{2m-j+\sigma}}{1+|x|^{1+2s}}\ dx
		=\int_{0}^\infty\frac{x^{2m-j+\sigma}}{1+x^{1+2s}}\ dx
		\leq \int_{0}^1\frac{x^{2m-j+\sigma}}{1+x^{1+2s}}\ dx
		+\int_{1}^\infty x^{-j-\sigma-1}\ dx<\infty.
	\end{align*}
	Then $(x)_+^{2m-j+\sigma}\in \cL^1_s(\R)$ and, by Lemma \ref{lem:ldh}, 
	\begin{align}\label{cref}
		(x_1)_+^{2m-j+\sigma}\in \cL^1_s(\R^N) .
	\end{align}
	Moreover, by Lemma \ref{lem:prop-edenhofer},
	\begin{align*}
		|u(x)|\leq \|h\|_{L^\infty(\partial\Rp)} \int_{\partial \Rp} \frac{(x_1)_+^{j+\sigma-1}}{|x-z|^{N+2(j-m-1)}} \;dz\leq C (x_1)_+^{2m-j+\sigma},
	\end{align*}
	for some $C(h,N,s)=C>0$. Therefore, using \eqref{cref},
	\begin{align*}
		\int_{\R^N}\frac{|u(x)|}{1+|x|^{N+2s}}\leq C\int_{\R^N}\frac{(x_1)_+^{2m-j+\sigma}}{1+|x|^{N+2s}}<\infty
	\end{align*}
	and thus $u\in\cL^1_s$.  Let $K:=\operatorname{supp}(h)\subset \partial \Rp$. To show that $u$ is $s$-harmonic, let $j\in\{0,\ldots,2m+1\}$ and $\kappa$, $K_s$ as in \eqref{kappa:K:def} with $v=0$ and $c=1$.  Then 
	\begin{align*}
		\kappa x=\frac{x}{|x|^2}\qquad \text{ and }\qquad K_s((x_1)_+^{j+\sigma-1})=\frac{(x_1)_+^{j+\sigma-1}}{|x|^{N+2(j-m-1)}}
	\end{align*}
	is distributionally $s$-harmonic in $\kappa(\Rp)=\Rp$, by Propositions \ref{lem:sharm} and \ref{s-harmonic2}. Therefore, by the Fubini's theorem, for any $\psi\in C^\infty_c(\Rp)$
	\begin{align*}
		&\int_{\Rp}u(x)(-\Delta)^{s}\psi(x)\ dx=\int_{\Rp}\int_{\partial \Rp}\frac{(x_1)_+^{j+\sigma-1}}{|x-z|^{N+2(j-m-1)}}h(z)\ dz (-\Delta)^s\psi(x)\ dx\\
		&=\int_{\partial \Rp} \int_{\Rp} \frac{(x_1)_+^{j+\sigma-1}}{|x|^{N+2(j-m-1)}} (-\Delta)^s\psi(x+z)\ dx\ h(z) \ dz=0,
	\end{align*}
	which implies that $u$ is distributionally $s$-harmonic in $\Rp$. If $N=1$, then $u\in C^{\infty}(\Rp)$,  by definition, and if $N\geq 2$ then 
	\begin{align*}
		\partial^{\alpha} u(x)= \int_{K} \partial^{\alpha}_x\Big(\frac{(x_1)_+^{j+\sigma-1}}{|x-z|^{N+2(j-m-1)}}\Big)h(z)\ dz\qquad \text{for $x\in\Rp$ and $\alpha\in \N_0^N$},
	\end{align*}
	and then $u\in C^{\infty}(\Rp)$ also in this case.  Therefore, by Lemma \ref{ptod}, $u$ is pointwisely $s$-harmonic. 
\end{proof}

We are ready to show the main theorem of this section.

\begin{thm}\label{thm:edenhofer}
	Let $m\in\N_0$, $\sigma,\beta\in(0,1]$, $s=m+\sigma$, $k\in \{0,\ldots,m\}$, $h\in C_c^{m+\beta}(\partial\Rp)$, and let $v_k:\R^N\to \R$ be given by
	\begin{equation*}
		v_k(x)=\int_{\partial\Rp}E_{k,s}(x,y) h(y)\ dy\qquad \text{ for }x\in\R^N.
	\end{equation*} 
	Then $v_k\in \cL^1_s \cap C^{\infty}(\Rp)$ is pointwisely $s$-harmonic in $\Rp$ and
	\begin{equation}\label{lem:edenhofer-claim}
		D^{j+\sigma-1}v_k=\delta_{j,k}h\quad \text{ on $\partial\Rp$ \ \  for $j\in\{0,\ldots,m\}$.}
	\end{equation}
	Moreover, there is $C=C(N,h,k)>0$ such that, for all $x'\in\R^{N-1}$,,
	\begin{align}\label{reg:claim} 
		\sup_{x_1,\widetilde x_1\in(0,1),\ x_1\neq \widetilde x_1}
		\frac{|\partial_{x_1}^m[(x_1)_+^{1-\sigma} v_k(x_1,x')]-\partial_{x_1}^m[(\widetilde x_1)_+^{1-\sigma} v_k(\widetilde x_1,x')]|}{|x_1-\widetilde x_1|^\beta}\leq \frac{C}{1+|x'|^N}
	\end{align}
	and, for $k\in \{0,\ldots,m\}$,
	\begin{equation}\label{lem:edenhofer-decay}
		|v_k(x)|\leq C\frac{(x_1)_+^{\sigma+k-1}}{1+|x|^{N+m-k}} \qquad \text{ for $x\in \R^N$.}
	\end{equation}
\end{thm}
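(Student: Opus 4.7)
The plan is to verify the four claims in sequence: $s$-harmonicity together with basic regularity, the trace identity, the Hölder bound, and the decay estimate. Each summand of $E_{k,s}$ fits, after pulling out $(x_1)_+^s$, into the class of kernels covered by Lemma \ref{s-harmonic-gen1-compact} (or, alternatively, arises as the $K_s$-image of the boundary Poisson kernels on the ball, whose $s$-harmonicity is known); combined with linearity this gives that $v_k$ is pointwisely $s$-harmonic on $\Rp$. Smoothness on $\Rp$ follows by differentiating under the integral sign, using that $h$ has compact support and the integrand is smooth for $y\in\partial\Rp$, $x\in\Rp$, while $\cL^1_s$ membership is a consequence of the decay bound \eqref{lem:edenhofer-decay}.

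The trace identity \eqref{lem:edenhofer-claim} is the heart of the argument. The substitution $y = x' + x_1 z$ produces the clean factorization
\[
x_1^{1-\sigma}\,v_k(x_1,x') \;=\; (x_1)^{k}\sum_{i=0}^{\lfloor(m-k)/2\rfloor}\alpha_{m-k,i}\int_{\R^{N-1}}\frac{h(x'+x_1 z)}{(1+|z|^2)^{N/2+m-k-i}}\,dz.
\]
Applying $\partial_{x_1}^j$ via the Leibniz rule, passing the derivative inside the integral, and Taylor expanding $\partial_{x_1}^{j-l} h(x'+x_1 z) = \sum_{|\alpha|=j-l}\tfrac{(j-l)!}{\alpha!}\,z^\alpha \partial^\alpha h(x'+x_1 z)$, the limit $x_1 \to 0^+$ kills every term except the one in which $\partial_{x_1}^k$ acts on $(x_1)^k$; this already forces $j \ge k$. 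Parity in $z$ annihilates every contribution with $|\alpha|$ odd, and for $|\alpha| = j-k$ even, writing $\alpha = 2\gamma$, Lemma \ref{int:lem} evaluates the remaining $\R^{N-1}$-integral in closed form while Lemma \ref{last:lem} collapses the sum over $i$ to $\delta_{j,k}\,\pi^{(1-N)/2}$. Combining the constants produces $D^{j+\sigma-1} v_k(0,x') = \delta_{j,k}\,h(x')$. This is where I expect the main obstacle to lie: the precise normalizations $\alpha_{m-k,i}$ must conspire with the Gamma-function ratios from Lemma \ref{int:lem} so that Lemma \ref{last:lem} produces exactly the Kronecker delta, on the nose.

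The Hölder estimate \eqref{reg:claim} is obtained by applying Lemma \ref{Holder:lem} (with $j=m$, which is where the hypothesis $h \in C_c^{m+\beta}$ is used) to each summand of $x_1^{1-\sigma} v_k$ from the displayed formula, and summing; the decay $1/(1+|x'|^N)$ in Lemma \ref{Holder:lem} propagates directly to the sum. For the decay estimate \eqref{lem:edenhofer-decay} I would argue summand by summand: the near-boundary factor $(x_1)_+^{\sigma+k-1}$ arises from the factorization $v_k(x) = (x_1)_+^{\sigma+k-1} F(x_1,x')$ that the same substitution produces, with $F$ uniformly bounded because $h$ is bounded and the weight $(1+|z|^2)^{-N/2-m+k+i}$ is integrable on $\R^{N-1}$; for $x$ far from $\supp(h)$ one uses $|y-x|^2 \ge x_1^2 + c|x'|^2 \gtrsim |x|^2$ on $\supp(h)$ to extract the far-field decay of order $|x|^{-(N+m-k)}$ from each kernel. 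Once \eqref{lem:edenhofer-decay} is established, $\cL^1_s$ membership is immediate.
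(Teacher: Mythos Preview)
Your outline matches the paper's proof almost exactly: the substitution $y=x'+x_1z$ followed by Leibniz, parity, and Lemmas~\ref{int:lem} and~\ref{last:lem} is precisely how the paper establishes \eqref{lem:edenhofer-claim}; the Hölder bound \eqref{reg:claim} is obtained in the paper by applying Lemma~\ref{Holder:lem} termwise just as you describe; and the decay \eqref{lem:edenhofer-decay} is handled via the same factorization.

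There is one small inaccuracy in your $s$-harmonicity step. You write that each summand of $E_{k,s}$, ``after pulling out $(x_1)_+^s$'', fits Lemma~\ref{s-harmonic-gen1-compact}. That is only correct for the $i=0$ term: the $i$-th summand has numerator exponent $s+m-k-2i$ and denominator shift $m-k-i$, so after extracting $(x_1)_+^s$ the residual exponent $m-k-2i$ does not match the denominator shift $m-k-i$ required by the lemma's second display. The paper instead pulls out $(x_1)_+^{\,s-i}$, writes the summand as
\[
(x_1)_+^{\,s-i}\int_{\partial\Rp}\frac{(x_1)_+^{m-k-i}}{|x-y|^{N+2(m-k-i)}}\,h(y)\,dy,
\]
applies Lemma~\ref{s-harmonic-gen1-compact} with $s$ replaced by $s-i$ to conclude that this term is $(s-i)$-harmonic, and then uses the elementary observation (recorded in \eqref{sm1tos}) that $(s-1)$-harmonic implies $s$-harmonic to step up from $(s-i)$-harmonicity to $s$-harmonicity. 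This extra step is short but not optional.
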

\begin{proof}
	If $N=1$ and $k\in \{0,\ldots,m\}$, then, by Lemma \ref{last:lem} (with $j=k$), we have for $x>0$ that
	\begin{align*}
		v_k(x)&=\sum_{i = 0}^{\lfloor\frac{m-k}{2}\rfloor}\alpha_{m-k,i}  \frac{x^{s+m-k-2i}}{x^{1+2(m-k-i)}} h(0)=x^{s-m+k-1}h(0)\sum_{i = 0}^{\lfloor\frac{m-k}{2}\rfloor}\alpha_{m-k,i}=x^{s-m+k-1}h(0),
	\end{align*}	
	which is $s$-harmonic, by Proposition \ref{s-harmonic2}, and \eqref{lem:edenhofer-claim}, \eqref{reg:claim}, \eqref{lem:edenhofer-decay} are clearly satisfied. Now, let $N\geq 2$, $k\in \{0,\ldots,m\}$, and note that
	\begin{align*}
		v_k(x)=\sum_{i = 0}^{\lfloor\frac{m-k}{2}\rfloor}\alpha_{m-k,i}x_1^{s-i}\int_{\partial\Rp}\frac{(x_1)_+^{m-k-i}}{|y-x|^{N+2(m-k-i)}} h(y)\ dy.
	\end{align*}
	Since $m-k-i < m+\sigma-i=s-i$, Lemma~\ref{s-harmonic-gen1-compact} implies that each summand is $(s-i)$-harmonic, which in turn implies that $v_k$ is pointwisely $s$-harmonic in $\Rp$ (as we argued in \eqref{sm1tos}). 
	
	We now show \eqref{lem:edenhofer-decay}. Observe that, by a change of variables,
	\begin{align*}
		|v_k(x)|\leq x_1^{\sigma+k-1}\sum_{i = 0}^{\lfloor\frac{m-k}{2}\rfloor}\alpha_{m-k,i}\int_{\partial\Rp}\frac{|h(x_1y+x')|}{|1+|y'|^2|^{\frac{N}{2}+(m-k-i)}}\ dy\leq Cx_1^{\sigma+k-1}.
	\end{align*}
	But then, \eqref{lem:edenhofer-decay} follows from $(1+|y'|^2)^{\frac{N}{2}+m-k-i}\geq (1+|y'|^2)^{\frac{N+m-k}{2}}$
	and the fact that $h$ is uniformly bounded. The claim \eqref{reg:claim} follows from Lemma \ref{Holder:lem} and the fact that, for $x\in\Rp$,
	\begin{align}\label{ref1}
		x_1^{1-\sigma}v_k(x)= \sum_{i = 0}^{\lfloor\frac{m-k}{2}\rfloor}\alpha_{m-k,i}\int_{\partial\Rp}\frac{x_1^{k}h(x_1y+x')}{|1+|y'|^2|^{\frac{N}{2}+(m-k-i)}}\ dy
	\end{align}
	and 
	\begin{align*}
		\partial_{1}^m[x_1^{1-\sigma}v_k(x)]
		=\sum_{i = 0}^{\lfloor\frac{m-k}{2}\rfloor}\frac{\alpha_{m-k,i}}{m!}\sum_{l=0}^k\frac{k!}{(k-l)!}x_1^{k-l}\int_{\partial\Rp}\frac{\partial_{x_1}^{m-l}[h(x_1y+x')]}{|1+|y'|^2|^{\frac{N}{2}+(m-k-i)}}\ dy.
	\end{align*}
	
	Finally, we show \eqref{lem:edenhofer-claim}. Fix $x'\in\partial\Rp$ and let $j\in\{k,\ldots,m\}$, then, by \eqref{ref1},
	\begin{align}
		D^{j+\sigma-1}v_k(x')&=\frac{1}{k!}{\lim_{x_1\to 0^+}}\partial_1^{k}[x_1^{1-\sigma}v_k(x')]\nonumber\\
		&=\lim_{x_1\to 0}\sum_{i = 0}^{\lfloor\frac{m-k}{2}\rfloor}\frac{\alpha_{m-k,i}}{j!}\int_{\partial\Rp}\frac{\partial_{x_1}^j(x_1^{k}h(x_1y+x'))}{|1+|y'|^2|^{\frac{N}{2}+(m-k-i)}}\ dy\label{ref3}\\
		&=\lim_{x_1\to 0}\sum_{i = 0}^{\lfloor\frac{m-k}{2}\rfloor}\frac{\alpha_{m-k,i}}{j!}\sum_{l=0}^k\frac{k!}{(k-l)!}x_1^{k-l}\int_{\partial\Rp}\frac{\partial_{x_1}^{j-l}[h(x_1y+x')]}{|1+|y'|^2|^{\frac{N}{2}+(m-k-i)}}\ dy\nonumber\\
		&=\sum_{|{\gamma}|=j-k}\partial^{\gamma} h(x')\sum_{i = 0}^{\lfloor\frac{m-k}{2}\rfloor}\frac{\alpha_{m-k,i}k!}{j!}\int_{\partial\Rp}\frac{y^{\gamma}}{|1+|y'|^2|^{\frac{N}{2}+(m-k-i)}}\ dy.\label{ref2}
	\end{align}
	Observe that the integral in \eqref{ref2} is finite, by Lemma~\ref{int:lem}, and the interchange between derivative and integral in \eqref{ref3} can be justified as in the proof of Lemma~\ref{Holder:lem} using that $h$ is compactly supported. 
	
	If there is some ${\gamma}_i$ odd, then by a change of variables,
	\begin{align*}
		\int_{\R^{N-1}}\frac{y^{\gamma}}{|1+|y|^2|^{\frac{N}{2}+(m-k-i)}}\ dy=-\int_{\R^{N-1}}\frac{y^{\gamma}}{|1+|y|^2|^{\frac{N}{2}+(m-k-i)}}\ dy, 
	\end{align*}
	that is, $\int_{\R^{N-1}}\frac{y^\beta}{|1+|y|^2|^{\frac{N}{2}+(m-k-i)}}\ dy=0$.  So we may assume that $j-k$ is even.  Then, by Lemmas~\ref{int:lem} and~\ref{last:lem}, 
	\begin{align*}
		D^{j+\sigma-1}v_k(x')&=\sum_{|\gamma|=\frac{j-k}{2}}\partial^{2\gamma} h(x')\sum_{i = 0}^{\lfloor\frac{m-k}{2}\rfloor}\frac{\alpha_{m-k,i}k!}{j!}\int_{\partial\Rp}\frac{y^{2\gamma}}{|1+|y'|^2|^{\frac{N}{2}+(m-k-i)}}\ dy\\
		&=\sum_{|\gamma|=\frac{j-k}{2}}\partial^{2\gamma} h(x')\sum_{i = 0}^{\lfloor\frac{m-k}{2}\rfloor}\frac{\alpha_{m-k,i}k!}{j!}\frac{\pi^{\frac{N-1}{2}}(2\gamma)!}{2^{j-k}\gamma!}
		\frac{\Gamma \left(m-i+\frac{1-j-k}{2}\right)}{\Gamma \left(m-i-k+\frac{N}{2}\right)}
		\\
		&=\sum_{|\gamma|=\frac{j-k}{2}}\partial^{2\gamma} h(x')\frac{k!}{j!}\frac{\pi^{\frac{N-1}{2}}(2\gamma)!}{2^{j-k}\gamma!}
		\sum_{i = 0}^{\lfloor\frac{m-k}{2}\rfloor}
		\alpha_{m-k,i}
		\frac{\Gamma \left(m-i+\frac{1-j-k}{2}\right)}{\Gamma \left(m-i-k+\frac{N}{2}\right)}=h(x') \delta_{j,k}.
	\end{align*}
\end{proof}

\begin{proof}[Proof of Theorem \ref{exp:sol:thm}]
	Let $u$ be given by \eqref{u:eden}. Then, $u$ satisfies \eqref{eq1} and \eqref{es1}, by Theorem \ref{thm:edenhofer}. It remains to show uniqueness.  Let $v\in C^{2s+\beta}(\Rp)$ denote a solution of \eqref{eq1} satisfying \eqref{es1} and set $z=u-v$.  Since $u$ and $v$ satisfy \eqref{reg:claim} and \eqref{es1} respectively, there is $K>0$ such that, for $x\in\{0\leq x_1\leq 1\}$,
	\begin{align*}
		|\partial_{1}^m[(\tau x_1)_+^{1-\sigma} z(\tau x_1,x')]|
		&\leq |\partial_{1}^{m-1}[(x_1)_+^{1-\sigma} u(x_1,x')]-h_m(x')|+|\partial_{1}^{m-1}[(x_1)_+^{1-\sigma} v(x_1,x')]-h_m(x')|\\
		&\leq 
		K \frac{x_1^{\beta}}{1+|x'|^N}.
	\end{align*}
	Then, since $D^{s-2}z=0$ on $\partial \Rp$,
	\begin{align*}
		\partial_{1}^{m-1}[(x_1)_+^{1-\sigma} z(x_1,x')]
		=\int_0^1 \partial_{1}^m[(\tau x_1)_+^{1-\sigma} z(\tau x_1,x')] x_1\ d\tau\leq K \frac{x_1^{1+\beta}}{1+|x'|^N}\quad \text{for }x\in\{0\leq x_1\leq 1\}.
	\end{align*}
	Iterating this argument, we obtain
	\begin{align}\label{pes}
		|z(x)|\leq K \frac{x_1^{s-1+\beta}}{1+|x|^{N}}\qquad \text{ for }x\in \{0\leq x_1\leq 1\}.
	\end{align}
	Thus, by \eqref{pes}, \eqref{lem:edenhofer-decay}, and \eqref{es1}, there is $C>K$ such that
	\begin{align*}
		|z(x)|\leq C \frac{\max\{x_1^{s-1+\beta},x_1^s\}}{1+|x|^{N}}\qquad \text{ for }x\in \Rp,
	\end{align*}
	but then $z\equiv 0$ in $\R^N$, by Lemma \ref{lem:bvpN}, and the uniqueness follows. 
\end{proof}

Observe that 
\begin{align*}
	E_{m,s}(x,y)
	=\frac{2}{\omega_N}\frac{(x_1)_+^{s}}{|x-y|^{N}}\quad \text{ and }\quad 
	E_{m-1,s}(x,y)
	=\frac{2N}{\omega_N}\frac{(x_1)_+^{s+1}}{|x-y|^{N+2}}\qquad \text{for $x\in \R^N$ and $y\in \partial \Rp$}.
\end{align*}
These kernels are connected via the trace operators with the Green function $\cG_s$,
see \eqref{first} and \eqref{second} below; however, the relationship between $E_{k,s}$ and $\cG_s$ is not so simple for $k\leq m-2$, see Remark \ref{ns:rem}.

\begin{lemma}\label{lem:edenhofer-rep}
	Let $m\in\N_0$, $\sigma\in(0,1]$, $s=m+\sigma$, $x\in \R^N$, and $z\in \partial \Rp$, then
	\begin{equation}\label{first}
		\frac{E_{m,s}(x,z)}{\Gamma(s+1)\Gamma(s) }= D^s[\cG_s(x,\cdot)](z)=\lim_{w\to z,\ w\in \Rp} \frac{\cG_s(x,w)}{w_1^s}
	\end{equation}
	and, if $m\geq 1$,
	\begin{equation}\label{second}
		\frac{\Gamma(s)\Gamma(s+2)}{1-s} D^{s+1}_z\cG_s(x,z)=\Gamma(s)\Gamma(s-1)D^{s-1}_{z}(-\Delta)_zG_s(x,z)]=E_{m-1,s}(x,z).
	\end{equation}
\end{lemma}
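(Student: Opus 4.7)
The plan is to derive both identities from a single input: the power-series expansion of $\cG_s$ near $\partial\Rp$. Since $\psi(x,w)=4x_1w_1/|x-w|^2\to 0$ as $w_1\to 0^+$ (with $x\in\Rp$ fixed), we may expand $(v+1)^{-N/2}=\sum_{k\ge 0}\binom{-N/2}{k} v^k$ inside the defining integral and integrate term by term, obtaining the convergent representation
\begin{equation*}
	\cG_s(x,w)=k_{N,s}\sum_{k=0}^{\infty}\binom{-N/2}{k}\frac{(4x_1w_1)^{s+k}}{(s+k)|x-w|^{N+2k}},
\end{equation*}
valid in a one-sided neighbourhood of $\partial\Rp$.

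For \eqref{first}, the $k=0$ term gives $\cG_s(x,w)/w_1^s\to k_{N,s}4^s x_1^s/(s|x-z|^N)$ as $w\to z$, and the constant simplifies via $k_{N,s}4^s\pi^{N/2}/\Gamma(N/2)=1/\Gamma(s)^2$ to yield exactly $E_{m,s}(x,z)/(\Gamma(s+1)\Gamma(s))$. The identification of this limit with $D^s[\cG_s(x,\cdot)](z)$ will follow from Lemma~\ref{trace:lem} applied with $k=m+1$: the bound $|\cG_s(x,w)|\le Cw_1^s$ yields $w_1^{1-\sigma}\cG_s(x,w)=O(w_1^{m+1})$, which forces $D^{j+\sigma-1}\cG_s(x,\cdot)(z)=0$ for all $j\in\{0,\ldots,m\}$.

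For the first equality of \eqref{second}, I will not try to interpret $D^{s+1}$ through a limit à la Lemma~\ref{trace:lem} (this fails because $D^s\cG_s\neq 0$); instead I will evaluate $D^{s+1}\cG_s(x,z)=\frac{1}{(m+2)!}\partial_1^{m+2}[w_1^{1-\sigma}\cG_s]\big|_{w_1=0}$ by applying Leibniz to the series for $w_1^{1-\sigma}\cG_s(x,w)=k_{N,s}\sum_k\binom{-N/2}{k}(4x_1)^{s+k}w_1^{m+1+k}/((s+k)|x-w|^{N+2k})$. Only $k=0,1$ survive; using $\partial_{w_1}|x-w|^{-N}\big|_{w_1=0}=Nx_1/|x-z|^{N+2}$ the two contributions combine to $k_{N,s}N4^s(1-s)x_1^{s+1}/[s(s+1)|x-z|^{N+2}]$, and the stated prefactor $\Gamma(s)\Gamma(s+2)/(1-s)$ turns this into $E_{m-1,s}$ after using $\Gamma(s+2)=s(s+1)\Gamma(s)$. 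The second equality of \eqref{second} requires computing $(-\Delta)_w$ of each summand of the series via $\Delta(uv)=u\Delta v+v\Delta u+2\nabla u\cdot\nabla v$ with $u=w_1^{s+k}$, $v=|x-w|^{-(N+2k)}$, and then extracting the coefficient of $w_1^{s-1}$ in the full $w_1$-Taylor expansion at fixed $w'$; this coefficient equals $D^{s-1}(-\Delta)\cG_s(x,z)$ because the lower-order piece $\sim w_1^{s-2}$ of $(-\Delta)_w\cG_s$, once multiplied by $w_1^{1-\sigma}$, becomes $w_1^{m-1}$ and is annihilated by $\partial_1^m$ at $w_1=0$.

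The main obstacle is precisely the bookkeeping for this last step. The naive computation (keeping only the explicit $w_1$-powers in the three Laplacian subterms) gives a value that is off from the target by a factor of two; the missing piece comes from Taylor expanding $|x-w|^{-N}$ to first order in $w_1$, which generates an additional contribution to the $w_1^{s-1}$ coefficient through the $v\Delta u$ term at $k=0$. Carefully combining the three relevant contributions (from $v\Delta u$ at $k=1$, $2\nabla u\cdot\nabla v$ at $k=0$, and $v\Delta u$ at $k=0$ with the first Taylor correction of $|x-w|^{-N}$) yields $(s-1)Nk_{N,s}4^sx_1^{s+1}/|x-z|^{N+2}$, and the identity $(s-1)\Gamma(s-1)=\Gamma(s)$ together with $k_{N,s}4^s\Gamma(s)^2=\Gamma(N/2)/\pi^{N/2}$ concludes that $\Gamma(s)\Gamma(s-1)D^{s-1}(-\Delta)\cG_s(x,z)=E_{m-1,s}(x,z)$.
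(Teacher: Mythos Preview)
Your proof is correct, and for \eqref{first} and the $D^{s+1}$ part of \eqref{second} it is essentially the paper's computation in different clothing: the paper uses the substitution $v=\psi t$ (writing $\cG_s(x,w)=k_{N,s}4^s x_1^s w_1^s\int_0^1 t^{s-1}(4x_1w_1t+|x-w|^2)^{-N/2}\,dt$) and then takes the limit or one normal derivative directly, which is equivalent to your binomial expansion. Your appeal to Lemma~\ref{trace:lem} to identify the limit $\cG_s/w_1^s$ with the trace $D^s\cG_s$ is an improvement in precision over the paper, which leaves that step implicit; note only that the vanishing of the lower traces needs smoothness of $w_1^{1-\sigma}\cG_s$ near $w_1=0$, which you already have from the series, not just the pointwise bound $O(w_1^{m+1})$.

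The genuine divergence is in the middle equality of \eqref{second}. The paper imports the recurrence $-\Delta_y\cG_s(x,y)=\cG_{s-1}(x,y)-k_{N,s}4^s(s-1)P_{s-1}(y,x)$ (with $P_{s-1}(y,x)=y_1^{s-2}x_1^{s-1}(x_1+y_1)|x-\bar y|^{-N}$) from \cite{AJS16b}, then applies $D^{s-1}_z$ to each piece: for $\cG_{s-1}$ it quotes \eqref{first} at level $s-1$, and for $P_{s-1}$ a short Leibniz computation gives $D^m_z[z_1^{m-1}(x_1+z_1)|z-\bar x|^{-N}]=|x-z|^{-N}-Nx_1^2|x-z|^{-(N+2)}$, after which the two terms combine to $E_{m-1,s}$. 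Your route stays inside the series expansion and computes $(-\Delta)_w$ of $w_1^{s+k}|x-w|^{-(N+2k)}$ by the product rule, then extracts the $w_1^m$ Taylor coefficient of $w_1^{1-\sigma}(-\Delta)\cG_s$. This is more self-contained---no external recurrence needed---at the price of the bookkeeping you flag: the crucial cross-term from expanding $|x-w|^{-N}$ to first order in $w_1$ in the $v\Delta u|_{k=0}$ piece, without which the answer is indeed off by a factor of two. Both approaches give $(s-1)k_{N,s}N4^s x_1^{s+1}|x-z|^{-(N+2)}$, and the constants match via $(s-1)\Gamma(s-1)=\Gamma(s)$ and $k_{N,s}4^s\Gamma(s)^2=\Gamma(N/2)/\pi^{N/2}=\alpha_{1,0}/N$.
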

\begin{proof}
	We argue as in \cite[Lemma 4.1]{AJS16b}, let $z\in \partial \Rp$ and $x\in\Rp$, then
	\begin{align*}
		D^s[\cG_s(x,\cdot)](z)=k_{N,s}4^sx_1^{s}\lim_{w\to z,\ w\in \Rp} \int_0^1\frac{t^{s-1}}{(4x_1w_1 t+|x-w|^2)^{\frac{N}{2}}}\ dt=\frac{4^sk_{N,s}}{s}\frac{x_1^s}{|x-z|^{N}},
	\end{align*}
	where {$k_{N,s}:=\frac{\Gamma(\frac{N}{2})}{\pi^\frac{N}{2}4^s\Gamma(s)^2}$} and therefore
	\[
	\frac{4^sk_{N,s}}{s}=\frac{\Gamma(\frac{N}{2})}{\pi^{\frac{N}{2}}s\Gamma(s)^2}=\frac{\alpha_{0,0}}{\Gamma(s+1)\Gamma(s)}.
	\]
	For \eqref{second} let $s>1$, denote $\bar{x}=\overline{(x_1,x')}=(-x_1,x')$ for $x=(x_1,x')\in \R^N$, and let $z\in \partial\Rp$. Observe that $4(s-1)^2k_{N,s}=k_{N,s-1}$ and, by \eqref{leibniz},
	\[
	D^{m}_{z}(z_1^{m-1}\frac{x_1+z_1}{|z-\bar{x}|^N})
	=\sum_{k=0}^{m}D^{k}_{z}z_1^{m-1}D^{m-k}_{z}\Big(\frac{x_1+z_1}{|z-\bar{x}|^N}\Big)
	=D^{1}_{z}\Big(\frac{x_1+z_1}{|z-\bar{x}|^N}\Big)
	=|z-\bar{x}|^{-N}-N
	\frac{x_1^2}{|z-\bar{x}|^{N+2}}.
	\]
	As in the case of the ball (see \cite{AJS16b}) a direct calculation shows that the following recurrence formula holds:
	\begin{align*}
		-\Delta_x\ \cG_{s}(x,y)=\cG_{s-1}(x,y)-k_{N,s}4^s(s-1)P_{s-1}(x,y)\qquad \text{ for } x,y\in \Rp, \ x\neq y,
	\end{align*}
	where
	\begin{align}\label{Psm}
		P_{s-1}(x,y)&:=\frac{(x_1)_+^{s-2}(y_1)_+^{s-1}(x_1+y_1)}{|x-\bar{y}|^{N}}\qquad \text{ for }\quad x,y\in\R^N,\ x\neq y,
	\end{align}
	$k_{N,s}$ is as in \eqref{constants}, and $\bar{y}=(-y_1,y')$ for $(y_1,y')\in \R^N$. But then, by \eqref{first},
	\begin{align*}
		D^{s-1}_{z}(-\Delta)_zG_s(x,z)&=D^{s-1}_{z}G_{s-1}(x,z)-4^sk_{N,s}(s-1)D^{s-1}_{z}P_{s-1}(x,z)\\
		&=\frac{4^{s-1}k_{N,s-1}}{s-1}\frac{x_1^{s-1}}{|x-z|^{N}}-4^{s-1}\frac{k_{N,s-1}}{s-1}x_1^{s-1}D^{s-1}_{z}\Big(z_1^{s-2}\frac{x_1+z_1}{|z-\bar{x}|^N}\Big)\\
		&=\frac{4^{s-1}k_{N,s-1}}{s-1}\frac{x_1^{s-1}}{|x-z|^{N}}\Big(1-|x-z|^N D^{m}_{z}\Big(z_1^{m-1}\frac{x_1+z_1}{|z-\bar{x}|^N}\Big)\Big)\\
		&=\frac{4^{s-1}k_{N,s-1}}{s-1}\frac{x_1^{s-1}}{|x-z|^{N}} N\frac{x_1^2|x-z|^N}{|z-\bar{x}|^{N+2}}
		=\frac{N4^{s-1}k_{N,s-1}}{(s-1)}\frac{x_1^{s+1}}{|x-z|^{N+2}},
	\end{align*}
	where we used that $|x-\bar{z}|=|x-z|$ for $z\in \partial \Rp$, $z_1=0$. But then equation \eqref{second} follows, since
	\[
	\Gamma(s)\Gamma(s-1)N\frac{4^{s-1}k_{N,s-1}}{s-1}=N\frac{\Gamma(\frac{N}{2})}{\pi^{\frac{N}{2}}}= {\frac{2N}{\omega_N} = \alpha_{1,0}}.
	\]
	On the other hand, for any $z\in\partial\Rp$ we can also compute
	\begin{align*}
		D^{s+1}_zG_s(x,z)=\lim_{y\to z}\frac{\partial}{\partial y_1}[y_1^{-s}\cG_s(x,y)]
		=4^sk_{N,s}x_1^s\lim_{y\to z}\frac{\partial}{\partial y_1}\left[\frac1{{|x-y|}^{N}}\int_0^1\frac{v^{s-1}}{{v+1}^{\frac{N}2}}\;dv\right],
	\end{align*}
	where we have used an equivalent expression for $\cG_s$ to the one in \eqref{greenhs0}.
	Let us first remark that $\frac{2}{\omega_N}=4^s\Gamma(s)^2 k_{N,s}$ (see \eqref{constants}).
	Then,
	\begin{align*}
		\lim_{y\to z}&\frac{\partial}{\partial y_1}\left[\frac1{{|x-y|}^{N}}\int_0^1\frac{v^{s-1}}{\left(\frac{4x_1y_1}{|x-y|^2} v+1\right)^{\frac{N}2}}\;dv\right]\\
		&= 	\lim_{y\to z}\frac{\partial}{\partial y_1}\left[\frac1{{|x-y|}^{N}}\right]\int_0^1\frac{v^{s-1}}{\left(\frac{4x_1y_1}{|x-y|^2} v+1\right)^{\frac{N}2}}\;dv 
		+\lim_{y\to z}\frac1{{|x-y|}^{N}}\frac{\partial}{\partial y_1}\left[\int_0^1\frac{v^{s-1}}{\left(\frac{4x_1y_1}{|x-y|^2} v+1\right)^{\frac{N}2}}\;dv\right]\\
		&=	\frac1s\lim_{y\to z}\frac{\partial}{\partial y_1}\frac1{{|x-y|}^{N}}
		-\frac{N}2\frac1{{|x-z|}^{N}}\left[\int_0^1 v^s\lim_{y\to z}\frac{\partial}{\partial y_1}\frac{4x_1y_1}{|x-y|^2} \;dv\right],
	\end{align*}
	where 
	\begin{align*}
		\frac{\partial}{\partial y_1}\frac{4x_1y_1}{|x-y|^2}
		=\frac{4x_1}{|x-y|^2}+\frac{8x_1y_1(x_1-y_1)}{|x-y|^4}
		\longrightarrow\frac{4x_1}{|x-z|^2}	
		\qquad \text{as }\ y\to z.
	\end{align*}
	Thus, we have
	\begin{align*}
		D^{s+1}_z\cG_s(x,z)&=\frac{2\;x_1^s}{s\Gamma(s)^2\omega_N}\lim_{y\to z}\frac{\partial}{\partial y_1}\frac1{{|x-y|}^{N}}
		-\frac{4N}{(s+1)\Gamma(s)^2\omega_N}\frac{x_1^{s+1}}{{|x-z|}^{N+2}} \\
		&=\frac{2N}{s\Gamma(s)^2\omega_N}\frac{x_1^{s+1}}{{|x-y|}^{N+2}}
		-\frac{4N}{(s+1)\Gamma(s)^2\omega_N}\frac{x_1^{s+1}}{{|x-z|}^{N+2}} \\
		&=\left(\frac{1}{s\Gamma(s)^2}
		-\frac{2}{(s+1)\Gamma(s)^2}\right)\frac{2N}{\omega_N}\frac{x_1^{s+1}}{{|x-z|}^{N+2}}
		=\frac{1-s}{\Gamma(s)\Gamma(s+2)}E_{m-1,s}(x,z).
	\end{align*}
\end{proof}

\begin{remark}We close this paper with a series of remarks.
	\begin{enumerate}
		\item [$(i)$] If one defines the $s$-Martin kernel via
		\begin{equation*}
			M_s(x,z):=\lim_{\substack {w\to z,\\ w\in \Rp}} \frac{\cG_s(x,w)}{w_1^s}\quad\text{ for $x\in \Rp$ and  $z\in \partial \Rp$,}
		\end{equation*}
		where $\cG_s$ is as in \eqref{greenhs0}, then Lemma \ref{lem:edenhofer-rep} shows that $E_{m,s}$ is the Martin kernel for the half-space (up to a constant).
		\item[$(ii)$] Let $s>1$, $y\in \Rp$, and $P_{s-1}$ as in \eqref{Psm}, then 
		\[
		P_{s-1}(x,y)= \frac{\omega_N}{2}\int_{\partial \Rp} E_{m-1,s-1}(x,z)E_{m,s}(y,z)\ dz \qquad\text{ for $x\in \Rp$,}
		\]
		where $E_{k,s}$ are given in \eqref{edenhofer}. A similar relationship was observed in the ball, see \cite{AJS17a}.
		\item[$(iii)$] As in the ball case, the kernel $\Gamma_s$ satisfies a recurrence formula (see \cite[Proposition 3.1]{AJS17a}). Let $m\in\N$, $\sigma\in(0,1)$, $s=m+\sigma$, and $\Gamma_s$ as in \eqref{s-poisson}.Then
		\begin{equation*}
			\Gamma_s(x,y)=\Gamma_{s-1}(x,y)-\int_{\partial\Rp} E_{m,s}(x,z) D^{s-1}\Gamma_{s-1}(z,y)\ dz\qquad\text{ for } x\in \Rp,\ y\in \R^N\setminus \overline{\Rp}.
		\end{equation*}
		\item [$(iv)$] As a consequence of Theorem \ref{dist:sol:l} and Lemma \ref{lem:edenhofer-rep} one can deduce a higher-order Hopf Lemma for the homogeneous Dirichlet problem as in \cite[Theorem 5.7]{GGS10} and \cite[Corollary 1.9]{AJS17a} where the case of a ball is considered and the positivity of the trace $D^s u$ on the boundary of the domain can be obtained if $(-\Delta)^s u$ is positive.  Note however that in the half-space one requires additional growth assumptions at infinity for the solution (such as \eqref{bound}, \eqref{es}, or \eqref{es1}).  Without these assumptions, the function $x_1^s$ (which is $s$-harmonic by Proposition \ref{s-harmonic2}) would be a counterexample.
		\item[$(v)$] Similar arguments to those presented for the half-space can be used to deduce the kernels for the complement of the ball $B^c:=\{x\in\R^N\::\:|x|>1\}$ using the Kelvin transform ($K_s$ as in \eqref{kappa:K:def} with $c=1$ and $v=0$).  Here, similarly as in the ball case (see \cite{AJS17a}), for $k\in\N_0$ the \emph{boundary trace operator for $B^c$} is given by
		\begin{equation*}
			D_{B^c}^{k+\sigma-1}u(z):=\frac{(-1)^k}{k!}\lim_{x\to z}\frac{\partial^k}{\partial (|x|^2)^k}[{(|x|^2-1)}^{1-\sigma} u(x)]\qquad \text{ for }z\in\partial B.
		\end{equation*}
		The associated Green function and nonlocal Poisson kernels are 
		\begin{align*}
			\cG_{B^c}(x,y)&=k_{N,s}|x-y|^{2s-N}\int_0^{\frac{(|x|^2-1)_+(|y|^2-1)_+}{|x-y|^2}}\frac{v^{s-1}}{(v+1)^{\frac{N}{2}}}\ dv,\quad x,y\in \R^N,\ x\neq y,\\
			\Gamma^{B^c}_s(x,y)&=(-1)^m\gamma_{N,\sigma}\frac{(|x|^2-1)_+^s}{(1-|y|^2)^s|x-y|^N}\quad x\in\R^N,\ y\in B. 
		\end{align*}
		see also \cite[equation (8.6)]{BB00} and \cite[equation (1.6.11)]{L72} for the case $s\in(0,1)$.  Boundary Poisson kernels for $B^c$ can also be obtained via the Kelvin transform using the boundary Poisson kernels for the ball given in \cite{AJS17a}.
	\end{enumerate}
\end{remark}

\begin{remark}\label{ns:rem}
	For $m\in \N$, $s=m+1$, $k\in\{0,\ldots,m\}$, $x\in\Rp$, and $y\in\partial\Rp$ let
	\begin{align*}
		K_{j,m+1}(x,y):=\left\lbrace\begin{aligned}
			& \partial_{y_1} (-\Delta)_y^{m+1-(\frac{j}{2}+1)}\cG_{m+1}(x,y) & \text{ for $j$ even,}\\
			& (-\Delta)_y^{m+1-(\frac{j+1}{2})}\cG_{m+1}(x,y) & \ \text{ for $j$ odd.}\\
		\end{aligned}\right.
	\end{align*}
	Due to integration by parts (see, for example, \cite[Lemma 8]{RW09}), one could think that the Dirichlet boundary Poisson kernels satisfy the relationships $E_{j,m+1}(x,y) ={ c_{j,m}}K_{j,m+1}(x,y)$ for $x\in\Rp$, $y\in\partial\Rp$, $j\in\{0,\ldots,m\}$,
	and some constants $c_{j,m}\in\R$.  Although this equality holds true for $j\in\{m-1,m\}$ (see Lemma \ref{lem:edenhofer-rep}), it does \emph{not} hold in general for $k\leq m-2$. For instance, let $m=2$, $\sigma=1$, $s=3$, and $N=2$.  Direct calculations show that
	\begin{align*}
		K_{0,3}(x,y)=\partial_{y_1} (-\Delta)_y^{2}\cG_{3}(x,y)
		=\frac{8}{\pi}\frac{x_1^{5}}{|x-y|^{6}}-\frac{4}{\pi}\frac{x_1^3}{|x-y|^{4}}\qquad \text{ for $x\in{\R^2_+}$ and $y\in\partial\R^2_+$},
	\end{align*}
	whereas,
	\begin{align*}
		E_{0,3}(x,y)=\frac{4}{\pi}\frac{x_1^{5}}{|x-y|^{6}}-\frac{1}{\pi}\frac{x_1^3}{|x-y|^{4}}\qquad \text{ for $x\in{\R^2_+}$ and $y\in\partial{\R^2_+}$}.
	\end{align*}
	In fact, for $h\in C^\infty_c(\R)$ and arguing as in Theorem \ref{thm:edenhofer}, one can show that the function $u:\R^2_+\to\R$ given by $u(x)=\int_{\R}K_{0,3}(x,y)h(y)\ dy$ satisfies
	\begin{align*}
		u(0,x_2) = h(x_2),\qquad \partial_{x_1} u(0,x_2) = 0,\qquad \partial_{x_1x_1} u(0,x_2) = -h''(x_2)\qquad \text{ for all }x_2\in\R,
	\end{align*}
	that is, Dirichlet boundary conditions are not met. However, it does hold that
	\begin{align*}
		\Delta u(0,x_2) = 0 \qquad \text{ for all }x_2\in\R.
	\end{align*}
	
	Actually, a closer look at integration by parts formulas \cite[Lemma 8]{RW09} suggests that, for $j\in\{0,\ldots,m\}$, the function $u_j:\Rp\to\R$ given by $u_j(x)=\int_{\partial\Rp} K_{j,m+1}(x,y)h(y)\ dy$  is a solution of $(-\Delta)^{m+1} u_j=0$  in $\Rp$ and, for $k\in\{0,\ldots,m\}$,
	\begin{align*}
		(-\Delta)^{\frac{k}{2}}u_j=\delta_{j,k}h\quad \text{ for $k$ even},\qquad 
		\partial_{x_1}(-\Delta)^{\frac{k-1}{2}}u_j=\delta_{j,k}h\quad \text{ for $k$ odd}\qquad \text{ on }\partial\Rp.
	\end{align*}
	The relationship between $E_{k,s}$ and $\cG_s$ does not seem to be simple in general and therefore a generalization of our results to more general domains is not immediate.
\end{remark}

\paragraph{{\bf Acknowledgment}} We thank M. Kassmann for informing us about \cite{GKL18} and the issue mentioned in Remark \ref{remark:crucial}.


\begin{thebibliography}{10}
	
	\bibitem{nicola}
	N.~Abatangelo.
	\newblock Large {$s$}-harmonic functions and boundary blow-up solutions for the fractional {L}aplacian.
	\newblock {\em Discrete Contin. Dyn. Syst.}, 35(12):5555--5607, 2015.
	
	\bibitem{AJS16b}
	N.~Abatangelo, S.~Jarohs, and A.~Salda{\~n}a.
	\newblock Green function and {M}artin kernel for higher-order fractional {L}aplacians in balls.
	\newblock {\em Nonlinear Analysis}, 175:173--190, 2018.
	.
	
	\bibitem{AJS17b} N.~Abatangelo, S.~Jarohs, and A.~Salda{\~n}a.
	\newblock Positive powers of the {L}aplacian: {F}rom hypersingular integrals to boundary value problems.
	\newblock {\em Comm. Pure Appl. Anal.}, 17(3):899--922, 2018.
	
	\bibitem{AJS17a}
	N.~Abatangelo, S.~Jarohs, and A.~Salda{\~n}a.
	\newblock Integral representation of solutions to higher-order fractional
	{D}irichlet problems on balls.
	\newblock {\em Comm. Contemp. Math.}, to appear.
	
	\bibitem{AJS16a}
	N.~Abatangelo, S.~Jarohs, and A.~Salda{\~n}a.
	\newblock On the loss of maximum principles for higher-order fractional {L}aplacians.
	\newblock {\em Proc. Amer. Math. Soc.}, to appear.
	
	\bibitem{ADN59}
	S.~Agmon, A.~Douglis, and L.~Nirenberg.
	\newblock Estimates near the boundary for solutions of elliptic partial
	differential equations satisfying general boundary conditions. {I}.
	\newblock {\em Comm. Pure Appl. Math.}, 12:623--727, 1959.
	
	\bibitem{BMZ04}
	I.~Bachar, H.~M\^aagli, and M.~Zribi.
	\newblock Estimates on the {G}reen function and existence of positive solutions for some polyharmonic nonlinear equations in the	half space.
	\newblock {\em Manuscripta Math.}, 113(3):269--291, 2004.
	
	\bibitem{B99}
	K.~Bogdan.
	\newblock Representation of {$\alpha$}-harmonic functions in {L}ipschitz
	domains.
	\newblock {\em Hiroshima Math. J.}, 29(2):227--243, 1999.
	
	\bibitem{BB00}
	K.~Bogdan and T.~Byczkowski.
	\newblock Potential theory of {S}chr\"odinger operator based on fractional
	{L}aplacian.
	\newblock {\em Probab. Math. Statist.}, 20(2, Acta Univ. Wratislav. No. 2256):293--335, 2000.
	
	\bibitem{BZ06}
	K.~Bogdan and T.~{\.Z}ak.
	\newblock On {K}elvin transformation.
	\newblock {\em J. Theoret. Probab.}, 19(1):89--120, 2006.
	
	\bibitem{CS98}
	Z.-Q.~Chen and R.~Song.
	\newblock Estimates on {G}reen functions and {P}oisson kernels for symmetric stable processes.
	\newblock {\em Math. Ann.}, 312(3):465--501, 1998.
	
	\bibitem{DG2016}
	S.~Dipierro and H.-C. Grunau.
	\newblock Boggio's formula for fractional polyharmonic {D}irichlet problems.
	\newblock {\em Ann. Mat. Pura Appl. (4),} 196(4):1327--1344, 2017.
	
	\bibitem{E77}
	J.~Edenhofer.
	\newblock Eine {I}ntegraldarstellung polyharmonischer {F}unktionen in
	einem {H}albraum.
	\newblock {\em Z. Angew. Math. Mech.}, 57(5):T227--T229, 1977.
	
	\bibitem{EMOT81}
	A.~Erd\'elyi, W.~Magnus, F.~Oberhettinger, and F.G. Tricomi.
	\newblock {\em Higher transcendental functions. {V}ol. {I}}.
	\newblock Robert E. Krieger Publishing Co., Inc., Melbourne, Fla., 1981.
	
	\bibitem{FW12}
	M.M. Fall and T.~Weth.
	\newblock Nonexistence results for a class of fractional elliptic
	boundary value problems.
	\newblock {\em J. Funct. Anal.}, 263(8):2205--2227, 2012.
	
	\bibitem{FW16}
	M.M. Fall and T.~Weth.
	\newblock Monotonicity and nonexistence results for some fractional elliptic
	problems in the half-space.
	\newblock {\em Commun. Contemp. Math.}, 18(1):1550012, 25, 2016.
	
	\bibitem{GGS10}
	F.~Gazzola, H.-C. Grunau, and G.~Sweers.
	\newblock {\em Polyharmonic boundary value problems}, volume 1991 of {\em
		Lecture Notes in Mathematics}.
	\newblock Springer-Verlag, Berlin, 2010.
	
	\bibitem{G72}
	H.W.~Gould.
	\newblock {\em Combinatorial identities}.
	\newblock Henry W. Gould, Morgantown, W.Va., 1972.
	
	\bibitem{G15:2}
	G.~Grubb.
	\newblock Fractional {L}aplacians on domains, a development of {H}\"ormander's
	theory of {$\mu$}-transmission pseudodifferential operators.
	\newblock {\em Adv. Math.}, 268:478--528, 2015.
	
	\bibitem{G2018}
	G.~Grubb.
	\newblock {Green's formula and Dirichlet-to-Neumann operator for fractional
		order pseudodifferential operators}.
	\newblock {\em Comm. Partial Differential Equations}, to appear.
	
	\bibitem{GKL18}
	T.~Grzywny, M.~Kassmann, and L.~Le{$\dot{\text{z}}$}aj.
	\newblock Remarks on the nonlocal {D}irichlet problem.
	\newblock preprint available on \url{https://arxiv.org/abs/1807.03676v1}, 2018.
	
	\bibitem{L72}
	N.S. Landkof.
	\newblock {\em Foundations of Modern Potential Theory}.
	\newblock Springer-Verlag, Berlin Heidelberg New York, 1972.
	
	\bibitem{PSV13}
	G.~Palatucci, O.~Savin, and E.~Valdinoci.
	\newblock Local and global minimizers for a variational energy involving a fractional norm.
	\newblock {\em Ann. Mat. Pura Appl. (4)}, 192(4):673--718, 2013.
	
	\bibitem{RW09}
	W.~Reichel and T.~Weth.
	\newblock A priori bounds and a {L}iouville theorem on a half-space for higher-order elliptic {D}irichlet problems.
	\newblock {\em Math. Z.}, 261(4):805--827, 2009.
	
	\bibitem{RS12}
	X.~Ros-Oton and J.~Serra.
	\newblock The {D}irichlet problem for the fractional {L}aplacian: regularity up
	to the boundary.
	\newblock {\em J. Math. Pures Appl. (9)}, 101(3):275--302, 2014.
	
	\bibitem{RS15}
	X.~Ros-Oton and J.~Serra.
	\newblock Local integration by parts and {P}ohozaev identities for higher order
	fractional {L}aplacians.
	\newblock {\em Discrete Contin. Dyn. Syst.}, 35(5):2131--2150, 2015.
	
	\bibitem{S07}
	L.~Silvestre.
	\newblock Regularity of the obstacle problem for a fractional power of the
	{L}aplace operator.
	\newblock {\em Comm. Pure Appl. Math.}, 60(1):67--112, 2007.
	
	\bibitem{ZDQ16}
	Y.H.~Zhang, G.T.~Deng, and T.~Qian.
	\newblock Integral representations of a class of harmonic functions in the half space.
	\newblock {\em J. Differential Equations}, 260(2):923--936, 2016.
	
\end{thebibliography}
\end{document}